\numberwithin{equation}{section}
\newcommand{\field}[1]{\mathbb{#1}}
\newcommand{\R}{\field{R}}
\newcommand{\N}{\field{N}}
\def\D\theta ij#1{\dis \frac{\partial #1}{\partial \theta_i^j}}
\def\sqr{{\hskip1pt\vcenter{\vbox{\hrule height.4pt
\hbox{\vrule width.4pt height4pt\kern4pt
\vrule width.4pt}
\hrule height.4pt}}}}
\def\cal{\mathcal}
\def\trans{\ifmmode{\rm \frown\mkern-16.8mu \vert
\mkern8mu}\else{$\frown\mkern-16.8mu\vert\mkern8mu$}\fi\relax}
\def\ap{\rightarrow}
\def\dis{\displaystyle}
\def\D{\Delta}
\def\dis{\displaystyle}
\def\a{\alpha}
\def\g{\gamma}
\def\G{\Gamma}
\def\t{\tau}
\def\d{\delta}
\def\l{\lambda}
\def\L{\hat{\epsilon}}
\def\n{\nu}
\def\s{\sigma}
\def\S{\Sigma}
\def\p{\partial}
\newtheorem{The1}{Theorem}
\newtheorem{The}{Theorem}[subsection]
\newtheorem{Pro}{Proposition}[subsection]
\newtheorem{Def}{Definition}[subsection]
\newtheorem{Cor}{Corollary}[section]
\newtheorem{Lem}{Lemma}[subsection]
\newtheorem{Ex}{Example}[subsection]
\newtheorem{Exs}{Examples}[subsection]
\newtheorem{Rem}{Remark}[subsection]
\newtheorem{Obs}{Observations}[subsection]
\begin{document}
\title{On Finsler  entropy of smooth distributions and Stefan-Sussman foliations  }
\author{ F. Pelletier}
\date{}
\maketitle

\begin{abstract}
Using the definition of   entropy of  a family of increasing distances  on a compact metric set given in \cite{DNS} we introduce a notion of Finsler entropy for smooth distributions and Stefan-Sussmann  foliations. This concept generalizes most of classical  topological entropy on a compact Riemannian manifold :
the entropy of a flow (\cite{Di}),  of a regular foliation (\cite{GLW}), of  a regular distribution (\cite{Bis}) and  of a geometrical structure (\cite{Zu}). The essential results of this paper is   the nullity of  the Finsler  entropy  for a controllable  distribution and  for a singular Riemannian foliation.
\end{abstract}
\section{Introduction and Results}
A notion of {\it geometric entropy for regular foliations}   in  compact Riemannian manifolds was introduced by Ghys, Langevin, and Walczak (\cite{GLW}). The basic idea is to try to measure  the transversal complexity of the the  leaves.  These authors shows in particular that when this geometric entropy vanishes, the foliation admits a transverse measure. The reader can find in \cite{Hu} a survey of the relation between the nullity or not nullity of this entropy and some geometrical properties  of the foliations. 
A notion of {\it entropy of a regular distribution} was proposed by Bi\'s in \cite{Bis} by comparing the distance of sets of curves tangent to the distribution and which start from two  points of a fixed "transversal" to this distribution. In particular, he proves that if the distribution is integrable we recover the previous geometrical entropy of the foliation. More recently,  Zung uses  an analog concept in \cite{Zu} to define the {\it  entropy of a   geometrical structure}  that is the data of a vector bundle $A\ap M$ on $M$, a morphism $\sharp: A\ap TM$ and a collection of norms on each fiber of $A$. \\
Now   in the context  of laminations by hyperbolic Riemann surfaces,  Dinh, Nguyen and  Sibony introduce in \cite{DNS} a notion of topological entropy for a family of increasing distances  on a compact metric set. It is precisely this last approach that  we use  to define the {\it Finsler entropy} for smooth distributions and Stefan-Sussmann foliations. More precisely as in the framework of control theory we consider a {\it set of admissible  curves} in a compact metrics space $(X,d)$. Such  a set ${\cal A}$ is a set  of continuous  curves which is stable by  reparametrizations, concatenations and restrictions and contains constant curves. Then to any  filtration $\{{\cal A}_r\}_{r\in \R^+}$ of $\cal A$ which is increasing with $r$, we can associate  of  set $\{d_r\}_{r\in \R^+}$ of distances on $X$. In fact such a   distance $d_r$ can be seen  as the Hausdorff distance of  sets curves in ${\cal A}_r$ which start from a  given point in $X$ and which are parametrized on $[0,1]$ (see Remark \ref{Hausdorff}). In this way  $\{d_r\}_{r\in \R^+}$ is a set of  increasing  distances with respect to $r$ (for more details see section \ref{adcurves}). If we choose the set of absolutely  continuous curves tangent to a smooth distribution  {\it a.e.}, we get a set of admissible curves in the previous sense. Now given a Finsler metric on $D$ ({\it cf.} Definition \ref{Finslerdistribution}), we can define the length of  curves tangent to $D$  {\it a.e.}. Then the subset    of admissible curves of length at most $r$ gives rise to a filtration and we can associate a family of distances as we have already seen. Finally we define the {Finsler entropy } of $D$ as the topological entropy corresponding to this family of distances on a compact Finsler manifold $(M,\Phi)$ considered as metric space for the distance $d_\Phi$ associated to $\Phi$. If $\cal F$ is a Stefan-Sussmann foliation, its Finsler entropy is the Finsler entropy of the  distribution defined by $\cal F$. The Finsler entropy is a natural generalization of the entropy of a regular distribution defined by Bi\'s and also the entropy of a geometric structure defined by Zung.\\
  In this paper we give some basic properties of the Finsler entropy of a smooth distribution. On the other hand, according to the famous result of accessibility of Sussmann \cite{Su}, to a smooth distribution $D$ is associated a  canonical Stefan-Sussmann foliation whose leaves are the accessibility sets for $D$ that is  the set of points which can be joined by tangent curves to $D$. In fact the Finsler entropy depends of  this foliation. In particular we have
  
  \begin{The1}\label{The1}${}$\\
 Let $D$ be a smooth distribution on a compact Finsler  manifold  $(M,\Phi)$ and any Finsler metric $F$ on $D$. If the distribution $D$ is controllable ({\it i.e.} any two points of $M$ can be joined by a curve tangent to $D$) then the Finsler entropy is zero.
 \end{The1}
In the continuation of the famous works of Molino  on Riemann foliations (see for instance \cite{Mo})  an important activity of research  is concerned by  {\it singular Riemannian foliations} (see \cite{ABT} and references inside this paper). It is well known that the geometrical entropy of a regular Riemannian foliation is zero. In our context we also have :

\begin{The1}\label{The2}${}$\\
Let $\cal F$ be a singular Riemannian foliation on a compact Riemannian manifold $(M,g)$. The Finsler entropy of $\cal F$  (relative to the induced Riemannian structure on each leaf)  is zero.\\
\end{The1}

This paper is organized as follow. In Section 2  we begin by recalling the context of the entropy of a family of  increasing distances exposed in \cite{DNS} and then we develop the notion of entropy for an admissible set of continuous curves in a compact metric set. We end this section by classical examples of entropy which are particular cases of the entropy of a family of increasing distances. The definitions  and results about Finsler entropy of  smooth distributions and Stefan-Sussmann foliations are contained in Section 3 and in particular the proof of Theorem \ref{The1}. For more concise and complete results about this topic the reader can consult Observations \ref{propentrop}. The last section essentially concerns the proof of Theorem \ref{The2}. We begin this section by some results about   Finsler entropy relative to isometric Finsler submersion and
to smooth maps which are isometric Finsler submersion between distributions. After recalling essential results about singular Riemannian  foliations we end by a proof of Theorem \ref{The2}.

\section{  On the entropy of a family of distances}
\subsection{ Entropy of a family of distances}\label{entropydr}${}$\\
We recall the general concept of entropy introduced in \cite{DNS}.\\
Let $(X,d)$ a compact metric space. Given a set $\Lambda=\N$ or $\Lambda=\R^+$, consider a family ${\cal M}=\{d_\l\}_{\l\in \L}$  of distances on $X$ such that $d_0=d$ and $d_\l$ is increasing with respect to $\l$. We will say that ${\cal M}=\{d_\l\}_{\l\in \Lambda}$ is {\bf an increasing distances on $X$}.
Given any $\epsilon >0$ we denote by $M(d_\l,\epsilon)$ the minimum number of balls of radius $\epsilon$ with respect to $d_\l$ needed to cover $X$. 

\begin{Def}\label{entropy}${}$\\
The entropy of $X$ with respect to $\cal M$ is 
$h({\cal M},X)=\dis\sup_{\epsilon>0}\limsup_{\l\ap\infty}\frac{1}{\l}\ln M(d_\l,\epsilon)$
\end{Def}

Since $d_\l$ is increasing with $\l$, $M(d_\l,\epsilon)$ is  increasing with respect to $\l$ and $\limsup_{\l\ap\infty}\frac{1}{\l}\ln M(d_\l,\epsilon)$ is increasing   when $\epsilon$ decreases. Therefore we have 
$$h({\cal M},X)=\dis\lim_{\epsilon\ap 0^+}\limsup_{\l\ap\infty}\frac{1}{\l}\ln M(d_\l,\epsilon)$$

We can also defined $h({\cal M},X)$ in the following way: a subset $A$ of $X$ is called $(d_\l,\epsilon)$-separated if any pair $(x,y)$ of distinct points of $A$ we have $d_\l(x,y)>\epsilon$. Let $N(d_\l,\epsilon)$ the maximal cardinal of a $(d_\l,\epsilon)$-separated set in $X$. Then we have the relation ({\it cf.} Proposition 3.1 \cite{DNS})
$$M(d_\l,\epsilon)\leq N(d_\l,\epsilon)\leq M(d_\l,\epsilon/2)$$
and we obtain
\begin{eqnarray}\label{entropyN}
h({\cal M},X)=\dis\lim_{\epsilon\ap 0^+}\limsup_{\l\ap\infty}\frac{1}{\l}\ln N(d_\l,\epsilon)
\end{eqnarray}

\begin{Rem}\label{compactX}${}$
\begin{enumerate}
\item In all this section, we have assume that $(X,d)$ is compact metric space. However more generally we can consider any metric space $(X,d)$ and any family  ${\cal M}=\{d_\l\}_{\l\in \Lambda}$  of distances on $X$ such that $d_0=d$ and $d_\l$ is increasing with respect to $\l$  then for any relatively compact subset $K$ of $X$ we get a family ${\cal M}_K$ of induced increasing  distances on $K$ and as previously way the entropy $h({\cal M}_K,K)$ can be  well defined. Note that from Definition \ref{entropy}, it follows that  if $\overline{K}$ is the closure of $K$  we have $h({\cal M}_K,K)=h({\cal M}_{\overline{K}},\overline{K})$. Now if 
 $K$ and $K'$ are two relatively compact subset of $X$  and then  we have ({\it cf.} \cite{AnSk})
$$h({\cal M}_{K\cup K'},K\cup K')=\sup\{h({\cal M}_K,K),h({\cal M}_{K'},K')\}.$$
In particular if $K\subset K'$ then $ h({\cal M}_K,K)\leq h({\cal M}_{K'},K')$.
\item Assume that we have another family ${\cal M}'=\{d'_\l\}_{\l\in \Lambda}$ of increasing distances  on $X$
 such that \\$d'_\l\geq C d_\l$ for some $C>0$ and for any $\l\in\Lambda$ then we have ({\it cf.} \cite{DNS})
 $$h({\cal M}',X)\geq h({\cal M},X)$$
 \end{enumerate}
  \end{Rem}
  
  
\begin{Pro}\label{hproduct}${}$\\
Let  ${\cal M}'=\{d'_\l\}_{\l\in \Lambda}$ (resp.  ${\cal M}''=\{d''_\l\}_{\l\in \Lambda}$)  of increasing distances on $X'$ (resp. $X''$)  such that $(X',d'_0)$ (resp. $(X'',d''_0)$) is a compact metric space.
\begin{enumerate}
\item  On $X=X'\times X''$ we consider the family of distance ${\cal M}=\{d_\l\}_{\l\in\Lambda}$ with $d_\l=\max\{d'_\l,d''_\l\}$. Then we have
$$h({\cal M},X)=h({\cal M}',X')+h({\cal M}'',X'')$$
\item let $f: X'\ap X''$ a surjective map  which is $K$-Lipschitzian from $(X',d'_\l)$ to $(X'', d''_\l)$ for all $\l$. Then we have $$ h({\cal M}',X')\geq h({\cal M}'',X'').$$
\end{enumerate}
\end{Pro}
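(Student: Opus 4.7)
My plan is to treat the two parts separately via sandwich inequalities on covering/separating numbers, then pass to entropy by applying $\limsup_\lambda$ and $\sup_\epsilon$.

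For part (1), the key combinatorial fact is that since $d_\lambda=\max\{d'_\lambda,d''_\lambda\}$, a $d_\lambda$-ball of radius $\epsilon$ in $X=X'\times X''$ is exactly the product of a $d'_\lambda$-ball of radius $\epsilon$ and a $d''_\lambda$-ball of radius $\epsilon$. This yields two combinatorial inequalities:
$$M(d_\lambda,\epsilon)\leq M(d'_\lambda,\epsilon)\, M(d''_\lambda,\epsilon),\qquad N(d'_\lambda,\epsilon)\, N(d''_\lambda,\epsilon)\leq N(d_\lambda,\epsilon),$$
the first by taking products of minimal covers, the second because products of separated sets are separated. Taking $\log$, dividing by $\lambda$, applying $\limsup_\lambda$ to the first using the standard $\limsup(a_\lambda+b_\lambda)\leq \limsup a_\lambda+\limsup b_\lambda$, and then $\sup_\epsilon$, gives $h(\mathcal M,X)\leq h(\mathcal M',X')+h(\mathcal M'',X'')$.

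The reverse inequality is the main obstacle, because the dual $\limsup(a_\lambda+b_\lambda)\geq \limsup a_\lambda+\limsup b_\lambda$ is \emph{false} in general (even for nondecreasing $\lambda a_\lambda,\lambda b_\lambda$, out-of-phase oscillations can separate the two sides). To get around this I would fix $\epsilon>0$, pick $\lambda_k\to\infty$ realizing $H'(\epsilon):=\limsup_\lambda \lambda^{-1}\log N(d'_\lambda,\epsilon)$, then exploit the monotonicity of $\lambda\mapsto\log N(d''_\lambda,\epsilon)$ (from $d''_\lambda$ increasing) to thin the subsequence further so that $\lambda_k^{-1}\log N(d''_{\lambda_k},\epsilon)$ also stays close to $H''(\epsilon)$; the separated-set inequality along the common subsequence yields $\limsup_\lambda \lambda^{-1}\log N(d_\lambda,\epsilon)\geq H'(\epsilon)+H''(\epsilon)$, and $\sup_\epsilon$ concludes. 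This delicate extraction is the crux.

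For part (2), the argument is short. Given a $(d''_\lambda,\epsilon)$-separated set $A''\subset X''$, surjectivity of $f$ furnishes a lift $A'\subset X'$ with $f|_{A'}\colon A'\to A''$ bijective. The $K$-Lipschitz hypothesis $d''_\lambda(f(x),f(y))\leq K\,d'_\lambda(x,y)$ forces distinct $x,y\in A'$ to satisfy $d'_\lambda(x,y)>\epsilon/K$, so $A'$ is $(d'_\lambda,\epsilon/K)$-separated and $N(d''_\lambda,\epsilon)\leq N(d'_\lambda,\epsilon/K)$. Passing to $\limsup_\lambda$ and then $\sup_\epsilon$, using that $\epsilon\mapsto\epsilon/K$ is a self-bijection of $\mathbb R^+$, gives $h(\mathcal M'',X'')\leq h(\mathcal M',X')$. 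The principal technical hurdle is thus the $\limsup$-of-sum versus sum-of-$\limsup$ gap in the lower bound of (1); the upper bound in (1) and all of (2) are routine sandwich computations.
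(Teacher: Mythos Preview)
Your overall strategy --- sandwich inequalities on covering/separated numbers, then pass to the limit --- is exactly the paper's. For part~(2) the paper works with covering numbers (showing $M(d'_\lambda,\epsilon)\geq M(d''_\lambda,K\epsilon)$) rather than separated sets, but the two routes are equivalent and yours is correct. For the upper bound in part~(1) you and the paper coincide.

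For the lower bound in part~(1), you are right to flag the $\limsup$-of-sum obstruction: the paper does not address it, asserting only that the conclusion is ``a direct consequence of the properties of $\ln$ and these two inequalities.'' However, your proposed fix has its own gap. Monotonicity of $\lambda\mapsto \log N(d''_\lambda,\epsilon)$ does \emph{not} allow you to thin an arbitrary subsequence $\lambda_k\to\infty$ (chosen to realize $H'(\epsilon)$) so that $\lambda_k^{-1}\log N(d''_{\lambda_k},\epsilon)$ approaches $H''(\epsilon)$. For instance, if $\log N(d''_\lambda,\epsilon)$ behaves like $\lambda\bigl(\tfrac32+\tfrac12\sin(\log\lambda)\bigr)$ --- which one checks is increasing in $\lambda$ --- then $\lambda^{-1}\log N(d''_\lambda,\epsilon)$ oscillates between $1$ and $2$; should the $\lambda_k$ realizing $H'(\epsilon)$ happen to sit at the troughs, no further thinning brings the ratio near $H''(\epsilon)=2$. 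So neither your argument nor the paper's actually closes this gap; you are simply more candid about its presence. In the classical setting of a single continuous map one bypasses the issue via the open-cover definition of entropy, where a genuine limit exists by subadditivity; for a bare increasing family $\{d_\lambda\}$ no such mechanism is available, and the equality in part~(1) may well require additional hypotheses.
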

\begin{proof}${}$\\ If we have a covering of $X'$  by $M'_\l$ balls of radius $\epsilon$ relative to $d'_\l$ and a covering of $X''$  by $M''_\l$ balls of radius $\epsilon$ relative to $d''_\l$ then by cartesian products of balls we get a covering of $X$ by $M'_\l\times M''_\l$ balls of radius $\epsilon$ of $X$. Therefore
$$M(d_\l,\epsilon)\leq  M(d'_\l,\epsilon)\times M(d''_\l,\epsilon)$$
 if $U'$ is a ball of radius $\epsilon/2$  in $X'$ relative to $d'_r$, then we need at least $M(d''_\l,\epsilon)$ balls of radius $\epsilon$ relative to $d''_\l$ to obtain a covering of $U'\times X''$ by the cartesian product of $U'$ with each one of these balls. Therefore we have
 $$M(d_\l,\epsilon)\geq M(d'_\l,\epsilon/2)\times M(d''_\l,\epsilon)$$
 The result of Point (1) is then a direct consequence of  the properties of $\ln$ and these two inequalities.\\
 For the proof of Point (2), we denote by $B'_\l(x',\epsilon)$ and  $B''_\l(x'',\epsilon)$  the open ball of center $x'$ and $x''$ of radius $\epsilon$ in $X'$ and $X''$ relatively to $d'_\l$ and $d''_\l$ respectively. From the assumption, we have
  $d''_\l(f(x'),f(y'))\leq K d'(x',y')$ and so
 $f(B'_\l (x',\epsilon))\subset B''_\l(x'',K\epsilon)$
 for any $x'$ in $f^{-1}(x'')$. Since  $f$ is surjective, we must have 
 $$M(d'_\l,\epsilon)\geq M(d''_\l,K\epsilon)$$
 which implies the announced result.\\
 \end{proof}

 We end by  recalling the following sufficient conditions  of finiteness of the entropy given in  \cite{DNS}

\begin{Pro}\label{hfinite}${}$\\
Let $(X,d)$ a compact metric space such that  that there exists  positive constants $A$  and $m$ such that for any $\epsilon >0$ small enough $X$ admits a covering by balls of radius $\epsilon$ of cardinal bounded by  $A(\epsilon)^{-m}$. Assume that  a family ${\cal M}=\{d_\l\}_{\l\in \Lambda}$ of increasing distances satisfies the following properties:

(1)  there exists  positive constants $A$  and $m$ such that for any $\epsilon >0$ small enough $X$ admits a covering by balls of radius $\epsilon$ of cardinal bounded by  $A(\epsilon)^{-m}$;

(2) $d_\l\leq e^{a\l+b} d$ for some constants $a,b\geq 0$.

\noindent Then,
the entropy $h({\cal M},X)$ is  bounded by ${m.a}$.
\end{Pro}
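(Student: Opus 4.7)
The plan is to directly estimate the covering number $M(d_\lambda,\epsilon)$ by pushing it back to the base distance $d$ via the hypothesis $d_\lambda \le e^{a\lambda + b}\,d$. The intuition is that if $d_\lambda$ is controlled from above by $d$, then $d_\lambda$-balls contain $d$-balls of a comparable (but shrunken) radius, so a fine-enough covering for $d$ automatically supplies a covering for $d_\lambda$.

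More precisely, first I would observe that the inequality $d_\lambda \le e^{a\lambda + b}\, d$ implies the inclusion
\begin{equation*}
B_d\!\left(x,\, \epsilon\, e^{-(a\lambda + b)}\right) \subset B_{d_\lambda}(x,\epsilon)
\end{equation*}
for every $x\in X$ and every $\epsilon>0$. Consequently, any covering of $X$ by $d$-balls of radius $\epsilon\, e^{-(a\lambda + b)}$ yields a covering of $X$ by $d_\lambda$-balls of radius $\epsilon$ with the same cardinality, whence
\begin{equation*}
M(d_\lambda,\epsilon)\ \le\ M\!\left(d,\, \epsilon\, e^{-(a\lambda+b)}\right).
\end{equation*}

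Next I would plug in the polynomial covering hypothesis on $(X,d)$: for $\lambda$ large, $\epsilon\, e^{-(a\lambda+b)}$ is small, so
\begin{equation*}
M(d_\lambda,\epsilon)\ \le\ A\,\bigl(\epsilon\, e^{-(a\lambda+b)}\bigr)^{-m}\ =\ A\,\epsilon^{-m}\, e^{m(a\lambda+b)}.
\end{equation*}
Taking logarithms and dividing by $\lambda$ gives
\begin{equation*}
\frac{1}{\lambda}\ln M(d_\lambda,\epsilon)\ \le\ \frac{\ln A - m\ln\epsilon + mb}{\lambda}\ +\ m\,a,
\end{equation*}
and sending $\lambda\to\infty$ yields $\limsup_{\lambda\to\infty}\frac{1}{\lambda}\ln M(d_\lambda,\epsilon)\le m\,a$ for every fixed $\epsilon>0$.

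Finally, since this bound is uniform in $\epsilon$, taking the supremum over $\epsilon>0$ in Definition \ref{entropy} gives $h(\mathcal{M},X)\le m\,a$, as desired. There is no serious obstacle here: the argument is essentially a one-line estimate combining the metric domination hypothesis with the polynomial packing bound; the only point requiring minor care is to make sure that $\epsilon\, e^{-(a\lambda+b)}$ eventually enters the regime where the covering hypothesis on $(X,d)$ is applicable, which is automatic for $\lambda$ large since $\epsilon$ is fixed.
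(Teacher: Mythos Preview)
Your proof is correct. The paper does not give its own proof of this proposition---it is simply recalled from \cite{DNS} without argument---so there is nothing to compare against; your reasoning is the standard one and is precisely the computation underlying the result in \cite{DNS}.
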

\subsection{Entropy and admissible curves}\label{adcurves}${}$\\
Given a compact metric space $(X,d)$,
 let denote by ${\cal C}^0(M)$ the set of  continuous curves $\g:[a,b]\ap M$ where $[a,b]$ is any closed interval in $\R$.
 
 \begin{Def}\label{admissible}${}$\\
 a subset $\cal A$ of ${\cal C}^0(M)$ is called a set of admissible  curves if we have the following properties
 \begin{enumerate}
 \item[(i)] ${\cal A}$ contains any constant curve;
  \item[(ii)] if a curve $\g:[a,b]\ap M$ belongs to $\cal A$  then for any  monotonic  continuous map $\t:[c,d]\ap [a,b]$ 
   the curve $\g\circ \t:[c,d]\ap M$ also belongs to $\cal A$;
  \item[(iii)] if two curves $\g:[a,b]\ap M$ and $\g':[a',b']\ap M$ belongs to $\cal A$ and satisfies $\g(b)=\g'(a')$ then the concatenation $\g\star \g':[a, b+(b'-a')]\ap M$ defined by

  $(\g\star \g')(t)=\g(t) $ for $t\in[a,b]$  and $(\g\star \g')(b+s)=\g'(a'+s)$ for $s\in [0,b'-a']$
  
  belongs to $\cal A$;
    \item[(iv)] if a curve $\g:[a,b]\ap M$ belongs to $\cal A$, for any sub-interval $I\subset[a,b]$, the restriction of $\g$ to $I$ belongs to $\cal A$.
  \end{enumerate}
  \end{Def}
   Note that if $\g$ is an admissible curves of $\cal A$, there always exists  a new parametrization $\t:[0,1]\ap [a,b]$ such that $\hat{\g}=\g\circ \t$ is defined on $[0,1]$ and so $\hat{\g}$
    is admissible. Therefore without loss of generality we may  assume that {\bf an admissible curve $\g\in {\cal A}$ is defined on $[0,1]$.}
   
   \begin{Def}\label{filtration}${}$\\
   Given a set of admissible curves $\cal A$, a filtration of $\cal A$ is a family of subsets ${\cal A}_{r\in \R^+}$ of $\cal A$ such that 
    \begin{enumerate}
   \item[(i)] ${\cal A}_0$ is the set of constant curves.
    \item[(ii)] For $0<s< r$   then  ${\cal A}_0\subset{\cal A}_s\subset{\cal A}_r$ and
    if $\g\in{\cal A}_r$ is defined on $[0,1]$ then there exists a sub-interval $[0,T]\subset[0,1]$ and a curve $\mu$ in ${\cal A}_s$ defined on $[0,1]$ such that $\mu([0,1])=\g([0,T])$
    \item[(iii)] ${\cal A}=\dis\bigcup_{r\in \R^+}{\cal A}_r$
 \end{enumerate}
 \end{Def}
 Fix some filtration ${\cal A}_{r\in \R^+}$ of a set $\cal A$ of admissible curves. For any $x\in M$ we put 
 $${\cal A}_r(x)=\{\g\in {\cal A}_r: \g : [0,1]\ap M, \;\g(0)=x\}.$$ Now given
 two points $x$ and $y$ of $M$ we set:
 \begin{eqnarray}\label{deltar}
 \d_r(x,y)=\dis\sup_{\g\in{\cal A}_r(x)}\inf_{\mu\in {\cal A}_r(y)}\sup_{t\in [0,1]} d(\g(t),\mu(t))
\end{eqnarray}

\begin{eqnarray}\label{dr}
d_r(x,y)=\d_r(x,y)+\d_r(y,x)
    \end{eqnarray}
\begin{Rem} \label{interpretationdelta}${}$\\
Denote by ${\cal C}^0([0,1], M)$ the set of continuous curve defined on $[0,1]$ in $M$. Then 
$$\bar{d}(\g,\g')=\dis\sup_{t\in [0,1]}d(\g(t),\g'(t))$$
 is a distance on ${\cal C}^0([0,1], M)$  and since $M$ is compact,  $({\cal C}^0([0,1], M),d)$ is a complet metric space. Now $\dis\inf_{\mu\in {\cal A}_r(y)}\sup_{t\in [0,1]} d(\g(t),\mu(t))$ is nothing but else $\bar{d}(\g,{\cal A}_r(y))$ and finally  $\d_r(x,y)=\dis\sup_{\g\in {\cal A}_r(x)} \bar{d}(\g,{\cal A}_r(y))$. \\
   Therefore , for any integer $n>0$ there exists $g_n\in ,{\cal A}_r(x)$ such that 
 \begin{eqnarray}\label{approximinf}
\d_r(x,y)-1/n\leq \bar{d}(\g_n,{\cal A}_r(y))\leq \d_r(x,y)
\end{eqnarray}
 and for any integer $p>0$ there exists $\mu_p\in {\cal A}_r(y)$ such that  
 $$\bar{d}(\g_n,{\cal A}_r(y))\leq \bar{d}(\g_n,\mu_p)\leq \bar{d}(\g_n,{\cal A}_r(y))+1/p$$

It follows that   for any integers $n>0$, there exist $\g_n\in {\cal A}_r(x)$ and $\mu_n\in{\cal A}_r(y)$ such that
\begin{eqnarray}\label{approxim}
\d_r(x,y)-1/n\leq \bar{d}(\g_n,\mu_n)\leq \d_r(x,y)+1/n.
\end{eqnarray}
\end{Rem}

\begin{Rem}\label{Hausdorff}${}$\\
Since $X$ is compact, each set ${\cal A}_r(x)$ is bounded and we denote by $\overline{{\cal A}_r(x)}$ the closure of ${\cal A}_r(x)$ in the metric space $(({\cal C}^0([0,1],X), \bar{d})$
and  we have  
$$\d_r(x,y)=\dis\sup_{\g\in {\cal A}_r(x)} \bar{d}(\g,{\cal A}_r(y))=\dis\sup_{\g\in \overline{\cal A}_r(x)} \bar{d}(\g,\overline{{\cal A}_r(y)})$$
 The Hausdorff distance $\d^H$  between $\overline{{\cal A}_r(x)}$ and $\overline{{\cal A}_r(y)}$ is well defined  and so is given by
 $$ \d^H(\overline{{\cal A}_r(x)} ,\overline{{\cal A}_r(y)})=\max\{\d_r(x,y), \d_r(y,x)\}$$
  Therefore we get 
  $$ \d^H(\overline{{\cal A}_r(x)} ,\overline{{\cal A}_r(y)})\leq d_r(x,y)\leq 2  \d^H(\overline{{\cal A}_r(x)} ,\overline{{\cal A}_r(y)}).$$\\
 \end{Rem}
  \bigskip
Now the family  $\{d_r\}$ has the following properties:\\
\begin{Pro}\label{popdr}${}$\\
The map $d_r:M\times M\ap \R$ is a distance on $M$ and for all $r\geq s>0$ we have $d_r\geq d_s\geq 2d$
\end{Pro}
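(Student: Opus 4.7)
The proof splits into checking the metric axioms for $d_r$ and establishing the monotonicity in $r$. For symmetry, the form $d_r(x,y) = \d_r(x,y) + \d_r(y,x)$ makes $d_r(x,y) = d_r(y,x)$ immediate. For non-negativity, separation, and the lower bound $d_r \geq 2d$, the key observation is that the constant curve $c_x \equiv x$ belongs to ${\cal A}_0(x) \subset {\cal A}_r(x)$; substituting it into the definition of $\d_r$ gives $\sup_{t \in [0,1]} d(c_x(t), \mu(t)) \geq d(x, \mu(0)) = d(x,y)$ for every $\mu \in {\cal A}_r(y)$, hence $\d_r(x,y) \geq d(x,y)$. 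The swapped inequality yields $\d_r(y,x) \geq d(x,y)$, so $d_r(x,y) \geq 2 d(x,y)$, which simultaneously supplies positivity, separation, and the bound $d_s \geq 2d$ for every $s \geq 0$.

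The triangle inequality then follows by an $\e$-approximation argument of the kind recorded in Remark \ref{interpretationdelta}. Given $\g \in {\cal A}_r(x)$ and $\e > 0$, I would pick $\nu \in {\cal A}_r(z)$ with $\bar{d}(\g,\nu) \leq \d_r(x,z) + \e$ and then $\mu \in {\cal A}_r(y)$ with $\bar{d}(\nu,\mu) \leq \d_r(z,y) + \e$; the pointwise triangle inequality for $d$ gives $\bar{d}(\g,\mu) \leq \d_r(x,z) + \d_r(z,y) + 2\e$, and taking $\inf_\mu$, $\sup_\g$, then $\e \to 0$ yields $\d_r(x,y) \leq \d_r(x,z) + \d_r(z,y)$. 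Adding the symmetric version produces the triangle inequality for $d_r$.

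The delicate step is the monotonicity $d_r \geq d_s$ for $r \geq s > 0$: enlarging ${\cal A}_s$ to ${\cal A}_r$ widens both the outer $\sup$ in the definition of $\d$ (which tends to raise it) and the inner $\inf$ (which tends to lower it), and these two effects must be resolved in the right direction. The filtration property (ii) of Definition \ref{filtration} is the structural hypothesis intended for exactly this purpose: to each $\g \in {\cal A}_r(x)$ it associates a monotonic reparametrization $\tau:[0,1] \to [0,T_\g]$ and an initial segment $\hat\g = \g \circ \tau \in {\cal A}_s(x)$, and this correspondence surjects onto ${\cal A}_s(x)$ (take $\g = \hat\g$ with $T_\g = 1$). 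My plan is to pair each $\g \in {\cal A}_r(x)$ with the corresponding $\hat\g \in {\cal A}_s(x)$ and, via the analogous construction applied to the $y$-side, to transport any competitor $\mu \in {\cal A}_r(y)$ to a curve $\hat\mu \in {\cal A}_s(y)$ by reparametrization/truncation, relying on the closure properties (ii)--(iv) of Definition \ref{admissible} to remain inside the admissible class. The main obstacle I foresee is to carry out the correspondence $(\g,\mu) \mapsto (\hat\g,\hat\mu)$ so that $\sup_t d(\hat\g(t), \hat\mu(t))$ is controlled by $\sup_t d(\g(t), \mu(t))$ in such a way that the two-sided enlargement ${\cal A}_s \to {\cal A}_r$ genuinely forces $\d_r(x,y) \geq \d_s(x,y)$ rather than a weaker one-sided comparison; once that inequality is in hand, adding the $y \leftrightarrow x$ version gives $d_r \geq d_s$.
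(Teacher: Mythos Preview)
Your treatment of symmetry, the lower bound $d_r\geq 2d$, and the triangle inequality is correct and essentially parallels the paper's argument (the paper gets $\d_r(x,y)\geq d(x,y)$ even more directly by noting $\bar d(\g,\mu)\geq d(\g(0),\mu(0))=d(x,y)$ for \emph{every} pair $\g\in{\cal A}_r(x)$, $\mu\in{\cal A}_r(y)$, without singling out the constant curve; your triangle-inequality chain is in fact cleaner than the paper's).

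Where your proposal remains incomplete is the monotonicity $\d_r\geq\d_s$, and your plan is more convoluted than what the paper does. You propose to truncate on \emph{both} sides, pairing each $\g\in{\cal A}_r(x)$ with some $\hat\g\in{\cal A}_s(x)$ and each $\mu\in{\cal A}_r(y)$ with some $\hat\mu\in{\cal A}_s(y)$, and then to compare $\bar d(\hat\g,\hat\mu)$ with $\bar d(\g,\mu)$. That comparison is precisely the obstacle you flag, and it is genuinely hard in this form. The paper sidesteps it by truncating on one side only: since ${\cal A}_s(x)\subset{\cal A}_r(x)$, one starts directly with a near-maximizer $\g_n\in{\cal A}_s(x)$ of $\bar d(\,\cdot\,,{\cal A}_s(y))$ and keeps it fixed (no truncation on the $x$-side is needed). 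One then picks a near-minimizer $\mu_p\in{\cal A}_r(y)$ of $\bar d(\g_n,\,\cdot\,)$ and applies the filtration property~(ii) only to $\mu_p$, producing $\hat\mu_p(t)=\mu_p(c_p t)\in{\cal A}_s(y)$. The key inequality the paper invokes is then
\[
\bar d(\g_n,\hat\mu_p)\ \leq\ \bar d(\g_n,\mu_p)\ \leq\ \d_r(x,y)+\tfrac1p,
\]
while $\bar d(\g_n,\hat\mu_p)\geq \bar d(\g_n,{\cal A}_s(y))\geq \d_s(x,y)-\tfrac1n$, giving $\d_s(x,y)\leq\d_r(x,y)$. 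So the missing idea in your sketch is that the inclusion ${\cal A}_s\subset{\cal A}_r$ already handles the outer $\sup$, and the filtration property is needed only to tame the inner $\inf$ on the $y$-side; attempting a symmetric correspondence $(\g,\mu)\mapsto(\hat\g,\hat\mu)$ makes the comparison harder than necessary.
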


\bigskip
\begin{proof} (compare with the proof of Proposition 2.1 in \cite{Zu})
 ${}$\\
We adopt the notations of Remark  \ref{interpretationdelta}. At first note that for any $\g\in {\cal A}_r(x)$ and $\mu\in {\cal A}_r(y)$ we have  $\bar{d}(\g,\mu)\geq d(\g(0),\mu(0))=d(x,y)$. Therefore $\d_r(x,y)\geq d(x,y)$ and so $d_r\geq 2d$. It follows that $d_r(x,y)=0$ if and only if $x=y$. By construction $d_r$ is symmetric. For the triangular inequality choose three points $x$, $y$, $z$ in $M$. 
According to (\ref{approximinf}) for any $n>0$ there exists $\g_n\in {\cal A}_r(x)$ and $\mu_n\in {\cal A}_r(y)$ such that 
$$\d_r(x,y)-1/n\leq \bar{d}(\g_n,\mu_n)\leq \d_r(x,y).$$
 Now consider any $\nu\in{\cal A}_r(z)$. Since $\bar{d}$ is a distance, we have
$$\d_r(x,y)-1/n\leq \bar{d}(\g_n,\mu_n)\leq \bar{d}(\g_n,\nu)+\bar{d}(\nu,\mu_n)$$
for all 
 $\nu\in{\cal A}_r(z)$. Therefore for any integer $n>0$ we get:
$$\d_r(x,y)-1/n\leq \bar{d}(\g_n,{\cal A}_r(z))+\bar{d}(\mu_n,{\cal A}_r(z))\leq \d_r(x,z)+\d_r(y,z).$$
This implies the triangular inequality for $d_r$.\\
It remains to show that $d_r\geq d_s$ when  $r\geq s>0$. It is sufficient to prove  $\d_r\geq \d_s$ when  $r\geq s>0$
 According to (\ref{approximinf}), for any integer $n>0$ let $\g_n\in{\cal A}_s(x)$
 such that 
$$\d_s(x,y)-1/n\leq \bar{d}(\g_n,{\cal A}_s(y))\leq\d_s(x,y).$$
As in Remark \ref{interpretationdelta},  for any integer  $p>0$, there exists $\mu_p\in {\cal A}_r(y)$ such that
\begin{eqnarray}
\bar{d}(\g_n,{\cal A}_r(y))\leq \bar{d}(\g_n,\mu_p)\leq \bar{d}(\g_n,{\cal A}_r(y))+1/p.
\end{eqnarray}
From Property (ii) of Definition \ref{filtration}, there exists a sub-interval $[0,c_p]\subset[0,1]$ such that $\hat{\mu}_p(t)=\mu_p(tc_p)$ belongs to ${\cal A}_s(y)$.  Therefore we have
\begin{eqnarray}\label{1dr}
\bar{d}(\g_n,\hat{\mu}_p)\leq \bar{d}(\g_n,{\mu}_p)\leq \bar{d}(\g_n,{\cal A}_r(y))+1/p\leq \d_r(x,y)+1/p.
\end{eqnarray}
But clearly we have
\begin{eqnarray}\label{2ds}
\d_s(x,y)-1/n\leq \bar{d}(\g_n,{\cal A}_s(y))\leq \bar{d}(\g_n,\hat{\mu}_p).
\end{eqnarray}
Finally from (\ref{1dr}) and (\ref{2ds}) we obtain
$$\d_s(x,y)-1/n\leq\d_r(x,y)+1/p$$
for any integer $n>0$ and $p>0$ which ends the proof.\\
\end{proof}




According to Proposition \ref{popdr}, the family of distance $\{d_r\}_{r\in \R^+}$ associated to a filtration $\{{\cal A}_r\}_{r\in \R^+}$ of a set $\cal A$ of admissible curves  is increasing with $r$. Therefore from Definition \ref{entropy} we have:

\begin{Def}\label{entropy A}${}$\\
The entropy  $h({\cal A},X,d)$ of a set $\cal A$ of admissible curves of $X$ provided with  the family of distances $\{d_r\}$ associated to a filtration ${\cal A}_r$ is the entropy $h(\{d_r\},X)$.
\end{Def}

\subsection{Examples of entropy which is defined by a family of distances}${}$\\
We now present classical situations of entropy which can be defined  as in Definition \ref{entropy} or Definition \ref{entropy A}.

\subsubsection{Topological entropy of a continuous map}\label{topmap}${}$\\
Let  $f:X\ap X$ be a continuous map on a compact metric space $(X,d)$. For $n\in \N$, set $f^0: =Id$ and for $n>0$ $f^n:= f\circ f^{n-1}$ and denote by 
$$d_n(x,y)=\dis\sup_{0\leq i\leq n}d(f^i(x), f^i(y)$$
Clearly, ${\cal M}=\{d_n\}_{n\in \N}$ is an increasing family of distances on $X$ and therefore we can define the entropy $h({\cal M},X)$. This is exactly the canonical topological entropy of $f$ defined  for instance in \cite{Bo}.
Assume that  $(M,g)$ is a  compact  Riemannian manifold,  and $f$ is $k$-Lipchitzian. Then for the Riemannian distance $d_g$ the assumption (1) of Proposition \ref{hfinite} is satisfied ( {\it cf.} \cite{Wa}) and the assumption (2) is satisfied  for   $a=\ln k$ and $b=0$. Therefore topological entropy of $f$ is finite.

\subsubsection{Topological entropy of a pseudo group}${}$\\
Let $\G$ a pseudo group of Local homeomorphisms of a compact metric space $(X,d)$. Assume that $\G$ is generated by a finite set $\G_0$ of $\G$ such that $\G_0$ contains the identity and if $g$ belongs to $\G_0$ then $g^{-1}$ also belongs to $\G_0$. We denote by $\G_n$  the set of all well defined  composition $g_1\circ\cdots\circ g_i$ for $0\leq i\leq n$ of elements $g_1,\dots,g_i$ in $\G_0$. We can consider  distance:
$$d_n(x,y)=\dis\sup_{g\in \G_n} d(g(x),d(g(y))$$
We  obtain a family of increasing   distances ${\cal M}=\{d_n\}_{n\in \N}$ to which is associated the entropy $h({\cal M}, X)$. Then we get the topological entropy of the pair $(\G,\G_0)$ as defined by Candel-Conlon \cite{CaCo1}- \cite{CaCo2} and Walczak \cite{Wa}. Again if $(M,g)$ is a compact Riemannian  manifold and each element of $\G_0$ is Lipschtzian  by same arguments as in subsection \ref{topmap} the entropy of the pair $(\G,\G_0)$ is finite  (see also \cite{Wa} and \cite{GLW}).

\subsubsection{Topological entropy of a vector field}${}$\\
Consider a compact Riemannian metric $(M,g)$ and $Z$ a $C^1$ vector field on $M$. The flow $\phi_t$ of $Z$ is then complete and we can consider the family of metric
$$d_r(x,y)=\dis\sup_{0\leq s\leq r}d(\phi_t(x),\phi_t(y)$$
Again, ${\cal M}=\{d_r\}_{r\in \R^+}$ is an increasing family of distances on $M$. Therefore we can consider the  corresponding entropy $h({\cal M},X)$. This is exactly the topological entropy of $Z$  (see for example  \cite{Di}). Again in this case the entropy is finite (same arguments as  in subsection \ref{topmap} or \cite{Wa} and \cite{GLW}

\subsubsection{Entropy of a regular distribution}\label{entropyregD}${}$\\
A regular distribution $D$ on a compact manifold $M$ is a subbundle of $TM$. We fix  a Riemannian metric $g$ on $M$ and denote by $d$ the associated distance. Consider the set $\cal A$ of absolutely continuous curve $\g:[a,b]\ap M$ which are tangent {\it a.e.} to $D$. We denote by $l(\g)$ the length of a curve $\g$ relative to $g$.
We can consider the filtration $\{{\cal A}_r\}_{r\in\R^+}$ defined by
$${\cal A}_r=\{\g \in {\cal A} : l(\g)\leq r\}$$
Clearly this filtration satisfies the assumption of Definition \ref{filtration}. On the one hand, according to Proposition \ref{popdr}, to this filtration is  associated a family  ${\cal M}=\{d_r\}_{r\in \R^+}$ of increasing distances. Therefore we can defined the entropy $h({\cal A},M,d)$ according to Definition \ref{entropy A}.\\
On the other hand,
 following \cite{Bis}, recall that a {\it complete transversal} is a submanifold $T$ of $M$ of dimension $q=$codim $D$ such that for any $x\in M$, there exists $\g\in {\cal A}$  which joins $x$ to $T$. If $N(d_r,\epsilon, T)$ is the maximal cardinal of any subset of $T$ which is $(d_r,\epsilon)$-separated ({\it cf.} section \ref{entropydr}), then we define
$$h(D,T)=\dis\lim_{\epsilon\ap 0^+}\limsup_{r\ap\infty}\frac{1}{r}\ln N(d_r,\epsilon,T)$$
In fact  the entropy $h(D,T)$ is nothing but else the previous defined entropy number $h({\cal A},M,d)$.
In particular  $h(D,T)$ is independent of the choice of such a transversal $T$.\\ Indeed, it is clear that $N(d_r,\epsilon, T)\leq N(d_r,\epsilon)$. Now if $\a_x$ is the minimal length of a curve $\g\in {\cal A}$ which joins $x$ to $T$ we set 
$$\a=\dis\sup_{x\in M}\a_x$$
Then clearly we have
$$N(d_r,\epsilon)\leq N(d_{r+\a},\epsilon, T)$$
it follows that by passing to the limit when $r\ap\infty$ we get the announced result.

\subsubsection{Geometrical entropy of a regular foliation}\label{folentrop}${}$\\
 Let ${\cal F}$ be a regular foliation on a compact manifold $M$ Ghys, Langevin and Walczak have defined a geometrical entropy $h_{GLW}({\cal F})$ of $\cal F$ relative  to a Riemannian metric on $M$ (see \cite{GLW}). By Theorem B of \cite{Bis}, the entropy $h_{GLW}({\cal F})$ is equal to  $h(D,T)$ defined in subsection  \ref{entropyregD} where $D$
 is the distribution tangent to $\cal F$ and $T$ is any complete transversal. Aigain in this case $h_{GLW}(F)$ is finite (see \cite{GLW} and  \cite{Wa})
\subsubsection{Geometrical entropy of an anchored bundle} (\cite{Zu})\label{anchoredentrypy}${}$\\
An anchored bundle $(A,M,\sharp)$ on a smooth connected manifold $M$ is  the data of a  vector bundle $p:A\ap M$ and  an anchor which is a bundle map $\sharp : A\ap TM $. Then $D=\sharp(A)$ is a distribution on $M$ which is smooth in the sense of \cite {Su} (see also section \ref{smoothD}).
 We denote by $A_x$ the fiber $p^{-1}(x)$ for any $x\in M$ and  by ${D}_x=\sharp(A_x)$. We provide  each fiber $A_x$  with a norm $||\;||$. \\
 An absolutely  curve $\g:[a,b]\ap M$ is called {\bf $A$-admissible} if there exists a measurable section $u:[a,b]\ap A$ over $\g$ such that $\sharp(\g,u)=\dot{\g}$ for almost   $t\in [a,b]$. The path $\hat{\g}$ will be called a {\bf $A$-path} and we denote by ${\cal A}$ the set of $A$-path. We provide  each fiber $A$  with a norm $||\;||$ on each fiber $A_x$\footnote{In \cite{Zu}, the data $(A,M,\sharp,||\;||)$ is called a geometrical structure on $M$}. We have then a natural filtration $\{{\cal A}_r\}_{r\in \R^+}$ on  $\cal A$ defined by:
 $${\cal A}_r=\{\g\in {\cal A} \textrm{ such that } \exists u:[a,b]\ap A \textrm{ over  } \g,\;\sharp(\g,u)=\dot{\g},\;  ||{u}(t)||\leq r\; \forall t\in [a,b] \;{\it a.e.}\}$$
Clearly ${\cal A}_0$ is the set of constant curves in $\cal A$ and we have ${\cal A}_0\subset {\cal A}_s\subset{\cal A}_r$ for $0<s<r$. Now, let $\g\in {\cal A}_r$. There exists a section  $u: [0,1]\ap A$  over$\g$ such that  $\sharp(\g,u)=\dot{\g}$ and   $||u(t)||\leq r$ for all $ t\in [0,1]$ {\it a.e.}. We put
$\hat{\g}(t)=(\g(t),u(t))$ 
If we set $\t=\frac{s}{r}t$, consider the curve $c(\t)=\g(\frac{s}{r}t)$ for $ t\in [0,1]$. Then we have 
\begin{center}
$\frac{dc}{d\t}=\sharp(c(\t),\frac{s}{r}u(\t)).$
\end{center}
Thus we obtain a curve $\hat{c}:[0,1]\ap A$ defined by
\begin{center}
$\hat{c}(\t)=(c(\t),v(\t))=\frac{s}{r}u(\t) \textrm{ for } \t\in [0,\frac{s}{r}] \textrm{ and } \hat{c}(\t)=(c(\frac{s}{r}),v(\frac{s}{r}))$ for all $\t\in[\frac{s}{r},1]$.
\end{center}
and  then $p\circ \hat{c}$ belongs to ${\cal A}_s$ (as  announced in \cite{Zu}).\\

It follows that the previous filtration $\{{\cal A}_r\}_{r\in \R^+}$ satisfies the assumption of  definition \ref{filtration}. Now  if we provide $M$ with a Riemannian metric $g$ and $d$ is the associated distance, by Proposition \ref{popdr} the associated family ${\cal M}=\{d_r\}_{r\in\R^+}$ of increasing permits to define the entropy
$h({\cal A},M,d)$ which is exactly the {\it entropy of the geometric structure} $h(A,M,\sharp,||\;||)$ defined in \cite{Zu}.\\
When $A$ is an  subbundle of $TM$ and  if the norm  $||\;||$ on $A$ is the Riemannian induced norm, the geometric entropy  $h(A,M,\sharp,||\;||)$ is then the entropy $h(A,T)$ as defined in \cite{Bis} ({\it cf.} Remark 3.5).  Moreover, if $\cal A$ is integrable,  we have   $h(A,M,\sharp,||\;||)=h(A)=h_{GLW}({\cal F})$ where $\cal F$ is   the foliation defined  by $A$ ({\it cf.}  Theorem 3.9 in \cite{Zu}).\\
Recall that this entropy can be zero (for instance if $\sharp$ is surjective) or strictly positive (see \cite{Zu} for Examples of such situations ).\\

{\it A particular case of  norm on an anchored bundle is the context of Finsler metric.}\\

\begin{Def}\label{Minkowski}${}$
\begin{enumerate}
\item A Minkowski norm on a vector space $E$ is norm $F:V\ap \R^+$ which is smooth on $E\setminus \{0\}$ and such that for any $u\in E\setminus\{ 0\}$ the quadratic form $g_{u}(v,w):=\dis\frac{1}{2}\frac{\p^2 {\cal F}^2}{\p s \p t}(x,u+sv+tw)_{| s,t=0}$ is definite positive for all $v,w\in E$.
\item A Finsler metric $F$ on $A$  is a  smooth map $F:A\ap \R^+$ such that $F(x,\;)$ is a Minkowski norm on each fiber $A_x$
\end{enumerate}
\end{Def}

In this context, the filtration $\{{\cal A}_r\}_{r\in \R^+}$ on  $\cal A$ defined by:
$${\cal A}_r=\{\g\in {\cal A}\;:\; \exists u:[a,b]\ap A \textrm{ over  } \g,\; \dot{\g}(t)=\sharp(\g(t),u(t)),\;  F(\g(t),{u}(t))\leq r\; \forall t\in [a,b] \;{\it a.e.}\}$$

We choose any Finsler metric $\Phi$ on $M$ and we denote by $d_\Phi$ the associated distance. As previously we obtain an entropy $h(A,M,\sharp,F,\Phi)$ which will be called the
{\bf Finsler entropy} of $(A,M,\sharp)$.

\subsubsection{Entropy and admissible curves for a Stefan-Sussmann  foliations}\label{entropiefol}${}$\\
According to \cite{St} and \cite{Ku}  we have:

\begin{Def}\label{steffol}${}$\\
A Stefan-Sussmann foliation  on a smooth manifold $M$ is   a partition $\cal F$ of $M$ into connected immersed submanifolds called leaves  which fulfills the following property:

for each $x \in  M$, there exists a local chart $(D_\phi,\phi)$ on $M$ around $x$ with
the following properties :

(a) $\phi$ is a surjection $D_\phi\ap  U_\phi\times W_\phi$ where $U_\phi$, $W_\phi$ are open neighbourhoods of $0$ in $\R^k$ and $\R^{n-k}$ respectively, and $k$ is the dimension of the leaf through $x$; 

(b) $\phi(x)=(0,0)$;

(c) If $L$ belongs to $\cal F$, then $\phi(L\cap D_\phi)=U_\phi\times l_{\phi,L}$ where
$$l_{\phi,L}= \{ w\in W_\phi\;:\; \phi^{-1}(0,w)\in L\}.$$

A chart $(D_\phi,\phi)$ which satisfies  the above condition is called a {\it distinguished chart} around $x$.
\end{Def}

Given a Stefan-Sussmann foliation $\cal F$ on a compact manifold $M$, consider a  set $\cal A$ of admissible absolutely continuous curves $\g:[a,b]\ap M$. We say that $\cal A$ is compatible with $\cal F$ if   the following property is satisfied:
\begin{eqnarray}\label{adcompatible}
\forall \g\in {\cal A} \textrm{ defined on } [a,b],\;\;  \g([a,b])\textrm{ is contained in some leaf }  L.
\end{eqnarray}


 We consider a Stefan-Sussmann  foliation $\cal F$ on a compact manifold $M$ and $d$ a distance on $M$  which defines the topology of manifold of $M$. Let $\cal A$ be  a set of admissible curves compatible with $\cal F$. Given  a filtration ${\cal A}_{r\in \R^+}$ which fulfills the assumptions of Definition \ref{filtration}  we can associate a family ${\cal M}=\{d_r\}_{r\in\R^+}$ of increasing distance. We get an entropy $h({\cal M},M)$ which depends of $\cal A$ and also of $\cal F$. This entropy will be denoted $h({\cal M},{\cal F},M,d)$.

 In the case of foliation defined by the image of an anchored bundle $(A,M,\sharp)$ then the set $\cal A$ of $A$-paths satisfies the property of compatibility (\ref{adcompatible}). Given any Finsler metric $F$ on $A$ and any Finsler metric $\Phi$ on $M$ then the Finsler entropy $h(A,M,\sharp,F,\Phi)$ of $(A,M,\sharp)$ is nothing but else that the previous entropy $h({\cal M},{\cal F},M,d_\Phi)$ if $d_\Phi $ is the distance on $M$ associated to $\Phi$ (see section \ref{entropyD})

\section{Finsler entropy of a  smooth  distribution}
\subsection{Smooth  distribution}\label{smoothD}${}$\\
We first begin by some preliminaries:

Consider a connected paracompact manifold $M$ of dimension $n$ and let $p: A\ap M$ be a smooth real vector bundle over $M$. 

1. Denote by ${\cal C}^\infty(M)$ the algebra of smooth  functions on $M$ and  in this algebra consider 
 ${\cal C}_c^\infty(M)$ the ideal  of smooth functions with compact support.

2. Denote by ${\cal C}^\infty(M,A)$ the ${\cal C}^\infty(M)$-module of smooth sections of  vector bundle $p: A\ap M$ and in this module consider   
the submodule   ${\cal C}_c^\infty(M,A)$  of ${\cal C}^\infty(M)$ of smooth sections with compact support in $M$. In particular the module ${\cal C}_c^\infty(M,TM)$ will be denoted ${\Xi}(M)$. 

3. Let $\cal E$ be a submodule of ${\cal C}_c^\infty(M,A)$. The submodule $\hat{\cal E}\subset {\cal C}^\infty(M,A)$ of global sections of $\cal E$ is the set
of sections $\s\in {\cal C}^\infty(M)$ such that, for all $\phi\in  {\cal C}_c^\infty(M)$, we have $\phi.\s\in  {\cal E.}$\\
\noindent If $\g:I\ap M$ is a smooth curve defined on an interval $I$ of $\R$,  we denote by ${\cal E}_\g$ the restriction of $\cal E$ to $\g(I)$ and any $\s\in {\cal E}_\g$ is called {\it a section of $\cal E$ along $\g$}.

The module $\cal E$ is said to be {\it finitely generated } if there exist global sections $\s_1\dots\s_k$ of $\cal E$ such that
$${\cal E} = {\cal C}_c^\infty(M)\s_1 +\cdots+ {\cal C}_c^\infty(M)\s_k.$$

4. Let  $f : N \ap M$ be a smooth map between two manifolds $N$ and $M$. Denote by $f^*(A)$ the pull-back bundle on $N$ of a vector bundle $A\ap M$ over $M$. If $\cal E$ is a submodule of ${\cal C}_c^\infty(M,A)$, the pull-back module $f^*({\cal E})$ is the submodule of ${\cal C}_c^\infty(N,f^*(A))$  generated $\phi.(\s\circ f)$ with $\phi\in {\cal C}_c^\infty(N)$ and $\s\in {\cal E}$. 

5. A submodule ${\cal E}$ of ${\cal C}_c^\infty(M)$  is said to be {\it locally finitely generated} if there exists an open cover $(U_i)_{i\in I}$
of $M$ such that the restriction of ${\cal A} $ to each $U_i$ is finitely generated.

\begin{Def}\label{singditribution}${}$
\begin{enumerate}
\item A   distribution  on $ M$   is a field $x\mapsto D_x$ of vector subspace of $T_xM$ 
\item A subset ${\cal X}$ of $\Xi(M)$  generates a distribution $D$ if
$$D_x=\textrm{span}\{X_{x}, X\in {\cal X}\}.$$
\item If $\cal D$ a locally finitely generated submodule of $\Xi(M)$,   we will say that $D$ is  locally finitely generated by $\cal D$.
\item $D$ is called a smooth  distribution if there exists a subset ${\cal X}$ of $\Xi(M)$ which generates $D$. 
\end{enumerate}
\end{Def}
If $D$ is a smooth  distribution, the set of all vector fields $X\in\Xi(M)$ which are tangent to $D$ is a submodule of $\Xi(M)$ is denoted ${\cal X}_D$.




The distribution is called regular if $\dim D_x$ is constant (independent of $x$). Then $D$ is a subbundle of $TM$ and  ${\cal D}={\cal C}_c(M,D)$. When the dimension of $D_x$ is not constant we say that $D$ is a singular. An important general situation of smooth distribution which  is  locally finitely generated is when $D$ is the range  of an anchored bundle $(A,M,\sharp)$ ({\it cf.} subsection \ref{anchoredentrypy}) . 

 However, not all smooth distribution $D$  is finitely generated and not all smooth finitely generated distribution is  the image of the  module of section of an anchored bundle (for an illustration of such a situations see examples in \cite{Su} and \cite{AnSk} respectively). But when $M$ is compact we have
 
 \begin{Pro}\label{imagebundle} ${}$\\ 
 Let $D$ be a smooth distribution on a  compact  manifold which is locally finitely generated by a  module $\cal D$. Then there exists an anchored bundle $(A,M,\sharp)$ such that  ${\cal D}=\sharp({\cal C}_c^\infty(M,A))$
\end{Pro}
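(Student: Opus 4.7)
The plan is to use the compactness of $M$ to upgrade the local finite generation to a global one, and then realize the finite set of global generators as the image of the anchor of a trivial vector bundle.

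First I would extract a finite subcover. By hypothesis there is an open cover $(U_i)_{i\in I}$ of $M$ such that on each $U_i$ the module ${\cal D}|_{U_i}$ is generated by finitely many sections $\sigma_1^i,\dots,\sigma_{k_i}^i\in \widehat{{\cal D}|_{U_i}}$. By compactness I can reduce to a finite cover $U_1,\dots,U_N$. I would then pick a refinement $(V_i)_{i=1}^N$ with $\bar V_i\subset U_i$, a partition of unity $\{\phi_i\}$ subordinate to $(V_i)$, and cutoff functions $\psi_i\in{\cal C}^\infty_c(U_i)$ with $\psi_i\equiv 1$ on $\bar V_i$. Then $\Sigma_j^i := \psi_i\sigma_j^i$, extended by zero outside $U_i$, is a globally defined smooth section of $TM$ which, since $\psi_i\in{\cal C}^\infty_c(U_i)$, belongs to ${\cal D}$.

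The key step is to show that the finite family $\{\Sigma_j^i\}_{i,j}$ generates ${\cal D}$ as a ${\cal C}^\infty(M)$-module (recall that on the compact manifold $M$ one has ${\cal C}^\infty_c(M)={\cal C}^\infty(M)$). For $X\in{\cal D}$ I would not work with $\phi_i X$ directly, but rather with $\phi_i^2 X$, whose support lies in $V_i$ where $\psi_i\equiv 1$. On $U_i$ one has a decomposition $(\phi_i X)|_{U_i}=\sum_j h_{ij}\sigma_j^i$ with $h_{ij}\in{\cal C}^\infty_c(U_i)$, hence
\[
\phi_i^2 X \;=\; \sum_j (\phi_i h_{ij})\,\sigma_j^i \;=\; \sum_j (\phi_i h_{ij})\,\Sigma_j^i
\]
globally, because $\phi_i h_{ij}$ is supported in $V_i$, where $\sigma_j^i$ and $\Sigma_j^i$ coincide. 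Now the smooth function $\Phi:=\sum_i \phi_i^2$ is strictly positive on $M$ (at every point some $\phi_i$ is nonzero), so $1/\Phi\in{\cal C}^\infty(M)$ and
\[
X \;=\; \frac{1}{\Phi}\sum_i \phi_i^2 X \;=\; \sum_{i,j}\Bigl(\frac{\phi_i h_{ij}}{\Phi}\Bigr)\,\Sigma_j^i,
\]
which exhibits $X$ as a ${\cal C}^\infty(M)$-linear combination of the $\Sigma_j^i$.

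Once the global finite generation is established, the anchored bundle is constructed in the obvious way: take the trivial bundle $A=M\times\R^{N'}$ with $N'=\sum_i k_i$ and define the anchor $\sharp:A\to TM$ on the canonical basis $(e_j^i)$ of $\R^{N'}$ by $\sharp(x,e_j^i):=\Sigma_j^i(x)$, extended by linearity on fibers. Then $\sharp$ is smooth, $\sharp({\cal C}^\infty_c(M,A))$ is precisely the ${\cal C}^\infty(M)$-submodule of $\Xi(M)$ spanned by the $\Sigma_j^i$, which equals ${\cal D}$ by the previous step.

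The main technical obstacle is the globalization step: a naive use of a partition of unity produces local identities $\phi_i X=\sum_j h_{ij}\sigma_j^i$ but the coefficients $h_{ij}$ need not be supported where the cutoff $\psi_i$ equals $1$, so replacing $\sigma_j^i$ by $\Sigma_j^i$ is not direct. The square-partition trick $\phi_i\rightsquigarrow\phi_i^2$ together with division by the strictly positive function $\Phi=\sum_i\phi_i^2$ circumvents this, and seems to be the essential point of the proof.
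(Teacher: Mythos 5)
Your proof is correct and reaches the conclusion, but by a route that differs from the paper's in a meaningful way. The paper does not work directly with the local generators $\sigma_j^i \in \widehat{{\cal D}|_{U_i}}$ supplied by the bare definition of local finite generation; instead it invokes (the argument of) Proposition 1.5 of \cite{AnSk} to produce, around each point, generators $X_1^i,\dots,X_{p_i}^i$ that already lie in $\cal D$ as globally defined compactly supported vector fields on $M$. With that granted, the final step is a plain partition-of-unity decomposition: $\phi_i X=\sum_j f_j^i X_j^i$ on $U_i$ with $f_j^i\in{\cal C}_c^\infty(U_i)$, the $f_j^i$ extend by zero, and summing over $i$ exhibits $X$ in $\sharp({\cal C}_c^\infty(M,A))$. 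You instead keep the $\sigma_j^i$ as merely local, globalize them by hand via the cutoffs $\psi_i$, and then face the real issue your proof correctly identifies: from $\phi_i X=\sum_j h_{ij}\sigma_j^i$ one cannot simply replace $\sigma_j^i$ by $\Sigma_j^i$, because the individual $h_{ij}$ need not be supported where $\psi_i\equiv 1$ (only the sum has small support, not each term). Your squared partition $\phi_i^2 X=\sum_j(\phi_i h_{ij})\Sigma_j^i$ followed by division by the strictly positive $\Phi=\sum_i\phi_i^2$ handles this cleanly and is the genuinely new ingredient compared with the paper. In short: the paper buys simplicity by importing a nontrivial globalization result from \cite{AnSk}, while your argument is self-contained at the cost of the $\phi_i^2/\Phi$ device. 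Both are valid; the one thing worth flagging in yours is that $\Sigma_j^i\in{\cal D}$ rests on the convention that $\psi_i\sigma_j^i\in{\cal D}|_{U_i}$ extends by zero to an element of $\cal D$, which is the standard reading of restriction for these modules but is not spelled out in the paper's preliminaries.
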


\begin{proof} Let $x$ be a point in $M$, using the  arguments in the proof of  Proposition 1.5 in \cite{AnSk},  we can show that there exists   $X_1,\dots X_{p(x)}\in  {\cal D}$ and an open neighbourhood $U$ of $x$ in $M$ such that $\cal D$ is generated by $\{X_1,\dots X_p\}$ on $U$ and is a basis of ${\cal D}_x$. Since $M$ is compact, we can find a finite set $U_1,\dots U_N$ of such open sets which is a covering  of $M$. Denote by $\{X_1^i,\dots X_{p_i}^i\}$  a family of vector fields in $\cal D$ which has the previous properties  on $U_i$. Then we get a family ${\cal X}=\dis\bigcup_{i=1}^N\{X_1^i,\dots X_{p_i}^i\}$ of vector fields on $M$. Consider the trivial bundle $A=M\times \R^{p_1+\cdots+p_n}$ and $\sharp: A\ap TM$ the map characterized by $\sharp(x, e_j^i)=X_i^j(x)$ where $\{e_j^i\}_{j=1\dots,p_i}$ is the canonical basis of  the factor $\R^{p_i}$ of $ \R^{p_1+\cdots+p_n}$ for all $i=1,\dots,N$. Now given any $X\in \cal D$, consider an partition of unity $\{\phi_i\}_{i=1,\dots,N}$ associated with $\{U_i\}_{i=1,\dots,N}$. Then by construction of the family ${\cal X}$ ,  on each $U_i$ we have: 
$$\phi_i X=\dis\sum_{j=1}^{p_i }f_j X_j^i=\dis\sum_{j=1}^{p_i} f_j \phi_iX_j^i$$
It follows that
$$X=\dis\sum_{i=1}^N\phi_i X=\dis\sum_{i=1}^N\dis\sum_{j=1}^{p_i} (f_j \phi_i)X_j^i=\sharp(\dis\sum_{i=1}^N\dis\sum_{j=1}^{p_i} (f_j \phi_i)e_j^i)$$
This ends the proof since for all $j=1,\dots,p_i$ and $i=1,\dots,N$ each component $f_j \phi_i$ is a smooth map on $M$  whose support is contained in   the compact support of $\phi_i$.\\

\end{proof}

An important case of smooth distribution is the case of integrable distribution:

\begin{Def}\label{foliation}${}$\\
A smooth  distribution $D$    is called integrable if there exists a partition $\cal F$ of $M$  in embedded smooth manifolds  called leaves such that for each $x\in M$ we have $D_x=T_xL$ for all $x$ in a leaf $L$ ant any leaf $L$ of $\cal F$.
 \end{Def}
If $D$ is integrable, we will say that {\bf $\cal F$ is defined by $\cal D$}. 
\begin{Rem}\label{smoothstef}${}$\\
From \cite{St} or \cite{Su}, if a smooth distribution $D$ is integrable, then the associated foliation is a Stefan-Sussmann foliation. Conversely, if $\cal F$ is a stefan-Sussmann foliation, from Definition \ref{steffol} it follows that the distribution $D$ defined by $D_x=T_xL$ if $L$ is the leaf through $x$ is a smooth distribution generated by the submodule ${\cal X}_{\cal F}$ of $\Xi(M)$ of vector fields in $\Xi(M)$ which are tangent to the leaves of $\cal F$.\\
\end{Rem}

For locally finitely generated smooth distribution we have the classical criteria of integrability:

\begin{The}\cite {St},\cite{Su}:
Let $D$ be a smooth distribution locally finitely generated by a module $\cal D$. if $\cal D$ is stable by Lie bracket of vector fields then $D$ is integrable
\end{The}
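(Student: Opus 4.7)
The plan is to reconstruct the classical Stefan--Sussmann argument: build the candidate leaf through each point $x_0\in M$ as the set of endpoints of concatenated flows of vector fields in $\cal D$, and then use the Lie-bracket stability of $\cal D$ to endow this set with a smooth immersed manifold structure whose tangent space at every point $y$ is exactly $D_y$.

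First I would localize. Fix $x_0\in M$ and set $p=\dim D_{x_0}$. By local finite generation, there exist $X_1,\dots,X_k\in{\cal D}$ generating $\cal D$ on a neighbourhood $U$ of $x_0$, and after relabeling we may assume $X_1(x_0),\dots,X_p(x_0)$ is a basis of $D_{x_0}$. I would then consider the map
$$\Phi(t_1,\dots,t_p)=\phi^{X_1}_{t_1}\circ\cdots\circ\phi^{X_p}_{t_p}(x_0),$$
defined for $(t_1,\dots,t_p)$ in a small neighbourhood $V$ of $0$ in $\R^p$, where $\phi^{X_i}_{t}$ denotes the local flow of $X_i$. A direct computation of $D\Phi(0)$ shows that $D\Phi(0)(\p/\p t_i)=X_i(x_0)$, so $\Phi$ is an immersion near $0$ and the image $S=\Phi(V)$ is a $p$-dimensional embedded submanifold of $M$ tangent to $D_{x_0}$ at $x_0$.

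The core step, and the main obstacle, is to show that for every $y\in S$ one has $T_yS=D_y$. This reduces to the following invariance lemma: if $X\in{\cal D}$ and $Y\in{\cal D}$, then $(\phi^X_t)_\ast Y_z\in D_{\phi^X_t(z)}$ for every $z$ and every $t$ in the common domain of definition. To prove it, I would fix local generators $X_1,\dots,X_k$ of $\cal D$ on a neighbourhood, use Lie bracket stability to write $[X,X_j]=\sum_i c_{ij}X_i$ for smooth functions $c_{ij}$, and observe that the time-dependent vector field $Y_j(t,z):=(\phi^X_{-t})_\ast X_j\bigl(\phi^X_t(z)\bigr)$ satisfies the linear ODE
$$\tfrac{d}{dt}Y_j(t,z)=-\sum_i c_{ij}\bigl(\phi^X_t(z)\bigr)\,Y_i(t,z),\qquad Y_j(0,z)=X_j(z),$$
in the finite-rank module spanned by $X_1(z),\dots,X_k(z)$. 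By uniqueness of solutions the values $Y_j(t,z)$ remain in $D_z$, which is exactly the invariance we need after pushing forward by $\phi^X_t$. This is where the hypothesis $[{\cal D},{\cal D}]\subset{\cal D}$ and the local finite generation are both indispensable.

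Granting the lemma, for $y=\Phi(t_1,\dots,t_p)$ the image under $(\phi^{X_1}_{t_1})_\ast\circ\cdots\circ(\phi^{X_p}_{t_p})_\ast$ of the basis $X_1(x_0),\dots,X_p(x_0)$ sits inside $D_y$ by iterated application of the lemma, and it spans $T_yS$ by the chain rule; since $\dim T_yS=p\le\dim D_y$, one further uses the lemma applied to the inverse flows to conclude $\dim D_y=p$ and hence $T_yS=D_y$. Finally, I would define the leaf $L$ through $x_0$ as the set of all points obtained from $x_0$ by finite concatenations $\phi^{Y_r}_{s_r}\circ\cdots\circ\phi^{Y_1}_{s_1}(x_0)$ with $Y_i\in{\cal D}$, topologise it by declaring the maps $\Phi$ (built at various base points and with various choices of generators) to be charts, and check compatibility of these charts using the invariance lemma. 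The partition $\cal F$ of $M$ into these leaves is then the integrating partition, satisfying $T_xL=D_x$ for every $x\in L\in{\cal F}$.
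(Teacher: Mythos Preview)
The paper does not supply its own proof of this theorem; it is quoted as a classical result with attribution to \cite{St} and \cite{Su} and is followed only by a remark on counter-examples to the converse. There is therefore nothing in the paper to compare your argument against. Your outline is a faithful reconstruction of the classical Hermann--Stefan--Sussmann proof via flow-invariance of $D$, and the overall strategy is correct.

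Two small points are worth flagging. First, with the standard convention one has $\tfrac{d}{dt}(\phi^X_t)^\ast X_j=(\phi^X_t)^\ast[X,X_j]$, so the linear ODE for $Y_j(t,z)$ should read $\tfrac{d}{dt}Y_j=\sum_i c_{ij}(\phi^X_t(z))\,Y_i$ without the minus sign; this does not affect the conclusion of the invariance lemma. Second, the passage from the local slices $S=\Phi(V)$ to a global immersed leaf and then to the partition ${\cal F}$ is precisely the part of the Stefan--Sussmann argument that demands the most care: one must verify that two charts $\Phi$ built at nearby base points on the same orbit (possibly with different choices of generators and with $p$ varying from leaf to leaf) are smoothly compatible, and that the resulting leaf topology is second countable. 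Your closing sentence ``check compatibility of these charts using the invariance lemma'' is correct in spirit, but if this were meant as a full proof rather than a sketch, that step would need to be spelled out.
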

Note that the converse is not true. The reader could find contre-examples in \cite{St} and \cite{Su}.\\

The foliation $\cal F$ is called {\bf regular}    if $\dim D_x$ is regular and so $D$ is a subbundle of $TM$. When the dimension of $D_x$ is not constant we say that $\cal F$ is a Stefan-Sussmann  foliation. 

Of course  if $L$ is the leaf of $\cal F$  through $x$ then $T_yL=D_y$ for any $x\in L$. In particular $D_x$ has constant dimension on $L$

An important  example of integrable smooth  distribution is given by  a {\bf foliated anchored bundle} (see \cite{Pe}). More precisely,
A foliated anchored  bundle is an anchored bundle $(A,M,\sharp)$ such that ${\cal D}=\sharp({\cal C}_c^\infty(M,A))$ is stable under  Lie bracket. \\
Recall that an almost Lie bracket on  $(A,M,\sharp)$ is an $\R$ skew symmetric bilinear $[\;,\;]_A: {\cal C}^\infty(M,A)\times  {\cal C}^\infty(M,A)\ap  {\cal C}^\infty(M,A)$ such that

$[\sigma,f\rho]_A=f[\sigma,\rho]_A+df(\sharp\s)\rho$ for all $\s,\rho\in  {\cal C}^\infty(M,A)$ and $f\in  {\cal C}^\infty(M)$

 There always exists an almost Lie bracket $[\;,\;]_A$ on  any  anchored bundle $(A,M,\sharp)$. If $(A,M,\sharp)$ is an anchored vector bundle, there always exists an almost Lie bracket  $[\;,\;]_A$  which is {\it compatible with the anchor} $\sharp$ that is:

 $[\sharp\s,\sharp\rho]=\sharp[\s,\rho]_A$ for all $\s,\rho\in  {\cal C}^\infty(M,A)$.
 
 This situation occurs in particular when $(A,M,\sharp),[\;,\;]_A)$  is a Lie algebroid   that is $( {\cal C}^\infty(M,A),[\;,\;]_A)$ has a Lie algebra structure and then $\sharp:  {\cal C}^\infty(M,A)\ap  {\cal C}^\infty(M,TM)$ is a Lie algebra morphism.
 
 However, generally the distribution $D$ associated to a  Stefan-Sussmann foliation $\cal F$  is not locally finite generated (see Examples in \cite{St} or \cite{Su}). Moreover even if this distribution $D$ is finitely generated this not implies that $\cal F$  comes from a foliated anchored bundle (see an Example in \cite{AnSk}).  But when $M$ is compact, from Proposition \ref{imagebundle}
  we have
 
 \begin{Pro}\label{image foliated} ${}$\\ 
 Let $\cal F$ be a   foliation associated to an integrable smooth locally finitely generated distribution $D$ on a smooth compact  manifold. Then there exists a foliated anchored bundle $(A,M,\sharp)$ such that  ${\cal F}$ is defined by the module $\sharp({\cal C}_c^\infty(M,A))$.\\
\end{Pro}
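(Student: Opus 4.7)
The plan is to combine Proposition \ref{imagebundle} with the integrability of $D$ to produce an anchored bundle whose image module is stable under Lie bracket. Start with the locally finitely generated module $\cal D$ defining $D$; Proposition \ref{imagebundle} already yields an anchored bundle $(A_0, M, \sharp_0)$ with $\sharp_0({\cal C}_c^\infty(M, A_0)) = {\cal D}$. However $\cal D$ itself need not be bracket-stable (the converse to the Stefan-Sussmann integrability criterion fails in the smooth category), so this bundle is not yet foliated. The work is therefore to replace $\cal D$ by a bracket-stable, locally finitely generated module that still pointwise generates $D$, and then apply Proposition \ref{imagebundle} to the replacement.

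The construction I would use exploits the Stefan-Sussmann structure of $\cal F$. First, cover the compact manifold $M$ by finitely many distinguished charts $(D_{\phi_i},\phi_i)_{i=1,\dots,N}$ of Definition \ref{steffol}, in each of which the plaques are of the form $U_{\phi_i}\times\{w\}$ of constant dimension $k_i$. In such a chart the coordinate vector fields $Y_1^i,\dots,Y_{k_i}^i$ along the plaque directions commute pairwise and span $D_y$ at every $y\in D_{\phi_i}$. Take a subordinate partition of unity $\{\chi_i\}$ and put
$$\widetilde{\cal D}=\textrm{span}_{{\cal C}_c^\infty(M)}\{\chi_i Y_j^i\;:\;i=1,\dots,N,\ j=1,\dots,k_i\}.$$
This module is locally finitely generated, and at each $x\in M$ at least one $\chi_i(x)>0$, so the vectors $Y_1^i(x),\dots,Y_{k_i}^i(x)$ span $D_x=T_xL$; hence $\widetilde{\cal D}$ still defines the foliation $\cal F$.

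The core of the argument is the bracket-closure step. By the Leibniz identity,
$$[\chi_i Y_l^i,\chi_j Y_m^j]=\chi_i\chi_j[Y_l^i,Y_m^j]+\chi_i Y_l^i(\chi_j)\,Y_m^j-\chi_j Y_m^j(\chi_i)\,Y_l^i.$$
When $i=j$ the first term vanishes by commutation of the coordinate fields, and the last two are smooth multiples of generators already in $\widetilde{\cal D}$. When $i\neq j$ the first term lies pointwise in $D$ by integrability, since $Y_l^i,Y_m^j$ are each tangent to $\cal F$. I would then iterate by adjoining the finitely many brackets $[Y_l^i,Y_m^j]$ arising on each chart overlap $D_{\phi_i}\cap D_{\phi_j}$, expressing each newcomer in the coordinate bases of the relevant distinguished charts; this adds at most finitely many new generators per chart (bounded by the local rank $k_i$). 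After finitely many iterations the module becomes bracket-stable; applying Proposition \ref{imagebundle} to it produces the desired foliated anchored bundle.

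The main obstacle is precisely this iterative closure. On overlaps between charts whose leaf dimensions $k_i\neq k_j$ differ, the generators are not aligned with a common coordinate system, and the terms $\chi_i Y_l^i(\chi_j)\,Y_m^j$ must be rewritten as ${\cal C}_c^\infty(M)$-combinations of the fixed generators even at points where $\chi_j$ vanishes while its derivative does not, which prevents any naive division. The key observation that makes the process terminate is that every iterated bracket lies in the rank-bounded fiber $D_x\subset T_xM$, so the coordinate bases of the distinguished charts absorb all brackets after finitely many steps, yielding a bracket-stable locally finitely generated module on the compact manifold $M$ and hence the desired foliated anchored bundle.
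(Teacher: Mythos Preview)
The paper offers no separate proof: it presents this proposition as an immediate consequence of Proposition \ref{imagebundle}. You are right to notice that Proposition \ref{imagebundle} only returns an anchored bundle with image module equal to the given $\cal D$, and that $\cal D$ need not be bracket-stable merely because $D$ is integrable; so your attempt to build a bracket-closed replacement addresses a point the paper glosses over.

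That said, your construction has a real gap at the first step. In a distinguished chart $(D_{\phi_i},\phi_i)$ around a point of leaf dimension $k_i$, condition (c) of Definition \ref{steffol} says that every leaf $L$ meets $D_{\phi_i}$ in a set of the form $U_{\phi_i}\times l_{\phi_i,L}$; it does \emph{not} say all leaves through $D_{\phi_i}$ have dimension $k_i$. At a point $y\in D_{\phi_i}$ lying on a leaf of dimension $k'>k_i$, your plaque coordinate fields $Y_1^i,\dots,Y_{k_i}^i$ span only a $k_i$-dimensional subspace of $D_y$. Hence the assertion that they ``span $D_y$ at every $y\in D_{\phi_i}$'' is false in the singular case, and $\widetilde{\cal D}$ need not generate $D$. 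A repair would require arranging that each point carries positive partition-of-unity weight from a chart whose $k_i$ equals its own leaf dimension, i.e.\ a stratified cover argument you have not given.

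The second gap is the termination of the bracket-closure iteration. Your justification---that iterated brackets lie in the finite-dimensional fiber $D_x$---is a pointwise statement and says nothing about the ${\cal C}^\infty_c(M)$-module generated by successive brackets. On chart overlaps across strata the coefficients expressing $[Y_l^i,Y_m^j]$ in terms of the chosen generators may degenerate (you yourself flag the division problem where $\chi_j$ vanishes but $Y_l^i(\chi_j)$ does not), and nothing in the argument bounds the number of new module generators produced. The closing sentence ``the coordinate bases of the distinguished charts absorb all brackets after finitely many steps'' is the claim to be proved, not a proof.
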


\begin{Def}\label{transverse}${}$\\
Let  $f : N \ap M$ be a smooth map between two manifolds $N$ and $M$ and $D$ a smooth  distribution on $M$ and  ${\cal D}$ be the subbmodule of $\Xi(M)$  which generates $D$.
\begin{enumerate}
 \item Denote by  $f^{-1}({\cal D}) = \{X \in \Xi(N)\;:\; Tf(X) \in f^*({\cal D})\} $ the submodule of $\Xi(N)$.\\
 \item  We say that $f$ is transverse to $D$ 
 if for any $x\in M$, we have $D_{f(x)}+Tf(T_xN)=T_{f(x)}M$
 \end{enumerate}
 \end{Def}
 For example if $f$ is a submersion then $f$ is transverse to any smooth distribution on $M$. 
 
 \begin{Pro}\label{pullback}${}$\\
 Let  $f : N \ap M$ be a smooth map between two manifolds $N$ and $M$  and $D$ be a smooth  distribution  on $M$ generated by a module $\cal D$.
 \begin{enumerate}
 \item If $D$ is generated by $\cal D$, then  $f^{-1}({\cal D})$  generates a smooth  distribution $f^{-1}(D)$ on $N$. Moreover, if $f$ is transverse to $D$ then $\textrm{codim }\{f^{-1}(D)\}_x=\textrm{codim }D_{f(x)}$
 \item  If $f$ is tranverse to $D$ and if $D$ is integrable so is  $f^{-1}(D)$ and any leaf $\tilde{L}$ of this foliation  is a connected components of $f^{-1}(L)$ of where $L$ is some leaf of the foliation associated to $D$ and $\tilde{L}$ and $L$ have the same codimension.
 \end{enumerate}
 \end{Pro}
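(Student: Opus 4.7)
The plan is to handle Part (1) first by verifying the module structure of $f^{-1}({\cal D})$ and then computing the pointwise fiber of the distribution it generates. For the module part, I would observe that if $X,Y \in f^{-1}({\cal D})$ and $\phi \in {\cal C}_c^\infty(N)$, then by $\R$-linearity of $Tf$ and the fact that $f^*({\cal D})$ is a ${\cal C}^\infty(N)$-module, both $X+Y$ and $\phi X$ remain in $f^{-1}({\cal D})$. Hence $f^{-1}({\cal D})$ is a submodule of $\Xi(N)$, and by Definition \ref{singditribution}(4) it generates a smooth distribution $f^{-1}(D)$ with $f^{-1}(D)_x = \mathrm{span}\{X_x : X \in f^{-1}({\cal D})\}$. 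The easy inclusion $f^{-1}(D)_x \subseteq (Tf_x)^{-1}(D_{f(x)})$ is immediate from the definition of $f^*({\cal D})$.

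The core technical step, and the main obstacle, is to establish the reverse inclusion $f^{-1}(D)_x \supseteq (Tf_x)^{-1}(D_{f(x)})$ under transversality. I would argue locally: pick $v \in (Tf_x)^{-1}(D_{f(x)})$, extend $v$ to a smooth local vector field $\tilde{v}$ on a neighbourhood $V$ of $x$, and write $Tf(\tilde{v}_x) = \sum_i a_i Y_i(f(x))$ for local generators $Y_1,\dots,Y_r$ of $\cal D$ near $f(x)$. Using transversality one can complete $\sharp$-images of the $Y_i$ to a local frame of $TM$ along $f$ by adding images under $Tf$ of vector fields on $N$; then a partition of unity argument permits adjusting $\tilde{v}$ by elements of $\ker Tf$ (which lie in $f^{-1}({\cal D})$ automatically) and by sums $\sum_i \phi_i (Y_i \circ f)$-preimages to obtain a bona fide element of $f^{-1}({\cal D})$ whose value at $x$ is $v$. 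Once equality of the fibers is in hand, the codimension statement follows from the standard linear algebra identity $\dim (Tf_x)^{-1}(D_{f(x)}) = \dim T_xN - \dim T_{f(x)}M + \dim D_{f(x)}$, which is a direct consequence of transversality applied to the rank--nullity decomposition of $Tf_x$.

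For Part (2), I would invoke the leaves $L$ of the foliation defined by the integrable $D$: each $L$ is an immersed submanifold with $T_{y}L = D_y$. Transversality of $f$ to $D$ at a point $x$ with $f(x) \in L$ becomes transversality of $f$ to the immersed submanifold $L$, so by the standard implicit function argument for transverse preimages, $f^{-1}(L)$ is an immersed submanifold of $N$ whose tangent space at $x$ is $(Tf_x)^{-1}(T_{f(x)}L) = (Tf_x)^{-1}(D_{f(x)})$. By Part (1) this is exactly $f^{-1}(D)_x$, so each connected component $\tilde L$ of $f^{-1}(L)$ is an integral submanifold of $f^{-1}(D)$ of the right dimension.

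To conclude, I would assemble these connected components as $L$ ranges over all leaves of $\cal F$ into a partition of $N$ by connected immersed submanifolds tangent to $f^{-1}(D)$; this is the foliation integrating $f^{-1}(D)$. The equality of codimensions $\mathrm{codim}\,\tilde L = \mathrm{codim}\,f^{-1}(D)_x = \mathrm{codim}\,D_{f(x)} = \mathrm{codim}\,L$ follows directly from Part (1). The only subtlety worth flagging is that the leaves of $\cal F$ are merely immersed (not embedded), so the transverse preimage statement must be applied in a slice chart around $f(x)$ (using a distinguished chart from Definition \ref{steffol}) rather than globally; this is routine but needs to be mentioned to make the argument rigorous.
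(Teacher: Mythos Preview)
Your proposal follows essentially the same route as the paper: identify $f^{-1}(D)_x$ with $(T_xf)^{-1}(D_{f(x)})$, read off the codimension from linear algebra under transversality, and for Part~(2) use transversality of $f$ to each leaf $L$ to see that connected components of $f^{-1}(L)$ are integral manifolds of the right dimension. The paper is much terser --- it simply asserts that $f^{-1}({\cal D})$ generates the pointwise preimage distribution and that the codimension equality is ``clear'' --- whereas you spell out the module check and the local-frame argument for the reverse inclusion; in fact you are more careful than the paper in noting that the reverse inclusion $(T_xf)^{-1}(D_{f(x)})\subseteq f^{-1}(D)_x$ genuinely uses transversality. For Part~(2) the only real difference is that the paper, after producing local integral manifolds via a distinguished chart, appeals to Sussmann's classical integrability argument to conclude, while you assemble the partition of $N$ by the components of the $f^{-1}(L)$ directly; both are valid and amount to the same thing.
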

 Under the assumptions of Proposition \ref{pullback}, the distribution $f^{-1}(D)$ is called the {\bf pull back distribution} of the distribution $D$. If moreover $D$ is integrable and $\cal F$ is the associated foliation, the foliation $f^{-1}({\cal F})$ associated to $f^{-1}(D)$ is called {\bf the pull back foliation} of the foliation $\cal F$.
 
 \begin{proof}[Proof of Proposition \ref{pullback}]${}$\\
We can consider the distribution $\D$ on $N$ defined by $\D_x=(T_xf)^{-1}(D_{f(x)})$. Assume that $D$ is generated by a module $\cal D$. From the definition of  $f^{-1}({\cal D})$  it follows easily that  $\D$ is generated by $f^{-1}({\cal D})$.  If $f$ is transverse to $D$  then it is 
clear that codim $\D_x=$codim $D_{f(x)}$\\
Now assume moreover that $D$ is integrable. Since $f$ is transverse to $D$, then $f$  is in particular transverse to any leaf $L$ of the foliation $\cal F$ defined by $D$. Given any $x\in N$, there exists a open neighborhood $V$ of $f(x)$ in $M$ such that $V\cap L$ is an embedded submanifold of $M$ if $L$ is the leaf through $f(x)$. Therefore  each connected component $U_L=f^{-1}(V\cap L)$ is an integral manifold of $\D$ ({\it i.e.} $\D_z=T_zU_L$ for any $z\in U_L$).  Since $\D$ is smooth, it follows by classical arguments ({\it cf.} \cite{Su}) that $\D$ is integrable then the result follows immediately



 \end{proof}
\subsection{Finsler metric of a smooth singular distribution}${}$\\

{\it We begin by defined the concept of  weak Finsler metric on a smooth singular distribution:}\\

\begin{Def}\label{Finslerdistribution}${}$\\
 Let $ D$ be smooth distribution 
 on a  manifold $M$.
\begin{enumerate}
\item A {\bf weak  Finsler metric on D}  is a map $F:{D}\ap \R^+$ such that:

(i) For any absolutely  curve $\g:[a,b]\ap M$  tangent to $D$ {\it a.e.}   the map $t\mapsto F(c(t),\dot{c}(t))$  is a measurable  map from $ [a,b]$ to $\R$ bounded {\it a.e.};

(ii) $F$ induces a Minkowski norm on each vector space $D_x$ for any $x\in M$.
\item A  weak  Finsler metric $F$ on $D$ is called  of class $C^k$  ($0\leq k\leq \infty$) if for any $x\in M$,  any  absolutely continuous curve   $\g:]-\a,\a[\ap M$  tangent to $D$ {\it a.e.} of  $C^k$ such that $\g(0)=x$  and any vector field $X\in \{{\cal X}_D\}_\g$, along $\g$,   the map $t\mapsto F(c(t)X_{\g(t)})$  is  map from $ ]-\a,\a[$ to $\R^+$ of class $C^0$ for $0\leq k\leq \infty$ and moreover for $1\leq k\leq \infty $
 if $X_{\g(t)}\not=0$ for all $t\in ]-\a,\a[$ then $t\mapsto F(c(t),X_{\g(t)})$ is of class $C^k$
\end{enumerate}
\end{Def}

The  field  of quadratic forms $(x,u) \ap g_x(u)$   associated to $F(x,\;)$ will be denoted $g_F$.
Note that if $F$ is a Finsler metric of class $C^k$ for any $1\leq k\leq \infty$, then the property (i) is automatically satisfied.\\
 Given a weak Finsler metric on $D$, for any absolutely continuous curve $\g:[a,b]\ap M$ tangent to $D$  we can define its length:
\begin{equation}\label{length}
l(\g)=\dis\int_a^b F(\g(t),\dot{\g}(t))dt
\end{equation}
As classically the length of  such a curve is independent of the chosen  parametrization. In particular we can always assume that $\g$ is defined on $[0,1]$. Any absolutely continuous curve $\g$ defined on  some interval $ [a,b]$ and tangent to $D$ {\it a.e.} will be called {\bf a $D$-admissible curve}.
According to the classical result of accessibility of \cite{Su}, there exists a Stefan-Sussmann foliation $\cal F$ such that  each leaf $L$ of this foliation is  an equivalence relative of the following equivalence relation:\\
\begin{eqnarray}\label{Requiv}
x \equiv y \Longleftrightarrow   \textrm{ there exists a $D$-admissible  curve } \g:[0,1]\ap M \textrm{ with } \g(0)=x \textrm{ and } \g(1)=y.
\end{eqnarray}
 This foliation $\cal F$ will be called the {\bf accessibility foliation} of $D$. On each leaf $L$ of $\cal F$ we have a distance $d_L$ which is defined by
\begin{eqnarray}\label{dL}
d_L(x,y) =\inf\{ l(\g)\;,\;\g:[0,1]\ap L \textrm{ which is  $D$-admissible,  with } \g(0)=x \textrm{ and } \g(1)=y\}.
\end{eqnarray}

\begin{Rem}\label{horizontalcomp}${}$\\
It is clear that the  set $\cal A^D$ of $D$-admissible curves for $D$  satisfies the assumptions of Definition \ref{admissible}  and also  the compatibility  property (\ref{adcompatible}). It follows that $\cal A^D$ is a set of admissible curves which is   compatible with the accessibility foliation $\cal F$ of $D$.\\
\end{Rem}
When $D$  defines  a  Stefan-Sussmann foliation $\cal F$, then each leaf $L$ is an  equivalence class for the equivalence relation (\ref{Requiv}). Now a Finsler metric $F$ of class $C^k$ on $D$ induces on each leaf $L$
a Finsler metric $F_L$  of class $C^k$ and so also a distance $d_L$ on $L$ which is exactly the distance defined in (\ref{dL}).

\begin{Exs}\label{exsingfol}${}$

{\bf 1}. Consider any Finsler metric $\Phi$ on $M$ of class $C^k$ and $D$ a smooth  distribution. Then $\Phi$ induces a Finsler  metric $F$  on the distribution $D$ of class $C^k$.\\

{\bf 2} Let $\cal F$ be a Stefan-Sussmann  foliation on $M$ defined by an integrable distribution $D$. Assume that we have a Finsler metric  $F_L$ of class  $C^k$, $k\geq 1$, on  each leaf $L$ of $\cal F$. Then the "collection " $\{F_L, L \textrm{ leaf of } {\cal F}\}$  gives rise to a  Finsler metric on the distribution $D$ generated by $\cal F$ {\it again denote $F$} defined by $F(x,u)=F_L(x,u)$ if $L$ is the leaf of $\cal F$ through $x$. Indeed  the properties (ii) and (iii) of Definition \ref{Finslerdistribution} are clearly satisfied. Now any 
  curve $\g:]-\a,\a[\ap M$ of class $C^k$ which is tangent to $D$  must be contained in one leaf $L$ and  if $c(0)=x$, any vector field $X$ of ${\cal F}_x$ along $\g$ is also tangent to $L$. Therefore  the assumptions  of Definition \ref{Finslerdistribution} point (2) are also satisfied.\\

{\bf 3}.  Consider a foliated  anchored bundle $(A,M,\sharp)$ and ${\cal D}=\sharp({\cal C}^\infty_c(M,A))$. Any Finsler metric $F$ on $A$ induces 
on each leaf $L$ of $\cal F$   a Finsler metric denoted $F_L$ (see \cite{Pe} Example 8.1.1.(4)). The "collection " $\{F_L, L \textrm{ leaf of } {\cal F}\}$  defines a Finsler metric of class $C^\infty$ on $D$  according to the previous example.\\

{\bf 4} An important particular case of a foliated anchored bundle is the situation of a Poisson structure. Recall ({\it cf.} \cite{Ma} for example) that a Poisson structure on a manifold $M$ is the data of a Poisson  bracket $\{\;,\;\}$ on the algebra ${\cal C}^\infty(M)$. It is equivalent to the data of a morphism $P:T^*M\ap TM$ such that $<df,Pdg>=\{f,g\}$(for more explicit details see for example  \cite{Ma}). Then the range $D=P(T^*M)$ of $P$ is an integrable distribution and the associated foliation $\cal F$ is Stefan-Sussmann foliation. If we put a Finsler metric  $\Phi$  on $TM$ we obtain a Finsler metric $\Phi^*$ on $T^*M$ and as in the previous example, we get a Finsler metric $F_L$ on each leaf $L$ of $\cal F$

\end{Exs}

 Unfortunately Example \ref{exsingfol} {\bf 3} can not be generalized to the case of distribution $D=\sharp(A)$  where  $(A,M,\sharp)$ is an anchored bundle.  We only have
 
 \begin{Pro}\label{weakfinsleranchor}${}$\\
 Consider an  anchored bundle $(A,M,\sharp)$ and $D=\sharp(A)$. A Finsler metric $F$ on $A$ induces a map
 \begin{center}
  $F_D: D\ap \R^+$ by
${F}_D(x,v)=\inf\{F(x,u): v=\sharp(x,u), u\in A_x\}$ 
\end{center}
which is a weak Finsler metric on $D$.
\end{Pro}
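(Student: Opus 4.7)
The plan is to verify the two axioms of Definition \ref{Finslerdistribution}: the pointwise Minkowski norm property on each $D_x$, and the measurability and essential boundedness condition along admissible curves.

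Fix $x\in M$ and write $K_x=\ker(\sharp_x)\subset A_x$. The fibers of $\sharp_x:A_x\ap D_x$ are closed affine subspaces $u_0+K_x$. Since $F(x,\cdot)$ is a Minkowski norm on $A_x$, it is continuous, strictly convex and coercive (comparable to any Euclidean norm), so its restriction to each fiber attains its minimum at a unique point $u^*(x,v)$, and thus $F_D(x,v)=F(x,u^*(x,v))$ is well-defined. Positivity for $v\neq 0$, positive $1$-homogeneity (by rescaling the minimizer), and the triangle inequality (via near-minimizers) all follow routinely, so $F_D(x,\cdot)$ is at least a norm on $D_x$.

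To promote this to a Minkowski norm in the sense of Definition \ref{Minkowski}, I would use Legendre duality. Writing $F^*$ for the dual Minkowski norm of $F$ on $A_x^*$ and $F_D^*$ for the dual of $F_D(x,\cdot)$ on $D_x^*$, an immediate computation gives $F_D^*(\xi)=F^*(\xi\circ\sharp_x)$, i.e.\ $F_D^*$ is the pullback of $F^*$ along the injective linear map $\sharp_x^t:D_x^*\hookrightarrow A_x^*$. Since the Legendre dual of a Minkowski norm is again a Minkowski norm, and restricting a Minkowski norm to a linear subspace preserves both smoothness off the origin and the positive definite Hessian condition, $F_D^*$ is a Minkowski norm on $D_x^*$; dualizing once more, so is $F_D(x,\cdot)$ on $D_x$. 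Equivalently, the closed unit ball of $F_D(x,\cdot)$ equals $\sharp_x(B_F^x)$, the image of the strongly convex smooth unit ball $B_F^x$ of $F(x,\cdot)$ under the linear surjection $\sharp_x$.

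For condition (i), the regularity just obtained (continuity of $F_D$ on $D$, smoothness on $D\setminus 0$) is the key input. Along any $D$-admissible absolutely continuous curve $\gamma:[a,b]\ap M$ with $\dot\gamma(t)\in D_{\gamma(t)}$ \emph{a.e.}, the map $t\mapsto(\gamma(t),\dot\gamma(t))$ is measurable into $D$, and continuity of $F_D$ gives measurability of $t\mapsto F_D(\gamma(t),\dot\gamma(t))$. For essential boundedness, a measurable selection argument produces a measurable section $u(t)\in A_{\gamma(t)}$ over $\gamma$ with $\sharp u(t)=\dot\gamma(t)$, and one then uses the pointwise bound $F_D(\gamma(t),\dot\gamma(t))\leq F(\gamma(t),u(t))$ to transfer essential boundedness from $F$ to $F_D$. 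The main obstacle is the fiberwise Minkowski norm property above; this is exactly what the word \emph{weak} in the proposition reflects, since one should not expect regularity of $F_D$ transverse to points where the rank of $\sharp$ jumps. The duality route handles the fiberwise property cleanly, reducing it to preservation of the Minkowski property under Legendre duality and under restriction to a subspace; a direct approach via the implicit function theorem applied to the Lagrange condition $d_u F(x,\cdot)|_{K_x}=0$ is equally valid but more calculational.
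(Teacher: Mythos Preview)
Your Legendre-duality argument for the fiberwise Minkowski property is correct and is in fact more complete than the paper's treatment: the paper essentially just asserts that property (ii) holds once the infimum is attained, without justifying smoothness off zero or positive definiteness of the Hessian. Your observation that the unit ball of $F_D(x,\cdot)$ is $\sharp_x(B^x_F)$, hence $F_D^*=F^*\circ\sharp_x^t$, is the clean way to see this; restriction of a Minkowski norm to a subspace and biduality then finish the job.

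There is, however, a genuine gap in your treatment of condition (i). You invoke ``continuity of $F_D$ on $D$'' to obtain measurability of $t\mapsto F_D(\gamma(t),\dot\gamma(t))$, but the duality argument is purely fiberwise: it gives regularity of $F_D(x,\cdot)$ on each $D_x$, not joint regularity in $(x,v)$. Since $D$ is singular and the rank of $\sharp$ can jump along a $D$-admissible curve (as you yourself note at the end), this joint continuity may genuinely fail, and the composition argument breaks down. Your measurable selection only yields the inequality $F_D(\gamma(t),\dot\gamma(t))\le F(\gamma(t),u(t))$, which gives essential boundedness but not measurability. The paper closes this gap via Lemma \ref{Existenceconicalcontrol}: by decomposing $[0,1]$ into pieces on which $\ker\sharp$ has constant rank along $\gamma$, one builds a measurable section $u(t)$ that \emph{achieves} the infimum, so that $F_D(\gamma(t),\dot\gamma(t))=F(\gamma(t),u(t))$ with equality; measurability then follows from continuity of $F$ on $A$. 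You should replace the continuity-of-$F_D$ step by this minimizing-selection argument.
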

\begin{proof}
Note that we have ${F}_D(x,v)= F(x,u_0)$  for some $u_0$ in the fiber $p^{-1}(x)$.\\
 Indeed  set  $r_0= \inf\{F(x,u): v=\sharp(x,u), u\in A_x\}$. Now  if $\sharp(x,u_0)=v$ then $\sharp^{-1}(v)=u_0+\ker \sharp(x\,;)$ and  then $r_0=F(x,u_0)$ since $ \inf\{F(x,u)\;:\;u\in \ker \sharp(x,\;)\}=0$. 
 Therefore we get  $F_D(x,v)=F(x,u_0)$.
It follows that $F_D$ satisfies Property (i) and (ii)  of Definition \ref{Finslerdistribution}.  The following Lemma will complete the proof:

\begin{Lem}\label{Existenceconicalcontrol}${}$\\ 
Consider a smooth singular distribution $D=\sharp(A)$  where  $(A,M,\sharp)$ is an anchored bundle and $F$ a Finsler metric on $A$. For any $A$-admissible curve $\g:[0,1]\ap M$ there exists a measurable curve $({\g},u): [0,1]\ap A$ such that $\sharp(\g(t),u(t))=\dot{\g}(t)$ {\it a.e.} and  $F_D(\g(t),\dot{\g}(t)=F(\g(t), u(t))$
\end{Lem}

From this Lemma   since $F$ is a smooth Finsler metric and $\g$ is absolutely continuous it follows that $t\ap  F_D(\g(t),\dot{\g}(t))$ is a measurable map bounded {\it a.e.}.\\
\end{proof}
\begin{proof}[Proof of Lemma \ref{Existenceconicalcontrol}]${}$\\
As in  the proof of Theorem 5.3 in \cite{PeVa} we have a decomposition  $[0,1]=\dis\bigcup_{\theta\in \Theta}I_\theta$ into disjoints semi-interval $I_\theta\subset [0,1]$ such that the pull-back of $A$ over the restriction of $\g_\theta$ of $\g$ to $I_\theta$ is a trivial bundle $A_\theta$ and the  kernel $K_\theta$ of the anchor $\sharp$  in $A_\theta$ has a constant rank. It follows that we can find a  subbundle $H_\theta$ of $A_\theta$ such that  $A_\theta=K_\theta\oplus H_\theta$. Now since the restriction of $\sharp$ to $H_\theta$ is an isomorphism, there exists a measurable curve $u_\theta: I_\theta\ap H_\theta$ such that $\sharp(\g_\theta,u_\theta)=\dot{\g}_\theta$ on $I_\theta$ {\it a.e.}. Moreover,  as we have seen previously, we have $F(\g_\theta,u_\theta)=F_D(\g_\theta,\dot{\g}_\theta)$. In this way, as in \cite{PeVa}, we build a measurable section  $u$ of $A$ over $\g$ such that $\sharp(\g,u)=\dot{\g}$ { \it a.e.} and  $F(\g,u)=F_D(\g,\dot{\g})$.

\end{proof}

\begin{Rem}\label{extension}${}$\\ Consider a  Finsler metric $\Phi$ on $M$. We have seen that $\Phi$ induces a Finsler metric $\Phi_D$ on $D$. Conversely, one can ask when, starting by a Finsler metric $F$ on  a  smooth singular distribution $D$, we can extend the induced Finsler metric $F$  to a global Finsler metric $\Phi$ on $TM$. Of course if $D$ is  regular the answer is positive, but in   general this extension can not exist even if is  we are in the situation of Examples  \ref{exsingfol} {\bf 2} or {\bf 3} or in the context of Proposition \ref{weakfinsleranchor} (see for instance  \cite{PeVa} in the Riemannian context).
\end{Rem}

\subsection{Finsler entropy of a smooth  distribution}\label{entropyD}${}$\\
Let $D$  be a smooth  distribution on a compact manifold $M$. We provide  $D$ with a weak Finsler metric $F$.  We have  seen that the set $\cal A^D$ of $D$-admissible curves  is set of admissible curves ({\it cf.} Remark \ref{horizontalcomp}). We put
$${\cal A^D}_r=\{\g\in {\cal A^D} : l(\g)\leq r\}$$

It is easy to see that $\{{\cal A^D}_r\}_{r\in \R^+}$ is a filtration of $\cal A^D$  {\it i.e.} it satisfies the assumptions of Definition \ref{filtration}. Given any distance $d$ on $M$  and $\{{\cal A^D}_r\}_{r\in \R^+}$ we can define a set ${\cal M}=\{d_r\}_{r\in \R^+}$ of increasing distances. Then we have
\begin{Def}\label{entropydistribution}${}$\\
The entropy $h({\cal A^D},M,d)$ is called the Finsler entropy of $D$ and  is denoted $h(D,M,F,d)$
\end{Def}

 Now, {\bf if $\cal F$ is a Stefan-Sussmann foliation}, given a Finsler metric on each leaf, we get a Finsler metric on the distribution $D$ associated to $\cal F$

\begin{Def}\label{entropycalF}${}$\\
The entropy $h({\cal F},M, F)$ of the foliation   $\cal F$ is the entropy $h(D,M,F)$ of the distribution $D$ associated to $\cal F$.\\
\end{Def}

The entropy of a smooth distribution have the following properties:

\begin{Pro}\label{independentfinsler}${}$\\
Let $\Phi$ a Finsler metric on a compact manifold $M$.  
\begin{enumerate}
\item  The value of $h(D,M,F,d_\Phi)$ is invariant by bi-Lipschitz homeomorphism of the metric space $(M,d_\Phi)$ where $d_\Phi$ is the distance associated to $\Phi$. Moreover,  the value of $h(D,M,F,d_\Phi)$ is independent of the choice of such a Finsler metric $\Phi$.
\item for $\l>0$, we have $h(D,M, \l.F,d)=\l^{-1}. h(D,M, F,d)$
\item   Assume that we have two weak Finsler metric $F$ and $F'$ on $D$ such that
$$ F'\leq C.F$$
for some $C>0$ . Then we have  $$h(D,M,F',d_\Phi)\geq C^{-1} h(D,M,F,d_\Phi).$$
\item  Let $\cal F$ be the accessibility  foliation of $D$. Then we have  $h(D,M, F,d_\Phi)=h({\cal M},{\cal F},M,d_\Phi)$ \\({\it cf.} subsection \ref{entropiefol}). 
\end{enumerate}
\end{Pro}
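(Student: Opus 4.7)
The plan is to treat the four assertions in the order (2), (3), (1), (4), since the scaling formula of (2) and the comparison of (3) provide the tools needed for (1), while (4) amounts to unwinding the definitions.

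For (2), I would exploit the identity $l_{\lambda F}(\gamma) = \lambda\, l_F(\gamma)$ satisfied by every $D$-admissible curve, which gives the exact equality of filtrations $\mathcal{A}^{\lambda F}_r = \mathcal{A}^F_{r/\lambda}$ and hence $d^{\lambda F}_r = d^F_{r/\lambda}$ for every $r > 0$. Substituting this into Definition \ref{entropy} and performing the change of variable $r = \lambda s$ in the $\limsup$ produces the prefactor $\lambda^{-1}$.

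For (3), the pointwise bound $F' \leq C F$ gives $l_{F'}(\gamma) \leq C\, l_F(\gamma)$, hence the inclusion of filtrations $\mathcal{A}^F_{r/C} \subseteq \mathcal{A}^{F'}_r$ for every $r$. The key step is to prove $d^{F'}_r \geq d^F_{r/C}$; once this is in hand, $N(d^{F'}_r, \epsilon) \geq N(d^F_{r/C}, \epsilon)$ and the change of variable $s = r/C$ in the $\limsup$ deliver $h(D, M, F', d_\Phi) \geq C^{-1} h(D, M, F, d_\Phi)$. I would establish the distance inequality by a cross-filtration adaptation of the monotonicity argument of Proposition \ref{popdr}: given $\gamma_n \in \mathcal{A}^F_{r/C}(x)$ almost realizing $\delta^F_{r/C}(x,y)$ and $\mu_p \in \mathcal{A}^{F'}_r(y)$ almost realizing $\bar d(\gamma_n, \mathcal{A}^{F'}_r(y))$, I would truncate $\mu_p$ at $F$-length $r/C$ (its $F'$-length remains at most $r$ by the pointwise bound) and reparametrize to obtain $\hat\mu_p \in \mathcal{A}^F_{r/C}(y)$, then apply the same chain of inequalities as in Proposition \ref{popdr} to conclude $\delta^F_{r/C}(x,y) \leq \delta^{F'}_r(x,y)$. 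This cross-filtration step is the main technical obstacle, since it requires verifying that the truncation of a curve in $\mathcal{A}^{F'}_r$ at a chosen $F$-length threshold yields a curve in $\mathcal{A}^F_{r/C}$ while preserving the comparison of $\bar d$-distances used in Proposition \ref{popdr}.

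For (1), any two Finsler metrics $\Phi, \Phi'$ on a compact manifold induce bi-Lipschitz equivalent distances: the ratio $\Phi/\Phi'$ is continuous and positively $1$-homogeneous on the compact unit sphere bundle, hence globally bounded above and below by positive constants, and integration along curves promotes this to $c_1 d_{\Phi'} \leq d_\Phi \leq c_2 d_{\Phi'}$. Substituting these inequalities into $\bar d(\gamma,\mu) = \sup_t d(\gamma(t),\mu(t))$ and into the definitions of $\delta_r$ and $d_r$ shows that the resulting families of distances are bi-Lipschitz equivalent uniformly in $r$, and two applications of Remark \ref{compactX}(2) give equality of entropies; the same argument handles invariance under any bi-Lipschitz self-homeomorphism $\phi$ of $(M, d_\Phi)$, since $d_\Phi \circ (\phi \times \phi)$ is bi-Lipschitz equivalent to $d_\Phi$. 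Finally (4) follows tautologically: by the accessibility theorem recalled at (\ref{Requiv}) every $D$-admissible curve lies in a single leaf of $\mathcal{F}$, so $\mathcal{A}^D$ satisfies the compatibility property (\ref{adcompatible}); the filtration by $F$-length and the distance $d_\Phi$ are common to the two definitions of $h(D,M,F,d_\Phi)$ and $h(\mathcal{M},\mathcal{F},M,d_\Phi)$, so the two entropies coincide by construction.
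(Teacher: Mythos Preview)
Your proposal is correct and, for points (1), (2) and (4), follows the same lines as the paper: bi-Lipschitz equivalence of Finsler distances on a compact manifold plus Remark~\ref{compactX}(2) for (1), the exact rescaling $\mathcal{A}^{\lambda F}_r=\mathcal{A}^F_{r/\lambda}$ for (2) (the paper simply cites \cite{Zu} here), and the compatibility property~(\ref{adcompatible}) for (4).

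For point (3) your route differs from the paper's. Both aim at the key inequality $d_r\le d'_{Cr}$ (equivalently $\delta^F_{r/C}\le\delta^{F'}_r$). The paper argues directly from the inclusion $\mathcal{A}^D_r\subset\mathcal{A}'^D_{Cr}$ and writes a chain of inequalities for $\bar\delta$. You instead imitate the monotonicity proof of Proposition~\ref{popdr}: pick $\gamma_n\in\mathcal{A}^F_{r/C}(x)$ nearly realizing $\delta^F_{r/C}(x,y)$, pick $\mu_p\in\mathcal{A}^{F'}_r(y)$ nearly realizing $\bar d(\gamma_n,\mathcal{A}^{F'}_r(y))$, and truncate $\mu_p$ at $F$-length $r/C$ to land in $\mathcal{A}^F_{r/C}(y)$. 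Your approach is the more robust one: the bare inclusion $\mathcal{A}^D_r\subset\mathcal{A}'^D_{Cr}$ pulls the two monotonicities of $\bar\delta(X,Y)=\sup_{\gamma\in X}\bar d(\gamma,Y)$ in opposite directions (increasing in $X$, decreasing in $Y$), so a truncation step of exactly the kind you describe is what is really doing the work, just as in Proposition~\ref{popdr}. One small remark: your parenthetical ``its $F'$-length remains at most $r$'' is irrelevant to landing $\hat\mu_p$ in $\mathcal{A}^F_{r/C}(y)$ (that comes from the truncation threshold itself); what matters is that any $D$-admissible curve has finite $F$-length by the boundedness clause in Definition~\ref{Finslerdistribution}(i), so the truncation time is well defined.
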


{\bf We have the first following results about the entropy of a smooth distribution:}\\


\begin{The}\label{Zung}${}$\\
Let $F$ be a Finsler metric on an   anchored bundle $(A,M,\sharp)$ and $D=\sharp(A)$. If $F_D$ is the weak Finsler metric induced on $D$ by $F$ ({\it cf.} Proposition \ref{weakfinsleranchor}) then we have:
$$h(A,M,\sharp,F,\Phi)=h(D,M,F_D)$$
(for the definition of $h(A,M,\sharp,F,\Phi)$ see subsection \ref{anchoredentrypy}). \\ 
\end{The}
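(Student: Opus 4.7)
The plan is to realize both entropies as entropies of filtrations on the common set $\mathcal{A}^D$ of $D$-admissible curves and to compare the associated families of distances. First, Lemma \ref{Existenceconicalcontrol} lets me identify the Zung filtration $\mathcal{A}^\sharp_r=\{\gamma:\exists u\text{ with }\sharp(\gamma,u)=\dot\gamma,\;F(\gamma,u)\leq r\text{ a.e.}\}$ on $A$-paths with the speed-bounded filtration on $\mathcal{A}^D$, namely $\{\gamma\in\mathcal{A}^D:F_D(\gamma,\dot\gamma)\leq r\text{ a.e.}\}$: the lemma provides an optimal measurable lift $u$ with $F(\gamma,u)=F_D(\gamma,\dot\gamma)$, so the Zung bound $F(\gamma,u)\leq r$ is equivalent to the pointwise $F_D$-speed bound. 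This reduces the theorem to comparing, on $\mathcal{A}^D$, the speed-bounded filtration $\mathcal{A}^\sharp_r$ with the length-bounded filtration $\mathcal{A}^D_r=\{\gamma:l(\gamma)\leq r\}$ (length taken with respect to $F_D$). The key structural observation is that $\mathcal{A}^\sharp_r\subseteq\mathcal{A}^D_r$, and $\mathcal{A}^D_r$ is the monotone-reparametrization closure of $\mathcal{A}^\sharp_r$: any $\gamma\in\mathcal{A}^D_r(x)$ with length $L\leq r$ decomposes as $\gamma=\hat\gamma\circ\tau$, where $\hat\gamma\in\mathcal{A}^\sharp_r(x)$ is the constant-$F_D$-speed reparametrization of $\gamma$ on $[0,1]$ (with speed $L\leq r$, lifted to an $A$-control via Lemma \ref{Existenceconicalcontrol}) and $\tau:[0,1]\to[0,1]$ is the normalized arc-length function of $\gamma$.

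For the inequality $h(A,M,\sharp,F,\Phi)\geq h(D,M,F_D)$, I would prove $d_r^D\leq d_r^\sharp$ by a reparametrization matching. Given $\gamma=\hat\gamma\circ\tau\in\mathcal{A}^D_r(x)$ and $\epsilon>0$, pick $\nu\in\mathcal{A}^\sharp_r(y)$ realizing $\bar d(\hat\gamma,\nu)\leq\delta_r^\sharp(x,y)+\epsilon$; then $\mu:=\nu\circ\tau$ belongs to $\mathcal{A}^D_r(y)$ because monotone reparametrization preserves length, and
\[
\bar d(\gamma,\mu)=\sup_{t\in[0,1]} d(\hat\gamma(\tau(t)),\nu(\tau(t)))\leq\bar d(\hat\gamma,\nu).
\]
Taking infimum over $\mu$ and supremum over $\gamma$, together with the symmetric estimate for $\delta_r^D(y,x)$, gives $d_r^D\leq d_r^\sharp$, so Remark \ref{compactX}(2) yields the claimed entropy inequality; this is the "easy" half and exploits the fact that $\mathcal{A}^D_r$ is closed under the monotone reparametrizations coming from $\mathcal{A}^\sharp_r$.

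For the reverse inequality $h(A,M,\sharp,F,\Phi)\leq h(D,M,F_D)$, the argument is dual but more delicate. Given $\gamma\in\mathcal{A}^\sharp_r(x)$ and an approximant $\mu^*\in\mathcal{A}^D_r(y)$ with $\bar d(\gamma,\mu^*)\leq\delta_r^D(x,y)+\epsilon$, the aim is to produce $\mu\in\mathcal{A}^\sharp_r(y)$ close to $\gamma$. The natural construction reparametrizes $\mu^*$ so that its $F_D$-arc-length at time $t$ equals $\frac{l(\mu^*)}{l(\gamma)}\sigma_\gamma(t)$, forcing the $F_D$-speed of the new $\mu$ to equal $\frac{l(\mu^*)}{l(\gamma)}F_D(\gamma,\dot\gamma)(t)\leq r$; when $l(\mu^*)>l(\gamma)$ one first truncates $\mu^*$ to length $l(\gamma)$. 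The main obstacle is bounding the distortion $|\bar d(\gamma,\mu)-\bar d(\gamma,\mu^*)|$ introduced by this reparametrization, since $\mu^*$ and $\mu$ are in the same orbit of reparametrizations but may differ significantly in sup norm. The estimate will use that $\gamma$ is $Cr$-Lipschitz in $(M,d_\Phi)$ (via a bi-Lipschitz comparison of $F_D$ and $\Phi|_D$ on the compact manifold $M$), so the distortion is at most $Cr\|\tau-\mathrm{id}\|_\infty$ for the intermediate reparametrization $\tau$, and the key step is to check that this correction is absorbed in the $\limsup_{r\to\infty}\frac{1}{r}\ln N(d_r,\epsilon)$ defining the entropy, yielding the converse inequality and completing the proof.
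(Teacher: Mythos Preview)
Your identification of the Zung filtration with the speed-bounded filtration $\mathcal{N}_r=\{\gamma:F_D(\gamma,\dot\gamma)\leq r\text{ a.e.}\}$ via Lemma~\ref{Existenceconicalcontrol} is exactly what the paper does, and your reparametrization argument for $d_r^D\leq d_r^\sharp$ is correct. The divergence from the paper, and the gap, is in the reverse direction.

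The paper does not prove an entropy inequality for the reverse direction; it proves \emph{equality of the distances themselves}, $d_r^D=d_r^\sharp$, via Lemma~\ref{distN}. The point you are missing is that the very same reparametrization trick you used for the easy half works symmetrically, because $\mathcal{A}^D_r$ is invariant under \emph{all} monotone surjective reparametrizations $\tau:[0,1]\to[0,1]$. Concretely, if $\gamma=\hat\gamma\circ\tau$ with $\hat\gamma\in\mathcal{N}_r(x)$, then $\mu\mapsto\mu\circ\tau$ is a bijection of $\mathcal{A}^D_r(y)$ onto itself (when $\tau$ is a bijection; the degenerate case is handled by approximation), and $\bar d(\gamma,\mu\circ\tau)=\bar d(\hat\gamma,\mu)$ since $\tau$ is surjective. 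Hence $\bar d(\gamma,\mathcal{A}^D_r(y))=\bar d(\hat\gamma,\mathcal{A}^D_r(y))$, and the sup over $\gamma\in\mathcal{A}^D_r(x)$ reduces to a sup over $\hat\gamma\in\mathcal{N}_r(x)$. Applying the same observation on the $y$-side yields $\delta_r^D(x,y)=\sup_{\hat\gamma\in\mathcal{N}_r(x)}\bar d(\hat\gamma,\mathcal{N}_r(y))=\delta_r^\sharp(x,y)$. No entropy-level argument is needed.

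Your proposed route for the reverse inequality is both unnecessary and, as stated, does not close. The distortion bound $Cr\|\tau-\mathrm{id}\|_\infty$ is of no use: there is no reason for $\|\tau-\mathrm{id}\|_\infty$ to be small (the arc-length profiles of $\gamma$ and $\mu^*$ may be completely unrelated), so the correction is generically of order $r$. An additive error of size $O(r)$ in $d_r^\sharp$ versus $d_r^D$ does \emph{not} get absorbed in $\limsup_{r\to\infty}\frac{1}{r}\ln N(d_r,\epsilon)$; it could in principle change separated sets arbitrarily. Replace this with the equality $d_r^D=d_r^\sharp$ from Lemma~\ref{distN} and the proof is complete.
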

\begin{The}\label{h=0} {\bf or  Theorem 1} in the introduction${}$\\
Assume that the distribution $D$ is  controllable {\it i.e.} the accessiblity foliation $\cal F$ has only one leaf equal to $M$. Then the entropy $h(D,M, F)$ is zero.\\
\end{The}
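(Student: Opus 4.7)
The plan is to show that controllability forces the distances $d_r$ of the filtration $\{{\cal A}^D_r\}$ to be uniformly dominated, for every $r\geq R$, by a fixed distance on $M$, so that $M(d_r,\e)$ stays bounded as $r\to\infty$ and the entropy vanishes. Because $D$ is controllable,
$d_D(x,y):=\inf\{l(\g)\,:\,\g\in{\cal A}^D,\ \g(0)=x,\ \g(1)=y\}$
is finite for all $x,y\in M$ and coincides with the intrinsic distance on the single accessibility leaf $M$; its symmetrisation $\tilde d_D(x,y):=d_D(x,y)+d_D(y,x)$ is a genuine distance inducing the manifold topology on the compact $M$, so $(M,\tilde d_D)$ is compact of finite diameter $R$. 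Moreover, $F$ and the restriction $\Phi_{|D}$ are two Minkowski norms on each fibre of $D$, hence by compactness of $M$ there is $C\geq 1$ with $C^{-1}\Phi_{|D}\leq F\leq C\,\Phi_{|D}$, yielding $l_\Phi(\s)\leq C\,l(\s)$ for every $D$-admissible $\s$.

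The main step is a time-delay matching. Fix $r\geq R$, points $x,y\in M$ and a curve $\g\in{\cal A}^D_r(x)$ parametrised on $[0,1]$ of $F$-length $\ell\leq r$; denote by $\bar\g:[0,\ell]\to M$ its $F$-arclength reparametrisation and by $\s:[0,1]\to[0,\ell]$ the cumulative $F$-arclength of $\g$. For any $\e'>0$, pick a $D$-admissible $\a$ from $y$ to $x$ with $l(\a)\leq d_D(x,y)+\e'$, choose $\t\in(0,1]$ such that $\s(\t)=\max\{0,\,\ell+d_D(x,y)-r\}+\e'$ (which is $\leq d_D(x,y)+\e'$ since $\ell\leq r$), and define
$$\m(t)=\a(t/\t)\ \text{on $[0,\t]$,}\qquad \m(t)=\bar\g\bigl(\s(t)-\s(\t)\bigr)\ \text{on $[\t,1]$.}$$
Then $\m$ is $D$-admissible with $\m(0)=y$, continuous at $\t$ since $\a(1)=x=\bar\g(0)$, and a direct length computation gives $l(\m)=l(\a)+\ell-\s(\t)\leq r$, so $\m\in{\cal A}^D_r(y)$.

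A pointwise estimate then yields $\bar d(\g,\m)\leq 2C\,d_D(x,y)+O(\e')$. On $[0,\t]$, both $\g(t)$ and $\m(t)$ lie within $\Phi$-distance $C\,d_D(x,y)+O(\e')$ of $x$, since $l_\Phi(\a)\leq C\,l(\a)$ and $\s(t)\leq\s(\t)\leq d_D(x,y)+\e'$; on $[\t,1]$, $\g(t)$ and $\m(t)$ are two points of $\bar\g$ at $F$-arclength $\s(\t)$ apart, hence at $\Phi$-distance bounded by $C(d_D(x,y)+\e')$. Taking $\sup_\g \inf_\m$ and letting $\e'\to 0$ gives $\d_r(x,y)\leq 2C\,d_D(x,y)$, and the symmetric construction gives $\d_r(y,x)\leq 2C\,d_D(y,x)$, so $d_r(x,y)\leq 2C\,\tilde d_D(x,y)$ for every $r\geq R$.

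Consequently $B_{\tilde d_D}(x,\e/(2C))\subset B_{d_r}(x,\e)$ gives $M(d_r,\e)\leq M(\tilde d_D,\e/(2C))$, a finite constant independent of $r$ by compactness of $(M,\tilde d_D)$. Therefore $\frac{1}{r}\ln M(d_r,\e)\to 0$ as $r\to\infty$ for every $\e>0$, whence $h(D,M,F)=0$. The main difficulty is precisely the calibration of $\t$: one must produce $\m$ inside ${\cal A}^D_r(y)$ (not merely ${\cal A}^D_{r+R}(y)$), forcing $\m$ to sacrifice a terminal arc of $\g$ of $F$-length $\s(\t)$ as payment for the initial detour $\a$; the choice $\s(\t)=\max\{0,\ell+d_D(x,y)-r\}+\e'$ is the minimal such sacrifice and gives the sharp linear estimate in $d_D(x,y)$.
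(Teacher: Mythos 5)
Your argument is correct and, at its core, uses the same key idea as the paper: because $D$ is controllable, one can prepend a cheap connecting curve $\alpha$ from $y$ to $x$ to (a truncation of) any $\gamma\in{\cal A}^D_r(x)$ and obtain a competitor in ${\cal A}^D_r(y)$ that stays uniformly $\bar d$-close to $\gamma$, so that $d_r$ is eventually dominated by a fixed distance and $M(d_r,\epsilon)$ stops growing. The routes differ in the bookkeeping. The paper first invokes Lemma~\ref{distN} to replace the length filtration ${\cal A}^D_r$ by the speed filtration ${\cal N}_r$ (curves with $F(\dot\gamma)\le r$ a.e.), then measures the sup-distance with the intrinsic metric $d_M$; with that normalisation the concatenation is just a time-shift of $\gamma$ with no calibration needed, and the estimate comes out sharp ($d'_r=2d_M$). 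You stay with the length filtration and the ambient $d_\Phi$, which forces the careful choice $\sigma(\tau)=\max\{0,\ell+d_D-r\}+\epsilon'$ to keep $\mu$ inside the $F$-length budget $r$, and you pay a comparison constant $C$ between $F$ and $\Phi_{|D}$; this is slightly heavier but bypasses Lemma~\ref{distN} entirely. One point where your write-up is actually more careful than the paper: you explicitly note that the sub-Finsler length distance $d_D$ need not be symmetric and pass to its symmetrisation $\tilde d_D$, whereas the paper treats $d_M$ as a genuine distance without comment. Two small caveats worth tightening: the curve $\alpha$ goes from $y$ to $x$, so its length should be controlled by $d_D(y,x)$ rather than $d_D(x,y)$ (harmless after symmetrisation); and the comparability $C^{-1}\Phi_{|D}\le F\le C\,\Phi_{|D}$ is not automatic from compactness when $D$ is singular (the fibrewise sphere bundle of $D$ is then not compact), though the paper's own step $d_\Phi\le d_M$ silently assumes the analogous inequality $\Phi_{|D}\le F$, so this is an issue shared by both arguments rather than a gap you introduced.
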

\begin{Rem}\label{refZu}${}$\\
 Let $D$ be a regular distribution on $M$ and $g_D$ be the  Riemannian metric on $D$ induced  by  a given Riemannian metric $g$ on $M$. Then the corresponding entropy $h(A,M,\sharp,F)$ is exactly the entropy of the distribution $D$ defined in \cite{Bis} ({\it cf.} subsection \ref{anchoredentrypy}). Therefore Theorem \ref{Zung} and Theorem \ref{h=0} imply that if $D$ is a contact distribution  this entropy is zero as it is already proved in \cite{Bis}. More generally Theorem \ref{Zung} and Theorem \ref{h=0} also imply Theorem 3.4 and Theorem 3.6 which are particular cases where the distribution $D=\sharp(A)$ is controllable.
 \end{Rem}

 According to the previous  results we have the following consequences:
\begin{Obs}\label{propentrop}\end{Obs}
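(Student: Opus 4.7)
The strategy is to show that controllability forces the family $\{d_r\}$ to be uniformly dominated by a fixed bounded distance on $M$, eliminating exponential growth of the covering numbers. Let $d_L : M \times M \to \R^+$ denote the intrinsic Finsler distance defined by $F$-length of $D$-admissible curves on the (single) leaf $L = M$. Since $D$ is controllable and $M$ compact, $\Delta := \sup_{x,y} d_L(x,y) < \infty$. Continuity and positivity of $\Phi$ and $F$ on compact $M$ yield a constant $C > 0$ with $\Phi(v) \leq C F(v)$ for all $v \in D$, so the $\Phi$-length of any $D$-admissible curve is at most $C$ times its $F$-length; in particular $d_\Phi(x,y) \leq C\, d_L(x,y)$ for all $x, y \in M$.

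The key estimate is the bound $d_r(x,y) \leq 6C\, d_L(x,y)$ valid for every $r > 0$. Given $\gamma \in {\cal A}_r(x)$ parametrized on $[0,1]$ with constant $F$-speed $L := l(\gamma) \leq r$, I pick a $D$-admissible curve $\sigma$ from $y$ to $x$ of $F$-length arbitrarily close to $d_L(y,x)$, and define $\mu \in {\cal A}_r(y)$ as the constant-$F$-speed reparametrization on $[0,1]$ of the concatenation $\sigma \star \gamma$, truncating the $\gamma$-portion if necessary so that $l(\mu) \leq r$. For $t$ in the initial interval during which $\mu$ traverses $\sigma$, $\gamma(t)$ lies within $F$-arclength $\leq d_L(y,x)$ of $x$ and $\mu(t)$ lies within $F$-arclength $\leq d_L(y,x)$ of $y$; the triangle inequality for $d_\Phi$ combined with $d_\Phi \leq C d_L$ gives $d_\Phi(\gamma(t),\mu(t)) \leq 3C\, d_L(x,y)$. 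For $t$ in the remaining interval, $\mu(t) = \gamma(t')$ for some $t'$ with $|t - t'|\, L \leq d_L(y,x)$ in $F$-arclength along $\gamma$, so $d_\Phi(\gamma(t),\mu(t)) \leq C\, d_L(x,y)$. Taking the supremum gives $\delta_r(x,y) \leq 3C\, d_L(y,x)$, and by symmetry $d_r(x,y) \leq 6C\, d_L(x,y)$.

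To conclude I invoke the classical fact that for a smooth controllable distribution on a compact manifold the intrinsic distance $d_L$ is uniformly continuous with respect to $d_\Phi$: the sub-Finsler topology coincides with the manifold topology, by the Sussmann orbit theorem combined with continuous dependence of flows on initial conditions. Thus for every $\epsilon > 0$ there exists $\delta > 0$ such that $d_\Phi(x,y) < \delta \Rightarrow d_L(x,y) < \epsilon/(6C)$, and hence $d_r(x,y) < \epsilon$ whenever $d_\Phi(x,y) < \delta$, uniformly in $r$. Consequently any $(d_r,\epsilon)$-separated subset of $M$ is $(d_\Phi,\delta)$-separated, so $N(d_r,\epsilon) \leq N(d_\Phi,\delta) < \infty$ by compactness of $(M,d_\Phi)$; this yields $\limsup_{r \to \infty} \frac{1}{r}\ln N(d_r,\epsilon) = 0$ for every $\epsilon > 0$, whence $h(D,M,F,d_\Phi) = 0$ by formula~(\ref{entropyN}). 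The main difficulty is the precise tuning of parametrizations in the second step so that corresponding points of $\gamma$ and $\mu$ differ by at most $d_L(y,x)$ in $F$-arclength while respecting the constraint $l(\mu) \leq r$; the appeal to uniform $d_L$--$d_\Phi$ continuity is a secondary but essential use of both smoothness and controllability.
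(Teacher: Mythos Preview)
Your proposal proves Theorem~\ref{h=0} (Theorem~1), whereas Observations~\ref{propentrop} is a list of consequences carrying no separate proof in the paper. Compared with the paper's argument for Theorem~\ref{h=0}, both rest on the same concatenation idea --- precede $\gamma$ by a short $D$-admissible curve from $y$ to $x$ and then track $\gamma$ with a time shift --- but the paper organizes the estimate differently. Rather than bounding $d_r$ by a multiple of $d_L$ and then invoking uniform $d_L$--$d_\Phi$ continuity, the paper replaces the ambient distance $d_\Phi$ by the intrinsic $d_M\,(=d_L)$ from the start, builds the auxiliary increasing family $\{d'_r\}$ with $d_r\leq d'_r$, and shows via the concatenation that $d'_r\equiv 2d_M$ for every $r$. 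A constant family has zero entropy, and the domination $d_r\le d'_r$ (Remark~\ref{compactX}) finishes. This sidesteps your final uniform-continuity step entirely; what your route gains in exchange is a direct bound on the original family $\{d_r\}$.

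Two of your steps need more care. The inequality $\Phi(v)\leq C\,F(v)$ on $D$ does not follow from compactness of $M$ alone: by Definition~\ref{Finslerdistribution}, $F$ is only a Minkowski norm on each fibre $D_x$ with weak regularity along tangent curves, and for singular $D$ the set $\{v\in D:F(v)=1\}$ is typically non-compact, so the ratio $\Phi/F$ can be unbounded. (The paper's bare assertion $d_\Phi\leq d_M$ conceals the same hypothesis.) Second, the coincidence of the sub-Finsler and manifold topologies is correct for smooth controllable distributions but is not an immediate corollary of Sussmann's orbit theorem; one needs an additional continuity-of-endpoint argument, and for merely weak Finsler $F$ it may fail. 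The paper's organization avoids stating this explicitly, though finiteness of the covering numbers on $(M,d_M)$, needed so that $h(\mathcal{M}',M,d_M)=0$ rather than $+\infty$, ultimately rests on the same fact.
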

\begin{description}
\item[(1)]  The {\it Finsler entropy of a smooth singular distribution  $D$ on $M$ is independent of $\Phi$} and will be denoted $h(D,M, F)$.\\

\item [(2)] if $F$ and $F'$ are two (weak) Finsler metrics on $D$ which are  {\bf equivalent } in the sense that there exists a constant $C>0$ such that 
$$C^{-1}.F\leq F'\leq C. F$$
then we have 
$$ C^{-1}. h(D,M,F,d_\Phi)\leq h(D,M,F',d_\Phi)\leq C.h(D,M,F,d_\Phi)$$
Thus the fact that  the entropy $h(D,M, F)$ is zero, finite or eventually infinite\footnote{we do not know if the Finsler entropy of a smooth distribution is always finite} depends only of the equivalence class of the Finlser metric $F$ on $D$. In particular, if  the Finsler metric $F$ is induced by $\Phi$, then  the property  to be   zero, finite or infinite of the corresponding entropy    is an intrinsic property  of $D$. This entropy will be called the geometric entropy of $D$.\\

\item[(3)]  if $F$ and $F'$ are two(weak) Finsler metrics on $D$ which are not equivalent in the sense of {\bf (2)} then $h(D,M,F)$ can be zero and $h(D,M,F')$  can strictly positive. This situation can be illustrated by the following situation.

 From \cite{SYZ} there exists a vector field $Z$ on a compact manifold $M$ and positive smooth  functions $\varphi$ and $\varphi'$ on $M$ such that $\varphi(x_0)=\varphi(x_0)=0$ for some point $x_0\in M $, strictly positive for $x\not=x_0$, which are equal on the complementary of a small neighborhood of $x_0$ and such that the topological entropy of the flow of $\varphi.Z$ and  $\varphi'.Z$ is zero and strictly positive respectively. Consider the trivial bundle  $A=M\times \R$ over $M$ , 
$\sharp: A\ap TM$  and $\sharp':A\ap TM$ the anchor characterized by $\sharp(e)=\varphi.Z$ and $\sharp'(e)=\varphi.Z_x$ respectively, where $e: M\ap A$ is the canonical section whose value is the canonical basis of $\R$. Then the range of $\sharp$ and $\sharp'$
is the same  distribution $D$ generated by  $\varphi.Z$ or  $\varphi'.Z$ and $x_0$ is the unique singularity. Now, if  we put on $A$ the Finsler metric given by the canonical absolute value $|\;|$ on $\R$, we get on $D$ two weak Finsler metric $F_D$ and $F'_D$  ({\it cf.} Proposition \ref{weakfinsleranchor}). These Finsler metric     cannot be compared on a small neighborhood of $x_0$. On the other hand From Theorem  3.7 in \cite{Zu} and Theorem \ref{Zung} the entropy $h(D,M,F_D)$ is twice the entropy of the flow of $\varphi.Z$ and $h(D,M,F_D)$ is twice the entropy of the flow of $\varphi'.Z$ respectively.\\

\item[(4)] The Finsler entropy of a distribution $D$ is nothing but else the entropy of the accessibility foliation of $\cal F$ associated to the accessible set of $D$-curves. However in general this entropy  {\bf is different from}   the entropy of the distribution defined  this accessibility foliation $\cal F$ even if we  a Finsler extension  $\hat{F}$ to  $\cal F$ of the given Finsler metric $F$ on $D$. Of course when $D$ is integrable and $\cal F$ the associated accessibility foliation  of $D$ and the previous entropy is exactly the entropy of $\cal F$.\\

\item[(5)] According to Theorem \ref{Zung} and section \ref{folentrop} the geometrical entropy of a regular distribution is exactly the geometric entropy of a foliation as previously defined in {\bf 2} where we take for $\Phi$ a Riemannian metric on the manifold and  and for $F$ the Riemannian metric induced on  tangent bundle of the foliation. In particular in this case this entropy either zero or finite. On the other hand if $\cal F$ is the foliation associated to a Poisson structure on $M$ ({\it cf.} Example \ref{exsingfol} {\bf 4}), again from Theorem \ref{Zung} the geometric entropy of $\cal F$ is exactly the entropy of $\cal F$ as defined in \cite{Zu}. In particular the reader will find in this paper an example of such a foliation  whose entropy is finite but not zero.\\

\item[(6)] We can define a notions of entropy for a smooth distribution $D$ on a non compact manifold $M$. Let $K$ be a relatively compact subset of complete Finsler manifold $(M,g)$. As previously, given a (weak) Finsler metric on $D$, we can define a family ${\cal M}_K=\{d_r\}_{r\in \R^+}$ on  in the closure $\bar{K}$  associated to the set of absolutely curves tangent to $D$  contained in $\bar{K}$ with length at most $r$. Therefore the entropy $h(D,K,F,\Phi)=h({\cal M}_K,K,d_\Phi)$ is well defined (see Remark \ref{compactX} Point (1)). More generally if we  consider a sequence 
$$K_1\subset K_2\subset\cdots\subset K_n\subset\cdots \subset M$$
of relatively compact subsets such that $M=\dis\bigcup_{n\in \N}K_n$. Then given a (weak) Finsler metric on a smooth distribution $D$ and a Finsler metric  on $M$ we have
$$h(D,M,F,d_\Phi)=\dis\lim_{n\ap \infty}h(D,K_n,F,(d_\Phi)).$$ It is easy to see that this definition does not depends of the choice of the sequence $(K_n)$.\\
\end{description}
\bigskip

{\bf We end this section by the proof of the announced results}

\begin{proof}[Proof of Proposition \ref{independentfinsler}]${}$\\
\noindent { \it Point (1)}: consider a metric $d'$ on $M$ which is bi-Lipschitz equivalent to $d_\Phi$.  There exists a constant $C>0$ such that $\dis\frac{1}{C}d\leq d'\leq Cd$.  If $\{d'_r\}_{r\in \R^+}$ is the family of distances associated  $\{{\cal A^D}_r\}_{r\in \R^+}$ relative to $d'$ clearly we also have $\dis\frac{1}{C}d_r\leq d'_r\leq Cd_r$ for any $r\in \R^+$. 
Then the first property  is just an application of    Remark \ref{compactX} Point (2). Now since $M$ is compact any two Finsler metric $\Phi$ and $\Phi'$ the associated distances $d_\Phi$ and $d_{\Phi'}$ are bi-Lipschitz equivalent.\\


 \noindent  {\it Point  (2)}:   we can use the same arguments used in the proof of Proposition 3.1 in \cite{Zu}.\\

 \noindent  {\it Point  (3)}: Assume that we have $F'\leq C. F$.\\
  Denote by $l(\g)$ and $l'(\g)$ the length of a curve $\g\in {\cal A}$ relative to $F$ and $F'$ respectively. If $\g$ is defined on $[0,1]$ from our assumption we have  $  l'(\g)\leq C. l(\g)$.  Let $\{{{\cal A}'^D}_r\}_{r\in\R^+}$ the filtration of $\cal A^D$ associated to $F'$. 
  Therefore we obtain $${\cal A^D}_{r}\subset {\cal A'^D}_{Cr}.$$
 We set 
 $$\d'_r(x,y)=\dis\sup_{\g'\in{{\cal A}'^D}_r(x)}\bar{d}(\g',{{\cal A}'^D}_r(y)).$$
 Now in ${\cal C}^0([0,1], X)$  for any bounded subset $X$ and $Y$ of ${\cal C}^0([0,1], X)$ we set 
 $$\bar{\d}(X,Y)=\dis\sup_{x\in X}\bar{d}(x,Y)$$
On the one  hand, for an $\g\in {\cal A^D}_r(x)$ we have

$\bar{d}(\g,{\cal A^D}_r(y))\leq \bar{d}(\g,{\cal A'^D}_{Cr}(y))$ and so $\bar{\d}({\cal A^D}_r(x),{\cal A^D}_r(y))\leq \bar{\d}({\cal A^D}_r(x),{\cal A'^D}_{Cr}(y))$

On  the other hand we also have

$\bar{\d}({\cal A^D}_r(x),{\cal A'^D}_{Cr}(y))=\dis\sup_{g\in{\cal A^D}_r(x)}\bar{d}(\g,{\cal A'^D}_{Cr}(y))\leq \dis\sup_{g\in{\cal A'^D}_r(x)}\bar{d}(\g,{\cal A'^D}_{Cr}(y))=\bar{\d}({\cal A'^D}_r(x),{\cal A'^D}_{Cr}(y))$

 Finally we get 
 $$\d_r(x,y)=\bar{\d}({\cal A^D}_r(x),{\cal A^D}_r(y))\leq \bar{\d}({\cal A'^D}_r(x),{\cal A'^D}_{Cr}(y))=\d'_{Cr}(x,y)$$
 
 We easily obtain the inequality
  $$h(D,M,F',d_\Phi)\geq C^{-1} h(D,M,F,d_\Phi).$$
 
 

 \noindent {\it Point (4)}: clearly the set of admissible curves ${\cal A^D}$ satisfies the assumption of condition (\ref{adcompatible}). This ends the proof.\\
\end{proof}

For the proof of the Theorems we need an auxiliary result.\\

 We set
$${\cal N}_r=\{\g\in {\cal A}\textrm{ such that } F(\g(t),\dot{\g}(t))\leq r \;{\it a.e.}\}.$$ 
It is clear that $\{{\cal N}_r\}_{r\in \R^+}$ is a filtration of $\cal A$. Moreover we have

\begin{Lem} \label{distN}${}$\\
Let $\{\nu_r\}_{r\in \R^+}$ the family of increasing distances associated to the filtration $\{{\cal N}_r\}_{r\in \R^+}$. Then we have $\n_r=d_r$ for all $r>0$
\end{Lem}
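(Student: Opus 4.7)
The plan rests on a key invariance: if $\phi:[0,1]\to[0,1]$ is a monotone continuous surjection and $\gamma,\mu:[0,1]\to M$ are any two curves, then $\bar d(\gamma\circ\phi,\mu\circ\phi)=\bar d(\gamma,\mu)$, since substituting $\sigma=\phi(t)$ in the supremum recovers the original quantity. I will combine this with the fact that every $\gamma\in\mathcal{A}^D_r(x)$ of length $L=l(\gamma)\le r$ admits a constant-speed reparametrization $\tilde\gamma\in\mathcal{N}_L(x)\subset\mathcal{N}_r(x)$: namely $\tilde\gamma:=\gamma\circ t_\gamma$ for a measurable right-inverse $t_\gamma$ of the normalized arclength $\phi_\gamma(t):=s_\gamma(t)/L$, where $s_\gamma(t)=\int_0^t F(\gamma,\dot\gamma)\,du$. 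The identity $\gamma=\tilde\gamma\circ\phi_\gamma$ holds on $[0,1]$; on a plateau of $s_\gamma$ the curve $\gamma$ is itself constant, so the multivaluedness of $t_\gamma$ does not matter. The inclusion $\mathcal{N}_r\subset\mathcal{A}^D_r$ is immediate from integrating $F(\gamma,\dot\gamma)\le r$ over $[0,1]$.

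To prove $d_r\le\nu_r$, fix $\gamma\in\mathcal{A}^D_r(x)$, $\epsilon>0$, and form $\tilde\gamma\in\mathcal{N}_r(x)$ as above. Using the definition of $\nu_r^\delta(x,y)$ applied to $\tilde\gamma\in\mathcal{N}_r(x)$, there exists $\tilde\mu\in\mathcal{N}_r(y)$ with $\bar d(\tilde\gamma,\tilde\mu)\le\nu_r^\delta(x,y)+\epsilon$. Set $\mu:=\tilde\mu\circ\phi_\gamma$; this is a monotone continuous reparametrization of the admissible curve $\tilde\mu$, so $\mu$ is admissible, and its length equals $l(\tilde\mu)\le r$, giving $\mu\in\mathcal{A}^D_r(y)$. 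The invariance of $\bar d$ then yields
\[
\bar d(\gamma,\mu)=\bar d(\tilde\gamma\circ\phi_\gamma,\tilde\mu\circ\phi_\gamma)=\bar d(\tilde\gamma,\tilde\mu)\le\nu_r^\delta(x,y)+\epsilon.
\]
Passing to the infimum over $\mu$, the supremum over $\gamma$, and finally $\epsilon\to 0$ gives $\delta_r(x,y)\le\nu_r^\delta(x,y)$; swapping $x\leftrightarrow y$ and summing yields $d_r\le\nu_r$.

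The reverse inequality $\nu_r\le d_r$ is the delicate point, because now $\gamma'\in\mathcal{N}_r(x)$ is fixed in the supremum defining $\nu_r^\delta$ and cannot itself be freely reparametrized away. My plan is to transfer the trick to the tracker side: given $\gamma'\in\mathcal{N}_r(x)$, the definition of $\delta_r$ produces $\mu\in\mathcal{A}^D_r(y)$ with $\bar d(\gamma',\mu)\le\delta_r(x,y)+\epsilon$, and one wants to turn $\mu$ into some $\mu'\in\mathcal{N}_r(y)$ with comparable $\bar d$-distance to $\gamma'$. The natural candidate $\mu':=\tilde\mu\circ\phi_{\gamma'}$, obtained by applying $\gamma'$'s arclength reparametrization to the constant-speed reparametrization $\tilde\mu\in\mathcal{N}_{L_\mu}(y)$ of $\mu$, has speed at most $L_\mu r/L_{\gamma'}$ and hence lies in $\mathcal{N}_r$ only under the hypothesis $L_\mu\le L_{\gamma'}$. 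I anticipate closing the argument by choosing instead a joint reparametrization $\phi$ of both $\gamma'$ and $\mu$ governed by an ODE of the form $\dot\phi(t)=\min(1,r/F(\mu,\dot\mu)(\phi(t)))$, which keeps $\gamma'\circ\phi$ and $\mu\circ\phi$ simultaneously in $\mathcal{N}_r$, and then padding the resulting curves with constant extensions (legitimate by the first clause of Definition~\ref{admissible}) when $\phi$ fails to reach $1$ at time $1$. Controlling the cost in $\bar d$ of this padding step—showing it can be absorbed into the initial slack $\epsilon$ by exploiting that $F(\gamma',\dot\gamma')\le r$ so that $\gamma'$ does not wander more than $r\sup_t|t-\phi(t)|$—is the hardest step I expect to fight through.
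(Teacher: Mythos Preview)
Your first inequality $d_r\le\nu_r$ is correct and is exactly the mechanism the paper uses: pass from $\gamma\in\mathcal{A}^D_r(x)$ to its constant-speed reparametrization $\hat\gamma\in\mathcal{N}_r(x)$, exploit the invariance $\bar d(\alpha\circ\phi,\beta\circ\phi)=\bar d(\alpha,\beta)$ for surjective $\phi$, and use that $\mathcal{A}^D_r(y)$ is stable under reparametrization. This part is complete.

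For the reverse inequality $\nu_r\le d_r$ you are right that the obvious symmetry breaks down: when $\gamma'\in\mathcal{N}_r(x)$ is held fixed and one tries to push a near-optimal tracker $\mu\in\mathcal{A}^D_r(y)$ into $\mathcal{N}_r(y)$ via its arc-length map $\phi_\mu$, the companion $\gamma'\circ\phi_\mu^{-1}$ need not lie in $\mathcal{N}_r(x)$. The paper does \emph{not} supply a more detailed argument here. It simply asserts ``by the same argument'' that $\bar d(\gamma,\mathcal{A}^D_r(y))=\bar d(\gamma,\mathcal{N}_r(y))$ for each individual $\gamma$, and builds both inequalities on that identity. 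But this pointwise identity is in fact false: take $M=\mathbb{R}^2$, $r=1$, $x=0$, $y=(0.1,0)$, and let $\gamma$ sit at the origin on $[0,\tfrac12]$ and then move to $(1,0)$ at speed $2$; one computes $\bar d(\gamma,\mathcal{A}^D_1(y))=0.1$ while $\bar d(\gamma,\mathcal{N}_1(y))\ge 0.25$. Only after taking the supremum over $\gamma$ in $\mathcal{N}_r(x)$ does the equality $\delta_r=\nu_r^\delta$ survive (and in this example both equal $0.1$, via translation). So your instinct that something nontrivial is needed is well-founded; the paper's own treatment glosses over precisely the point you flagged.

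That said, your proposed fix is left as a sketch: the ODE-driven joint reparametrization followed by constant padding is plausible, but you have not verified that the $\bar d$-cost of the padding can be controlled by $\epsilon$, and the heuristic ``$\gamma'$ does not wander more than $r\sup_t|t-\phi(t)|$'' needs the gap $|t-\phi(t)|$ itself to be small, which is not guaranteed (it can be of order $1$ when $\mu$ has long stretches of high speed). As written the second half remains a genuine gap. A cleaner route would be to avoid the pointwise claim entirely and argue only at the level of the outer supremum, showing directly that for $\gamma'\in\mathcal{N}_r(x)$ one can manufacture a tracker $\mu'\in\mathcal{N}_r(y)$ by first approximating $\mu$ by a piecewise-$C^1$ curve and then redistributing its speed while simultaneously redistributing the (already $\le r$) speed of $\gamma'$; but this still requires a careful construction that neither you nor the paper carries out.
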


\begin{proof} According to Remark \ref{interpretationdelta} it is sufficient to prove the relation
$$\d_r(x,y)= \dis\sup_{\g\in{\cal N}_r(x)}\bar{d}(\g,{\cal N}_r(y)).$$
 Clearly ${\cal N}_r(x)$ is contained in ${\cal A^D}_r(x)$. Now if $ \g$ belongs to ${\cal A^D}_r(x)$, the arc-length parametrization of $\g$ gives rise to a curve $\bar{g}:[0,l(\g)]\ap M$   such that $F(\bar{\g}(t),\dot{\bar{\g}}(t)=1$ {\it a.e.}. Since $l(\g)\leq r$ the curve $\hat{\g}(t)=\bar{\g}(t.l(\g))$ for $t\in [0,1]$  belongs to ${\cal N}_r(x)$.


 \noindent For any $\g\in {\cal A^D}_r(x)$, since there exists  $\t:[0,1]\ap [0,1]$ such that $\hat{\g}(\t(t))=\g(t)$ and $\hat{\g}\in {\cal N}_r(x)$ then we get

$\bar{d}(\g,{\cal A^D}_r(y))=\bar{d}(\hat{\g},{\cal A^D}_r(y))$.

\noindent Let $\g_n\in{\cal A^D}_r(x)$ be a sequence such that 
  
  $\d_r(x,y)=\dis\sup_{\g\in{\cal A^D}_r(x)}\bar{d}(\g,{\cal A^D}_r(y))=\dis\lim_{n\ap \infty}\bar{d}(\g_n,{\cal A^D}_r(x))$.
  
  \noindent Then we obtain
\begin{eqnarray}\label{drN}
 \d_r(x,y)=\dis\lim_{n\ap \infty}\bar{d}(\hat{\g}_n,{\cal A^D}_r(x))
\end{eqnarray}
.


\noindent Now by the same argument in ${\cal A^D}_r(y)$ we obtain

$\bar{d}({\g},{\cal A^D}_r(y))=\bar{d}({\g},{\cal N}_r(y))=\bar{d}(\hat{\g},{\cal N}_r(y))$.

\noindent It follows that in the one hand

$\d_r(x,y)\geq \dis\sup_{\g\in{\cal N}_r(x)}\bar{d}(\g,{\cal A^D}_r(y))=\dis\sup_{\g\in{\cal N}_r(x)}\bar{d}(\g,{\cal N}_r(y))$.

\noindent On the other hand  from (\ref{drN}) we obtain

$\d_r(x,y)=\dis\lim_{n\ap \infty}\bar{d}(\hat{\g}_n,{\cal N}_r(x))\leq \dis\sup_{\g\in{\cal N}_r(x)}\bar{d}(\g,{\cal N}_r(y))$

\noindent Finally we get

$\d_r(x,y)= \dis\sup_{\g\in{\cal N}_r(x)}\bar{d}(\g,{\cal N}_r(y))$

\end{proof}

\begin{proof}[Proof of Theorem \ref{Zung}]${}$\\
Let $l(\g)$ be the length of a $D$-admissible  curve $\g$ relative to the weak Finsler metric $F_D$. On the one hand, we have the family $\{d_r\}_{r\in\R^+}$ of increasing distance associated to the filtration defined by
$$ {\cal A^D}_r=\{\g\in {\cal A^D} : l(\g)\leq r\}$$
On the other hand, if $\cal A$ is the set of $A$-paths associated to $(A,M,\sharp)$, we have the family $\{d'_r\}_{r\in\R^+}$ of increasing distances associated  to the filtration defined by
$${\cal A}_r=\{\g\in {\cal A} \textrm{ such that } \exists u:[a,b]\ap A \textrm{ over  } \g \textrm{ and }  F(\g(t),{u}(t))\leq r\; \forall t\in [a,b] \;{\it a.e.}\}$$
Therefore it is sufficient to prove that $d_r=d'_r$. \\
From Lemma \ref{distN}, the family $\{d_r\}_{r\in\R^+}$ is also associated to the filtration defined by
$${\cal N}_r=\{\g\in {\cal A}\textrm{ such that } F_D(\g(t),\dot{\g}(t))\leq r \;{\it a.e.}\}.$$ 

On one hand from the definition of $F_D$ and Lemma \ref{Existenceconicalcontrol}, we have  we have ${\cal N}_r(x)\subset{\cal A}_r(x)$. On the other hand if $\g\in {\cal A}_r(x)$, there exists $u:[0,1]\ap A$ over $\g$ such that $F(\g(t),u(t))\leq r$ {\it a.e.} from  the definition of $F_D$ and Lemma \ref{Existenceconicalcontrol} there exists $\hat{u}:[0,1]\ap A$ over $\g$ such that $F_D(\g(t)\hat{u}(t)\leq r$ {\it a.e.} so $\g$ belongs to ${\cal N}_r(x)$. It follows that ${\cal N}_r(x)={\cal A}_r(x)$. It implies directly that $d_r=d'_r$ for all $r\in \R^+$.

\end{proof}

\begin{proof}[Proof of Theorem \ref{h=0} and Theorem 1]${}$\\
Since the accessibility foliation reduces to one leaf  $M$, the distance $d_M$ of minimal length for $D$-admissible curves is a distance on $M$ such that $d_\Phi\leq d_M$. Now, to the distance $d_M$ and the filtration ${\cal N}_{r}$ of $\cal A$ ({\it cf.} Lemma \ref{distN}) we can associate the family ${\cal M}=\{d'_r\}_{r\in \R^+}$ of increasing distances and consider the entropy $h({\cal M}', M,d_M)$. 
According to Remark \ref{interpretationdelta}  and the construction of $d_r$ we have
$$d'_r(x,y)=\dis\sup_{\g\in {\cal N}_r(x)}\bar{d}_M(\g,{\cal N}_r(y))+\dis\sup_{\mu\in {\cal N}_r(y)}\bar{d}_M(\mu,{\cal N}_r(x))$$
where $\bar{d}_M(\g,\mu)=\dis\sup_{t\in[0,1]}d_M(\g(t),\mu(t))$. 
Now for any $\g\in {\cal N}_r(x)$ we have $\bar{d}(\g,{\cal N}_r(y))\leq \bar{d}_M(\g,{\cal N}_r(y))$. Therfore according to Lemma \ref{distN} we get 

$\dis\sup_{\g\in {\cal N}_r(x)}\bar{d}(\g,{\cal N}_r(y))\leq \dis\sup_{\g\in {\cal N}_r(x)}\bar{d}_M(\g,{\cal N}_r(y))$ 

$\dis\sup_{\mu\in {\cal N}_r(y)}\bar{d}(\mu,{\cal N}_r(x))\leq \dis\sup_{\mu\in {\cal N}_r(y)}\bar{d}_M(\mu,{\cal N}_r(x))$ 

Therefore we obtain  $d_r\leq d'_r$ for all $r\in \R^+$.

From Remark \ref{compactX} Point (2) we obtain:
\begin{eqnarray}\label{dominh}
0\leq h(D,M, F)\leq h({\cal M}', M,d_M)
\end{eqnarray}
It remains to show that  $h({\cal M}', M,d_M)=0$.\\
Fix two points 
$x$ and $y$ in $M$ and denote by $\rho=d_M(x,y)$. For any $\epsilon >0$ there exists a $D$-admissible  curve $\eta:[0,1]\ap M$ such that $\eta(0)=y$,  $\;\eta(1)=x$ and $ l(\eta)\leq \rho+\epsilon=\rho'$. After reparametrization of $\eta $ if necessary, (see proof of Lemma \ref{distN}) we can assume that $\eta$ belongs to ${\cal N}_{\rho'}(y)$. Choose some $r>\rho$ and choose $\epsilon$  such that $r>\rho'=\rho+\epsilon$; consider any $\g\in {\cal N}_r(x)$. As in \cite{Zu} proof Lemma 3.3, we define $\mu:[0,1]\ap M$ by

$\mu(t)=\eta(rt/\rho')$ for $0\leq t\leq \rho'/r$ and $\mu(t)=\g(t-\rho'/r)$ for $\rho'/r<t\leq 1$. 

\noindent It is easy to see that $\mu$ belongs to ${\cal N}_r(y)$. Thus we have  $\d'_r(x,y)\leq l(\eta)\leq \rho+\epsilon$ for any  $\epsilon >0$ small enough. Finally we get

$d'_r(x,y)\leq 2d_M(x,y)$ if $r>d_M(x,y)$

\noindent Assume now that $0<r\leq \rho$ and consider $d'_r(x,y)/\rho$. Since the family $\{d'_r\}$ is increasing with $r$, according to the previous result, for any $r'=\rho+\epsilon$, for some $\epsilon>0$, we have:

$d'_r(x,y)/\rho\leq d'_{\rho'}(x,y)/\rho\leq 2$.

\noindent Therefore for any $r$ we obtain $d'_r\leq 2d_M$. Now according to  Proposition \ref{popdr} finally we get $d'_r=2d_M$ which implies trivially that  $h({\cal M}', M,d_M)=0$.

\end{proof}

\section{Entropy of singular Riemannian foliation}
\subsection{Entropy and  Finsler submersion}\label{finslersub}${}$\\
{\it In this subsection we will recall the Definitions and results of \cite{AlDu} on  isometric Finsler submersion.}\\

Given a Minkowski  normed vector space  $(E,\Phi)$ we denote  $B_\Phi$ the the closed unit ball relative to the norm $\Phi$. A surjective linear map $\pi : {E}' \ap E $ between two Minkowski normed spaces $(\{E',{\Phi}')$ and $(E,\Phi)$ is called  an {\it isometric submersion} $\pi(B_{{\Phi}'})=B_{\Phi}$. In this context 
we have
$$\Phi(u)=\inf\{{\Phi}({u}')\;:\; \pi({u}')=u\}$$
In general we have $\Phi(\pi(u'))\leq {\Phi}'(u')$. A vector ${u}'\in {E}'$ is called {\it horizontal} if $\Phi(\pi({u}'))={\Phi}'({u}')$. The set of horizontal vector is a cone called {\it the horizontal cone of ${E}'$}.

\begin{Def}\label{isomsub}${}$
\begin{enumerate}
\item A map $f$ from a Finsler manifold $({M}',{\Phi}')$ to a Finsler manifold $(M,\Phi)$ is called a Finsler isometric submersion if $f:{M}'\ap M$ is a submersion and $T_xf:T_xM'\ap T_{f(x)}M$ is an isometric submersion of Minskowski normed spaces.
\item an absolutely continuous curve ${\g}':[a,b]\ap M$ is called an horizontal lift of an absolutely continuous curve $\g:[a,b]\ap M$ if $f\circ {\g}'=\g$ and $\dot{{\g}'}(t)$ belongs to the horizontal cone of $T_{{\g}'(t)}M'$ {\it a.e.}. 
\end{enumerate}
\end{Def}
Note that if $\g:[a,b]\ap M$ is an immersed $C^1$ curve and $x'$ is some point in $f^{-1}(\g(a))$ then there exists an unique horizontal lift ${\g}':[a,b]\ap M'$ such that $\bar{\g}(a)=x'$ (see \cite{AlDu}).
Given an absolutely  continue curve  $\g:[a,b]\ap M$,  if $
{\g'}:[a,b]\ap M'$ is an absolutely continuous lift of $\g$ ({\it i.e.} $f\circ {\g}'=\g$) then the length of ${\g}'$ is greater or equal to the length of $\g$. Moreover ${\g}'$ is an horizontal lift of $\g$ if and only if ${\g}'$ and $\g$ have the same length.
\begin{Rem}\label{riemansub}${}$\\
Since a Riemannian manifold  is a particular case of  of Finsler manifold,  a map $f$ from a Riemannian manifold $({M}',{g}')$ to a Riemannian manifold $(M,g)$ is a Riemannian submersion if and only if it is a Finsler isometric submersion. The great difference is that in the Riemannian case the horizontal cone in $T_{x'}M$ is the orthogonal of $\ker(T_{x'}f)$ for any $x'\in M'$.\\
\end{Rem}

Now in the context of Finsler isometric submersion (not necessary  Finsler isometric submersion between manifolds) we have the following results on the entropy:
\begin{The}\label{subentropy}${}$\\
Let $f:M'\ap M$ be a Finsler isometric submersion between compact  Finsler manifolds $(M',\Phi')$ and $(M,\Phi)$. Consider a filtration $\{{\cal A}_r\}_{r\in \R^+}$ of a set ${\cal A}$ of admissible curve in $M$. Let 
${\cal A}'_r$  be the set of horizontal lift of curves in ${\cal A}_r$ and we set ${\cal A}'=\dis\bigcup_{r\in \R^+} {\cal A}'_r$. Then ${\cal A}'$ is a set of admissible curves in $M$ and  $\{{\cal A}'_r\}_{r\in \R^+}$ is a filtration of ${\cal A}'$. Moreover   we have  $h({\cal A}',M', d_{\Phi'})\geq  h({\cal A},M,d_\Phi)$. In particular if $f$ is an isometry  then $h({\cal A}',M', d_{\Phi'})=h({\cal A},M, d_\Phi)$.\\
\end{The}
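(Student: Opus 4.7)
The plan is to break the proof into three stages: verification of the admissibility and filtration properties of ${\cal A}'$, establishment of a pointwise distance inequality between $d'_r$ on $M'$ and $d_r$ on $M$, and finally the entropy comparison via separated sets.

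First, I would check that ${\cal A}'=\bigcup_{r}{\cal A}'_r$ is a set of admissible curves in the sense of Definition \ref{admissible}. Constant curves at any point $x'\in M'$ are horizontal lifts of the constant curves in ${\cal A}_0\subset {\cal A}$, so (i) holds. For (ii)--(iv), I would use three simple facts about horizontal lifts: a monotonic reparametrization scales tangent vectors, hence preserves membership in the horizontal cone; concatenating two horizontal curves with matching endpoints in $M'$ gives a horizontal curve that projects to the concatenation downstairs; restriction of a horizontal lift is horizontal. Since in each case the projection lies in ${\cal A}$, the constructed curve lies in ${\cal A}'$. For the filtration axioms in Definition \ref{filtration}, ${\cal A}'_0$ reduces to the set of constant curves in $M'$, the inclusions ${\cal A}'_s\subset {\cal A}'_r$ lift directly from ${\cal A}_s\subset {\cal A}_r$, and the sub-interval property lifts: given $\g'\in {\cal A}'_r$ defined on $[0,1]$ with $\g=f\circ\g'\in {\cal A}_r$, pick the reparametrization $\mu\in {\cal A}_s$ of $\g|_{[0,T]}$ supplied by the filtration of $\cal A$, then take $\mu'$ to be its horizontal lift starting at $\g'(0)$; by uniqueness of horizontal lifts, $\mu'$ is a reparametrization of $\g'|_{[0,T]}$.

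Second, I would establish the pointwise inequality
$$d_r(f(x'),f(y'))\leq d'_r(x',y')\qquad \forall x',y'\in M',\ \forall r>0.$$
The key observation is that, by uniqueness of horizontal lifts through a prescribed point, the map $\g'\mapsto f\circ \g'$ is a bijection ${\cal A}'_r(x')\to {\cal A}_r(f(x'))$. Moreover, since $f$ is a Finsler isometric submersion, $d_\Phi(f(p'),f(q'))\leq d_{\Phi'}(p',q')$ on $M'$, which gives $\bar d_\Phi(f\circ\g',f\circ\mu')\leq \bar d_{\Phi'}(\g',\mu')$ pointwise on curves. Combining these, for any $\g\in {\cal A}_r(f(x'))$ with horizontal lift $\widetilde\g\in {\cal A}'_r(x')$,
$$\inf_{\mu\in {\cal A}_r(f(y'))}\bar d_\Phi(\g,\mu)=\inf_{\mu'\in {\cal A}'_r(y')}\bar d_\Phi(f\circ\widetilde\g,f\circ\mu')\leq \inf_{\mu'\in {\cal A}'_r(y')}\bar d_{\Phi'}(\widetilde\g,\mu').$$
Taking $\sup$ over $\g\in {\cal A}_r(f(x'))$ (which via the bijection corresponds to the $\sup$ over $\g'\in {\cal A}'_r(x')$) yields $\d_r(f(x'),f(y'))\leq \d'_r(x',y')$, and symmetrizing gives the claim.

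Third, I would convert this into the entropy inequality via separated sets. The map $f$ is surjective since $f(M')$ is open (submersion) and closed (compactness), hence equals $M$. Given a $(d_r,\epsilon)$-separated set $\{x_1,\dots,x_N\}\subset M$, choose preimages $x'_i\in f^{-1}(x_i)$; the pointwise inequality gives $d'_r(x'_i,x'_j)\geq d_r(x_i,x_j)>\epsilon$, so the preimages form a $(d'_r,\epsilon)$-separated set. Hence $N(d'_r,\epsilon)\geq N(d_r,\epsilon)$ for all $\epsilon>0$, and formula (\ref{entropyN}) gives $h({\cal A}',M',d_{\Phi'})\geq h({\cal A},M,d_\Phi)$. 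When $f$ is moreover an isometry, the inverse $f^{-1}$ is also a Finsler isometric submersion and horizontal lifts coincide with preimages of curves, so applying the same argument to $f^{-1}$ gives the reverse inequality.

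The main obstacle I anticipate is the rigorous treatment of horizontal lifts for curves that are only absolutely continuous rather than $C^1$ (the paper's existence/uniqueness statement is cited for $C^1$ immersed curves), and the verification that the concatenation axiom really lifts: one needs to ensure that the horizontal lifts of the two pieces can be chosen to meet in the fiber over the junction point, which forces the lift of the second piece to start precisely at the endpoint of the lift of the first piece. This is exactly what the uniqueness of horizontal lifts grants, but it must be invoked with care.
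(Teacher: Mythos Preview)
Your proposal is correct and follows essentially the same route as the paper: establish that horizontal lifts yield an admissible family with filtration, use the $1$-Lipschitz property of $f$ to get $d_r(f(x'),f(y'))\le d'_r(x',y')$, and deduce the entropy inequality. The paper is terser on the admissibility/filtration verification and, for the final step, invokes Proposition~\ref{hproduct}(2) (the Lipschitz-map entropy comparison via coverings) rather than arguing directly with separated sets as you do; both are equivalent routes from the pointwise distance inequality to the entropy bound. Your explicit treatment of the isometry case and your caveat about horizontal lifts of merely absolutely continuous curves are points the paper leaves implicit.
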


\begin{The}\label{pullbackentropy}${}$\\
Let $f:M'\ap M$ be a   surjective map between compact  Finsler manifolds $(M',\Phi')$ and $(M,\Phi)$. Consider is a smooth singular  distribution $D$  on $M$ provided with a (weak) Finsler metric $F$  and  the  pull back distribution $D'=f^{-1}(D)$ on $M'$   provided with a (weak)  Finsler $F'$. Assume that  $f$ is transverse to $D$ and  $T_{x'}f:D_{x'}\ap D_{f(x')}$ is a Finsler submersion between these two Minskowski normed spaces. Then
$$  h(D',M', F')\geq h(D,M, F) $$
\end{The}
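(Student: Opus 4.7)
The plan is to adapt the proof of Theorem \ref{subentropy}: first construct horizontal lifts of $D$-admissible curves on $M$ to $D'$-admissible curves on $M'$ of equal $F$-length, then exploit the fact that $f$ is (automatically) Lipschitz between compact Finsler manifolds to obtain a pointwise bound of the form $d_r(f(u), f(v)) \leq K \, d'_r(u, v)$ valid for every $r$, and finally apply Proposition \ref{hproduct}(2) to the surjective map $f$.

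First I would establish the horizontal lifting procedure. Given a $D$-admissible curve $\gamma:[0,1]\ap M$ and a point $x' \in f^{-1}(\gamma(0))$, the hypothesis that $T_{x'}f : D'_{x'} \ap D_{f(x')}$ is an isometric Finsler submersion between the Minkowski normed spaces $(D'_{x'}, F')$ and $(D_{f(x')}, F)$ singles out a horizontal cone in $D'_{x'}$ on which $T_{x'}f$ preserves the Finsler norm. For smooth immersed $\gamma$ the existence of a unique horizontal lift $\gamma'$ starting at $x'$ follows from the ODE construction of \cite{AlDu}; for a general absolutely continuous $D$-admissible $\gamma$ one needs a measurable selection in the horizontal cone along $\gamma$ producing an absolutely continuous $\gamma':[0,1]\ap M'$ with $f \circ \gamma' = \gamma$, $\dot{\gamma}'(t) \in D'_{\gamma'(t)}$ horizontal a.e., and $F'(\gamma'(t), \dot{\gamma}'(t)) = F(\gamma(t), \dot{\gamma}(t))$ a.e. Integration yields $l_{F'}(\gamma') = l_F(\gamma) \leq r$, so $\gamma' \in {\cal A}^{D'}_r(x')$. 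Dually, $T_{x'}f(B_{F'}) = B_F$ forces $F(Tf(v')) \leq F'(v')$ for all $v' \in D'_{x'}$, so for any $\mu' \in {\cal A}^{D'}_r(y')$ the projection $\mu = f \circ \mu'$ is $D$-admissible with $l_F(\mu) \leq l_{F'}(\mu') \leq r$, i.e.~$\mu \in {\cal A}^D_r(f(y'))$.

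Given these two constructions, fix $u, v \in M'$ and set $x = f(u)$, $y = f(v)$. Since $f$ is smooth between compact Finsler manifolds it is $K$-Lipschitz for the ambient distances for some $K > 0$, so $\bar{d}_\Phi(f \circ \alpha, f \circ \beta) \leq K \, \bar{d}_{\Phi'}(\alpha, \beta)$ for any $\alpha, \beta \in {\cal C}^0([0,1], M')$. For every $\gamma \in {\cal A}^D_r(x)$ with horizontal lift $\gamma' \in {\cal A}^{D'}_r(u)$ and every $\mu' \in {\cal A}^{D'}_r(v)$ with projection $\mu = f \circ \mu' \in {\cal A}^D_r(y)$, we get $\bar{d}_\Phi(\gamma, \mu) \leq K \, \bar{d}_{\Phi'}(\gamma', \mu')$. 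Taking $\inf_{\mu'}$ and then $\sup_{\gamma}$ via the lifting map yields $\d_r(x, y) \leq K \, \d'_r(u, v)$, and symmetrizing gives $d_r(f(u), f(v)) \leq K \, d'_r(u, v)$. Thus $f$ is $K$-Lipschitz from $(M', d'_r)$ to $(M, d_r)$ for every $r$, and since $f$ is surjective Proposition \ref{hproduct}(2) delivers $h(D', M', F') \geq h(D, M, F)$.

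The main obstacle will be the horizontal lifting of a general absolutely continuous $D$-admissible curve in the first step: \cite{AlDu} treats only the smooth case, and the extension relies on a measurable selection inside the typically non-convex horizontal cone of $D'$ along $\gamma$. Everything else is a formal consequence of the Finsler submersion hypothesis, the Lipschitz bound on $f$, and the definitions of ${\cal A}^D_r$, $\d_r$, and $d_r$.
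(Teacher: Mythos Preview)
Your proposal is correct and follows essentially the same route as the paper: construct horizontal lifts so that $f({\cal A}^{D'}_r(x'))={\cal A}^D_r(f(x'))$, use compactness of $M'$ to get a uniform Lipschitz constant $K$ for $f$ with respect to the ambient Finsler distances, deduce $d_r(f(u),f(v))\leq K\,d'_r(u,v)$ for all $r$, and apply Proposition~\ref{hproduct}(2). The only point worth noting is that the paper handles the obstacle you flag (horizontal lifting of an absolutely continuous $D$-curve) by invoking the measurable-selection argument from the proof of Lemma~\ref{Existenceconicalcontrol} rather than the smooth ODE construction of \cite{AlDu}.
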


\begin{proof}[Proof of Theorem \ref{subentropy}]${}$\\
In the one hand, by construction for any ${\g}'\in {\cal A}'$,  $f\circ \g'$ belongs to $\cal A$, so  it follows that ${\cal A}'$ is an admissible set of curves in $M'$.  On the other hand, since  the length of a horizontal lift ${\g}'$ of some curve $\g$ is equal to the length of $\g$, we have ${\cal A}'_r=\{{\g}'\in {\cal A}'\;:\; f\circ \g\in {\cal A}_r\}$ and so  $\{{\cal A}'_r\}_{r\in \R^+}$ is a filtration of $\cal A$. Since $f$ is surjective if $x=f(x')$ we have
$$f({\cal A}'_r(x')):=\{f\circ {\g}'\;:\; {\g}'\in {\cal A}'_r(x')\}={\cal A}_r(x)$$
Moreover for any $\g'\in {\cal A}'_r(x')$ then we have  $l({\g}')=l(f\circ {\g}')$.
Since  $d_{\Phi}(f(x'),f(y'))\leq d_{\Phi'}(x',y')$ we must have $\bar{d}_{\Phi}(f\circ {\g}', f\circ \mu')\leq \bar{d}_{\Phi'}(\g',\mu')$ (notations of Remark \ref{interpretationdelta}). It follows that for any $\g'\in {\cal A}'_r(x')$ we have:
$$\bar{d}_\Phi(f\circ \g',{\cal A}_r(f(y')))\leq \bar{d}_{\Phi'}(\g',{\cal A}'_r(y')).$$  Therefore for any $x$ and $y$ in $M$ and any $x'\in f^{-1}(x)$
 and $y'\in f^{-1}(y)$  we get:
$$\dis\sup_{\g\in {\cal A}_r(x)}  \bar{d}_\Phi(\g,{\cal A}_r(y))\leq \dis\sup_{\g'\in {\cal A}'_r(x')} \bar{d}_\Phi(f\circ \g',{\cal A}_r(y))\leq \dis\sup_{\g'\in {\cal A}'_r(x')}\bar{d}_{\Phi'}(\g',{\cal A}'_r(y')).$$
This implies that,  if $\{d_r\}_{r\in\R^+}$ and $\{d'_r\}_{r\in\R^+}$ are the family of distances associated to the filtration $\{{\cal A}_r\}_{r\in \R^+}$ and $\{{\cal A}'_r\}_{r\in \R^+}$ respectively, we have
$$d_r(f(x'),f(y'))\leq d'_r(x',y')\;\; \forall x', y'\in M'.$$ 
From Proposition \ref{hproduct} Point (2) we obtain  the inequality $h({\cal A}',M', d_{\Phi'})\geq h({\cal A},M, d_\Phi)$.

\end{proof}

\begin{proof}[ Proof of Theorem \ref{pullbackentropy}]${}$\\
Denote by ${\cal A^D}'$ and ${\cal A^D}$ the set of $D'$-curves and  $D$-curves,  $\{{\cal A^D}'_r\}_{r\in \R^+}$ and $\{{\cal A^D}_r\}_{r\in \R^+}$ the associated natural filtration by length relative to $F'$ and $F$ respectively. Fom our assumption, in ${\cal A^D}'$ if $\g'$ belongs to ${\cal A^D}'$ then $\g=f\circ \g'$ belongs to ${\cal A^D}$.  Such a curve $\g'$ will be called {\it lift } of $\g$. Moreover $\g'$ will be called an {\it horizontal lift} if $\dot{\g}'$ belongs to the horizontal cone of $D'_{\g'(t)}$ {\it a.e.}.

 \noindent Since $f$ is surjective and $D'=f^{-1}(D)$, by using similar arguments to those used in the proof of Lemma \ref{Existenceconicalcontrol}, we can show that  if $\g$ belongs to ${\cal A^D}$   there always exists a  horizontal lift $\g'\in {\cal A^D}'$ of $\g$.  It follows in particular that $${\cal A^D}=\{f\circ\g'\;:\; \g'\in {\cal A^D}'\}:=f({\cal A^D}').$$
  Moreover if ${\cal H^D}'$ is the set of horizontal lifts of curves of ${\cal A^D}$ in ${\cal A^D}'$ then we also have
  $${\cal A^D}=\{f\circ\g'\;:\; \g'\in {\cal H^D}'\}:=f({\cal H^D}').$$
Now, if $\g'\in {\cal A^D}'$ is a lift of $\g\in{\cal A^D}$ then the length $l(\g')$ (relative to $F'$) is less or equal to the length $l(\g)$ (relative to $F$) and $\g'$ is an horizontal lift if and only if $l(\g')=l(\g)$. This implies that the natural filtrations $\{{\cal A^D}'_r\}_{r\in \R^+}$, $\{{\cal H^D}'_r\}_{r\in \R^+}$ and $\{{\cal A^D}'_r\}_{r\in \R^+}$ of the set  ${\cal A^D}'$, ${\cal H^D}'$ and ${\cal A^D}$ respectively satisfies the following relations:

${\cal H^D}'_r \subset {\cal A^D}'_r$ and $f({\cal H^D}'_r)={\cal A^D}_r $ which implies $f({\cal A^D}'_r)= {\cal A^D}_r$ since $l(\g')\leq  l(f\circ\g')$.

\noindent Denote by $SM'$ the set of vector $v'\in T_{x'}M'$ such that $\Phi'(x',v')=1$. Since $SM'$ is compact and since the map $(x',v') \mapsto \Phi'(f(x'),Tf(v'))$ is continuous on $SM'$  this map is bounded  on $SM'$ by some constant $K$. Therefore we have 
$d_{\Phi}(f(x'),f(y'))\leq K d_{\Phi'}(x',y').$
By similar arguments used in the previous proof we also have
$$\dis\sup_{\g\in {\cal A^D}_r(x)}  \bar{d}_\Phi(\g,{\cal A}_r(y))\leq \dis\sup_{\g'\in {\cal A^D}'_r(x')} \bar{d}_\Phi(f\circ \g',{\cal A}_r(y))\leq \dis\sup_{\g'\in {\cal A}'_r(x')}K\bar{d}_{\Phi'}(\g',{\cal A}'_r(y')),$$
for any $x$ and $y$ in $M$ and any $x'\in f^{-1}(x)$
 and $y'\in f^{-1}(y)$
and we conclude as in the previous proof.

\end{proof}

\subsection{Singular Riemannian foliation}\label{srf}${}$\\
In this section we recall the principal results about singular Riemannian developed in \cite{Al} and  which will be used in the following subsection.\\

Let $(M,g)$ be a complete Riemannian manifold. A {\it singular Riemannian foliation} $\cal F$ on $M$ is a Stefan-Sussmann  foliation such that:


every geodesic which is perpendicular to one leaf must be perpendicular to any leaf it meets.\\

When the foliation $\cal F$ is regular we simply say that $\cal F$ is a Riemannian manifold. Such a notion was introduced by P. Molino (\cite{Mo}).
Typical examples of singular Riemannian foliations  are the partition by orbits of an isometric action or  by leaf closures of a Riemannian foliation. For more Examples see \cite{Al} and references inside this paper. 

Given a singular Riemannian foliation $\cal F$ on $M$, the union of the leaves having the same dimension is an embedded submanifold called stratum and in particular the {\it minimal stratum}
 is a closed submanifold (see \cite{Mo}). In fact, each stratum is an embedded submanifold and a union of
geodesics that are perpendicular to the leaves. \\The essential result of \cite{Al} is the following desingularization Theorem:
 
 \begin{The}\label{desing}${}$\\
 Let $\cal F$ be a singular Riemannian foliation of a compact Riemannian manifold $(M,g)$, $\S$ the minimal stratum of $\cal F$ ( with leaves of dimension $k_0$) and $\rm{Tub}_r(\S)$ the tubular neighborhood over $\S$ of radius $r$ (relative to $g$). Then, by blowing up $M$ along $\S$, we obtain a singular Riemannian foliation $\hat{\cal F}_r$ (with leaves of dimension greater then $k_0$) on a compact Riemannian manifold $(\hat{M}_r(\S),\hat{g}_r)$, and a map 
 $\hat{\pi}_r: \hat{M}_r(\S) \ap M$ with the following properties:

 (a) $\hat{\pi}_r$ projects each leaf of $\hat{\cal F}_r$ into a leaf of ${\cal F}$.

 (b) Set $\hat{\S}=\hat{\pi}_r^{-1}(\S)$.Then $\hat{\pi}_r :(\hat{M}_r(\S)\setminus \hat{\S},\hat{\cal F})\ap (M\setminus \S,{\cal F})$ is a foliated
diffeomorphism and $\hat{\pi}_r :(\hat{M}_r(\S)\setminus\rm{Tub}_r (\hat{\S}),\hat{\cal F})\ap (M\setminus \rm{Tub}_r(\S),{\cal F})$ is an isometry.
 
 (c) If a unit speed geodesic $\hat{\g}$ is orthogonal to $\hat{\S}$, then $\hat{\pi}_r(\hat{\g})$ is a unit speed geodesic orthogonal to $\S$.
 
 (d) ${(\hat{\pi}_r)}_{| \hat{\S}} :(\hat{\S},\hat{g}_r)\ap (\S,g)$ is a Riemannian submersion. In addition $(\hat{\S},\hat{\cal F}_{|\hat{\S}},\hat{g}_r)$ is a singular \\ ${}\;\;\;$Riemannian foliation.
  
  \noindent Moreover the liftings of horizontal geodesics of $(\S,{\cal F}_{| \S},g)$  are horizontal geodesics of $(\hat{\S},\hat{\cal F}_{|\hat{\S}},\hat{g}_r)$.
  
\noindent Furthermore, by successive blow-ups, we have a regular Riemannian foliation $\hat{F}$ on a compact Riemannian manifold $\hat{M}$ and a desingularization map $\hat{\rho} : \hat{M} \ap  M$ that projects each leaf $\hat{L}$ of $\hat{\cal F}$ into a leaf $L$ of $\cal F$.
  \end{The}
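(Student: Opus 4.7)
The plan is to build the desingularization $(\hat{M}_r(\Sigma),\hat{g}_r,\hat{\cal F}_r)$ by a Riemannian blow-up along the minimal stratum, then iterate. First I would exploit the defining property of a singular Riemannian foliation near the minimal stratum $\Sigma$. Since $\Sigma$ is a closed embedded submanifold and every geodesic perpendicular to one leaf stays perpendicular to all leaves it meets, the normal exponential map $\exp^\perp:\nu(\Sigma)\ap M$ is a diffeomorphism onto $\mathrm{Tub}_r(\Sigma)$ for $r$ smaller than the focal radius, and the leaves of $\cal F$ are saturated by pieces of these transverse geodesics. This gives a slice-type structure: near a point $x\in\Sigma$ the foliation looks like the product of the leaf $L_x\subset\Sigma$ with a singular foliation on the slice $\nu_x(\Sigma)$ that is invariant under a natural orthogonal action.

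Second, I would construct $\hat{M}_r(\Sigma)$ as the manifold obtained by replacing $\mathrm{Tub}_r(\Sigma)$ by the cylinder $[0,r]\times S\nu(\Sigma)$ glued along the boundary sphere bundle of radius $r$, with $\hat\pi_r(t,v)=\exp^\perp(tv)$. The exceptional divisor is $\hat\Sigma=\{0\}\times S\nu(\Sigma)$, on which $\hat\pi_r$ is the bundle projection $S\nu(\Sigma)\ap\Sigma$; points of $S\nu(\Sigma)$ are unit normal directions, so $\hat\pi_r$ away from $\hat\Sigma$ is a diffeomorphism, and an isometry outside a fattening of $\hat\Sigma$ once I put a metric which agrees with $\hat\pi_r^*g$ outside $[0,r]\times S\nu(\Sigma)$. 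For $\hat g_r$ itself I would take a warped-type metric $dt^2+h_t$ on the cylinder, where $h_t$ interpolates smoothly from the natural submersion metric on $S\nu(\Sigma)$ at $t=0$ to the induced metric on the distance sphere of radius $r$ at $t=r$; this immediately forces property (c), and property (d) follows because the restriction to $\hat\Sigma$ is a bundle map with standard fiber metric.

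Third, I would define $\hat{\cal F}_r$ by pulling back $\cal F$ on the open set $\hat M_r(\Sigma)\setminus\hat\Sigma$ via the diffeomorphism $\hat\pi_r$ and extending to $\hat\Sigma$ by declaring the leaf through $(0,v)$ to be $L_x\times\mathrm{Hol}(v)$, where $L_x$ is the leaf of $\cal F|_\Sigma$ through $x=\hat\pi_r(v)$ and $\mathrm{Hol}(v)$ is the orbit of $v$ under the slice foliation at $x$ (equivalently, the limit as $t\to 0^+$ of $(1/t)(\hat\pi_r^{-1}(L_{\exp^\perp(tv)})\cap S\nu(\Sigma))$; the transverse Riemannian property is exactly what makes this limit well defined and of constant dimension on each leaf of $\cal F|_\Sigma$). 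Property (a) is then built in, and the dimension of every leaf meeting $\hat\Sigma$ is now at least $\dim L_x+1>k_0$, so the minimal leaf dimension of $\hat{\cal F}_r$ strictly exceeds $k_0$. Iterating the construction on the new minimal stratum of $\hat{\cal F}_r$ and using that leaf dimension is bounded by $\dim M$, the procedure terminates in finitely many steps at a regular Riemannian foliation, producing $\hat\rho:\hat M\ap M$.

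The main obstacle will be step three: verifying that the extension of $\hat{\cal F}_r$ across $\hat\Sigma$ is a genuine \emph{singular Riemannian} foliation for $\hat g_r$, not merely a smooth partition into submanifolds. Concretely, one must check that every $\hat g_r$-geodesic perpendicular to a leaf of $\hat{\cal F}_r$ on $\hat\Sigma$ remains perpendicular to all leaves it meets. This reduces to understanding the interaction between the slice representation at points of $\Sigma$ and the warped metric $h_t$: the submersion property (d) is what eventually identifies horizontal geodesics on $\hat\Sigma$ with limits of horizontal geodesics above, making the orthogonality condition propagate correctly. Handling this requires the careful normal-holonomy analysis of Molino-type foliations near a singular stratum, which is where the bulk of the technical work in \cite{Al} is concentrated.
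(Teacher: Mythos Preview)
The paper does not contain a proof of this theorem. Theorem~\ref{desing} is explicitly introduced as ``the essential result of \cite{Al}'' and is quoted from Alexandrino's paper without proof; the present paper only uses it as a black box (together with Corollary~\ref{propF}) in the proof of Theorem~\ref{The2}. There is therefore no ``paper's own proof'' to compare your proposal against.

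That said, your sketch is a faithful outline of the blow-up construction carried out in \cite{Al}: replacing a tubular neighbourhood of the minimal stratum by a cylinder over the unit normal sphere bundle, equipping it with a warped metric interpolating between the submersion metric on $\hat\Sigma$ and the pulled-back metric at radius $r$, and extending the foliation to $\hat\Sigma$ via the slice/infinitesimal foliation. You also correctly identify the genuine difficulty --- showing that the extended partition on $\hat\Sigma$ is again a singular Riemannian foliation for $\hat g_r$ --- and that this is where the detailed work (homothetic transformation lemma, analysis of the infinitesimal foliation, equifocality) in \cite{Al} is spent. For the purposes of this paper your level of detail would be excessive; a one-line citation is all that is used.
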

  We end this section by a Corollary  of  this theorem which be used in the next section. 
 \begin{Cor}\label{propF}${}$\\
 With the same assumptions and notations of Theorem \ref{desing} we have the following properties
  \begin{enumerate}
  \item {\it the map  $\hat{\pi}_r: \hat{M}_r(\S) \ap M$ is transverse to the foliation $\cal F$} ({\it i.e.} to the associated distribution $D$ of $\cal F$. 
  \item The foliation $\hat{\cal F}_r$ of $\hat{M}_r(\S) $ is the pull-back of the foliation $\cal F$ of $M$
  \item There exists a Riemannian metric $\hat{g}_M$ on $M$ with the following property:
   if $\hat{L}$ is a leaf of $\hat{\cal F}_r$  which projects on a leaf $L$ of $\cal F$ (via $\pi_r$) then $T\pi_r:T_{\hat{x}}\hat{L}\ap T_{f(\hat{x}}L$ is a Finsler isometric submersion with respect to the Minkowski norm on $T_{\hat{x}}\hat{L}$ 
  and $T_{f(\hat{x}}L$ induced by $\hat{g}_r$ and $\hat{g}_M$ respectively.
  \end{enumerate}
  \end{Cor}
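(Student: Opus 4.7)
My plan is to extract all three assertions directly from the conclusions (a)--(d) of Theorem \ref{desing}, treating (1) and (2) together and then constructing $\hat{g}_M$ in (3) by a stratum-by-stratum gluing.

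For (1), I would split the verification according to whether $\hat{x} \in \hat{M}_r(\S) \setminus \hat{\S}$ or $\hat{x} \in \hat{\S}$. Off the exceptional divisor, conclusion (b) of Theorem \ref{desing} gives that $\hat{\pi}_r$ is a foliated diffeomorphism onto $M \setminus \S$, so $T_{\hat{x}} \hat{\pi}_r$ is a linear isomorphism and transversality to any distribution is automatic. At a point $\hat{x} \in \hat{\S}$ with $x = \hat{\pi}_r(\hat{x}) \in \S$, I would combine (d) (so $(\hat{\pi}_r)_{|\hat{\S}}$ is a Riemannian submersion onto $\S$, hence $T_{\hat{x}}\hat{\pi}_r$ surjects onto $T_x\S$) with (c) (which guarantees that a unit vector normal to $\hat{\S}$ at $\hat{x}$ is sent under $T_{\hat{x}}\hat{\pi}_r$ to the unit tangent of a geodesic of $M$ orthogonal to $\S$). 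Because $\S$ is the union of minimal leaves, the leaf of $\cal F$ through $x$ is contained in $\S$, so $D_x\subset T_x\S$; adding the radial geodesic direction supplied by (c) and the remaining normal directions recovered from the other exceptional fibres above $x$ shows that $D_x + T_{\hat{x}}\hat\pi_r(T_{\hat x}\hat M_r(\S))$ fills $T_xM$ in the sense required by Definition \ref{transverse}.

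Given (1), conclusion (2) then follows from Proposition \ref{pullback}(2). Indeed, the pull-back $\hat{\pi}_r^{-1}(D)$ is integrable and each of its leaves is a connected component of $\hat{\pi}_r^{-1}(L)$ of the same codimension as $L$, for some leaf $L$ of $\cal F$. Property (a) of Theorem \ref{desing} asserts that each leaf $\hat L$ of $\hat{\cal F}_r$ maps into a single leaf $L$ of $\cal F$, so $\hat L\subset\hat\pi_r^{-1}(L)$; comparing dimensions using the transversality just established, together with the connectedness of $\hat L$, forces $\hat L$ to be exactly one of these components. Thus $\hat{\cal F}_r = \hat{\pi}_r^{-1}({\cal F})$.

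For (3), I would construct $\hat g_M$ by patching together two pieces. Away from $\S$, transport $\hat g_r$ to $M\setminus\S$ via the diffeomorphism of (b); by the isometry statement of (b) this metric already agrees with $g$ outside $\mathrm{Tub}_r(\S)$. On $\S$ itself, use (d) to push $\hat g_r |_{\hat\S}$ down to $\S$ via the Riemannian submersion $(\hat\pi_r)_{|\hat\S}$. These two pieces can then be glued across a neighbourhood of $\S$ by a partition of unity to obtain a smooth Riemannian metric $\hat g_M$ on $M$. By construction the restriction of $\hat\pi_r$ to each leaf $\hat L$ of $\hat{\cal F}_r$ is a Riemannian submersion onto the corresponding leaf $L$ of $\cal F$, with respect to the metrics induced by $\hat g_r$ on $\hat L$ and by $\hat g_M$ on $L$. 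An application of Remark \ref{riemansub} then promotes this Riemannian submersion to a Finsler isometric submersion of the associated Minkowski structures on $T_{\hat x}\hat L$ and $T_{\hat\pi_r(\hat x)} L$, yielding (3).

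I expect the main obstacle to be assertion (1): verifying at points of $\hat{\S}$ that the image of $T\hat{\pi}_r$ really completes $D_x$ to all of $T_xM$ requires a careful analysis of the blow-up differential in coordinates $(y, v, t)\mapsto \exp_y(tv)$, together with a uniform use of (c) and (d). The gluing step in (3) is also delicate because one must verify that the two candidate metrics, transported off $\S$ and induced on $\S$, fit together smoothly so that $\hat\pi_r$ remains a Riemannian submersion on each leaf.
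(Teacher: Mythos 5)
Your overall structure parallels the paper's: a case split on whether $\hat{x}$ lies on the exceptional divisor for (1), an appeal to Proposition~\ref{pullback} for (2), and a construction of $\hat{g}_M$ from $\hat{g}_r$ for (3). However, two of your steps do not hold up.

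For assertion (1), your argument at a point $\hat{x}\in\hat{\S}$ is not valid. Transversality of $\hat{\pi}_r$ to $D$ at $\hat{x}$ is a condition on the single subspace $D_{\hat{\pi}_r(\hat{x})}+T_{\hat{x}}\hat{\pi}_r\bigl(T_{\hat{x}}\hat{M}_r(\S)\bigr)$, and this only involves the differential at the one point $\hat{x}$. Your step \emph{adding the remaining normal directions recovered from the other exceptional fibres above $x$} is illegitimate: those directions are images of tangent vectors at \emph{other} points $\hat{x}'$ of the fibre and cannot be added into $T_{\hat{x}}\hat{\pi}_r(T_{\hat{x}}\hat{M}_r(\S))$. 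Since $D_{\pi_r(\hat{x})}\subset T_{\pi_r(\hat{x})}\S$ and (d) only gives that the image of $T_{\hat{x}}\hat{\pi}_r$ contains $T_x\S$, the normal directions to $\S$ must be produced from the differential at $\hat{x}$ alone, and your argument does not supply them. (The paper's own proof is terse here --- it just invokes (b) and (d) --- so this is exactly the point that needs the ``careful analysis of the blow-up differential'' you flag at the end, not the informal count you give in the body.)

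For assertion (3), your construction diverges from the paper's in a way that breaks the conclusion. The paper transports $\hat{g}_r$ to $M\setminus\S$ by the foliated diffeomorphism of (b), observes it already coincides with $g$ on $M\setminus\mathrm{Tub}_r(\S)$, and then cites an auxiliary lemma to the effect that this transported metric extends \emph{smoothly across $\S$ and equals $g$ there}; no gluing is performed, and the Riemannian-submersion property on leaves holds everywhere because the metric on $M\setminus\S$ is literally the pushforward of $\hat{g}_r$ and the extension is continuous. In your version you independently push down $\hat{g}_r|_{\hat{\S}}$ to $\S$ and then glue the two candidate metrics with a partition of unity. In the gluing collar the resulting metric is a pointwise convex combination of the two candidates, and the property that $\hat{\pi}_r|_{\hat{L}}:\hat{L}\to L$ is a Riemannian submersion is \emph{not} stable under such combinations; your closing ``by construction'' is therefore unjustified. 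To repair this you would need to show the two pieces already agree on a neighbourhood of $\S$ --- which is precisely the content of the external extension lemma the paper invokes, and cannot be replaced by a partition of unity.
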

  
  \begin{proof} ${}$\\
\noindent  {\it Point (1)}: Let  $\hat{x}$  be a point in $\hat{M}_r(\S)$. Then either $\hat{x}$ belongs to $\hat{M}_r(\S)\setminus \hat{\S}$ and the result comes from Point (b)
or $\hat{x}$ belongs to $ \hat{\S}$ and the result comes from point (d).\\

\noindent  {\it Point (2)}: Again it is a consequence of Point (b) and (d) of Theorem \ref{desing}.\\
 
  \noindent {\it Point (3)}: From Point (b) of Theorem \ref{desing}, via the  foliated diffeomorphism  
  $$\hat{\pi}_r :(\hat{M}_r(\S)\setminus \hat{\S},\hat{\cal F})\ap (M\setminus \S,{\cal F})$$ 
   from $\hat{g}_r$ we obtain a Riemannian metric $\hat{g}_M$ on $M\setminus \S$ such that the restriction of $\hat{\pi}_r$ to a leaf $\hat{L}$ of $\hat{\cal F}_{| \hat{M}_r(\S)\setminus \hat{\S}}$ on a leaf  $L$ of ${\cal F}_{| M\setminus \S}$ is an isometry. Note that $\hat{g}_M=g$ on $M\setminus \rm{Tub}_r(\S)$ from Point (b) and $\hat{g}_M$ is exactly the Riemannian metric on  $\rm{Tub}_r(\S)\setminus \S$ build in \cite{AnSk} in Proposition 3.2. Now the proof of Lemma 3.5 in \cite{AnSk} implies that $\hat{g}_M$ can be smoothly prolonged on $\S$ by the initial metric $g$. Now the result for the leaves of ${\cal F}_{| \hat{\S}}$ comes from   Point (d) of Theorem \ref{desing}.

  \end{proof}

  \subsection{Finsler entropy of singular Riemannian foliation (Proof of Theorem \ref{The2})}${}$\\
  Let $\cal F$ be a singular Riemannian foliation $\cal F$ on a compact Riemannian foliation $(M,g)$.   We equip the smooth distribution $D$ defined by $\cal F$ of the Riemannian metric $g_D$ induced by $g$. Then we can consider the  geometric entropy  $h({\cal F},M,g)$ of $\cal F$ defined in Definition \ref{entropycalF}. We will show that  $h({\cal F},M,g)=0$   
   \begin{proof}[Proof of Theorem \ref{The2}]${}$\\
   At first from Observation \ref{propentrop} Point {\bf 2}, without loss of generality we may choose any Riemannian metric on $M$ to prove the result.  
   From Corollary \ref{propF}, and Theorem \ref{pullbackentropy} we have
 \begin{eqnarray}\label{hblowup}
h(\hat{\cal F}, \hat{M}_r,\{\hat{g}_r\}_{|\hat{\cal F}_r})\geq h({\cal F},M,\{ {g}_M\}_{| {\cal F}})\geq 0
\end{eqnarray}

   where $\{\hat{g}_r\}_{|\hat{\cal F}_r}$ and $\{ \hat{g}_M\}_{| {\cal F}}$ are the induced Riemannian metric on the distributions generated by $\hat{\cal F}_r$ and ${\cal F}$ respectively.
   Now  each singular Riemannian foliation of a compact manifold has a stratification   $\{\S_k\}_{k=1,\dots,d}$, such that each $\S_k$ is the union of leaves of same dimension ({\it cf.} section \ref{srf} or more precisely see \cite{Mo}). 
   It is well known that the geometric entropy of $\cal F$ is zero (see \cite{GLW} or \cite{Wa}). Since the geometric entropy is nothing but else $h({\cal F},M,g_{\cal F})$ as defined in Definition \ref{entropycalF}, the result is true if $\cal F$ is regular
  Assume that the smallest  dimension of a leaf is  $m$. Then, if $\cal F$ is singular, the bowing-up $\pi_r:\hat{M}_r\ap M$ produces a singular Riemannian foliation ${\cal F}_r$ whose smallest dimension  of a leaf is $m_r>m$.  Therefore after a finite sequence of blow-up we obtain a regular Riemannian foliation on a compact Riemannian manifold. According the the relation between the  Finsler entropy of a singular Riemannian foliation and the Finsler entropy of the singular Riemannian foliation obtained by blowing up, according to Observation \ref{propentrop} Point {\bf 2}, it follows that that $h({\cal F},M,\{ {g}_M\}_{| {\cal F}})=0$

   \end{proof}

\end{document}